\documentclass[12pt, a4]{amsart}
\usepackage{amsgen,amsmath,amstext,amsbsy,amsopn,amsfonts,amssymb}
\usepackage{amsmath,amssymb,amsfonts,amsthm,enumerate}
\usepackage{mathrsfs}
\usepackage{amsthm}
\usepackage[alphabetic]{amsrefs}

\newtheorem{theorem}{Theorem}[section]
\newtheorem{lemma}[theorem]{Lemma}
\newtheorem{corollary}[theorem]{Corollary}
\newtheorem{proposition}[theorem]{Proposition}
\newtheorem{obs}[theorem]{Observation} \newtheorem{defi}[theorem]{Definition}

\newenvironment{definition}{\begin{defi}\rm}{\end{defi}}
\newtheorem{exa}[theorem]{Example}

\newtheorem{rem}[theorem]{Remark}
\newenvironment{remark}{\begin{rem}\rm}{\end{rem}}
\newtheorem{rems}[theorem]{Remarks}

\newtheorem{ack}[theorem]{Acknowlegment}

\def\n{\noindent}
\def\H{\mathcal H}
\def\L{\mathcal L}
\def\K{\mathcal K}
\def\P{\mathcal P}
\def\M{\mathcal M}

\def\NN{{\mathbf N}}
\def\ZZ{{\mathbf Z}}
\def\CC{{\mathbf C}}
\def\RRR{{\mathbf R}}
\def\QQ{\mathbf Q}

\def\AA{{\mathbf A}}
\def\RR+{{\mathbf R}^*}
\def\TT{\mathbf T}

\def\LL{\mathbf L}
\def\HH{\mathbf H}
\def\kk{\mathbf k}
\def\GG{\mathbf G}

\def\Un{\mathbf 1}

\def\Q_p{{\mathbf Q}_p}
\def\SS{\mathbf S}

\def\UU{\mathbf U}

\def\eps{\varepsilon}
\def\Ga{\Gamma}
\def\ga{\gamma}
\def\g{\gamma}
\def\La{\Lambda}
\def\la{\lambda}
\def\vfi{\varphi}

\def\tous{\qquad\text{for all}\quad}

\def\Aut{{\mathrm Aut}}
\def\Aff{{\mathop{\rm Aff}}}

\def\Ind{{\mathrm Ind}}

\def\Ad{{\mathrm Ad}}

\def\Tr{{\mathrm Tr}}
\def\Char{{\mathrm Char}}

\def\Prob{{\mathrm Prob}}

\def\tout{\quad\text{for all}\quad}
\def\et{\qquad\text{and}\qquad}
\def\Un{\mathbf 1}
\begin{document}

\date{\today}
\title[Characters of algebraic groups]{Characters of algebraic groups over number fields}

\address{Bachir Bekka \\ Univ Rennes \\ CNRS, IRMAR--UMR 6625\\
Campus Beaulieu\\ F-35042  Rennes Cedex\\
 France}
\email{bachir.bekka@univ-rennes1.fr}
\author{Bachir Bekka and Camille Francini}
\address{Camille Francini \\ Univ Rennes \\ CNRS, IRMAR--UMR 6625\\
Campus Beaulieu\\ F-35042  Rennes Cedex\\
 France}
\email{camille.francini@ens-rennes.fr}

\thanks{The authors acknowledge the support  by the ANR (French Agence Nationale de la Recherche)
through the projects Labex Lebesgue (ANR-11-LABX-0020-01) and GAMME (ANR-14-CE25-0004)}
\begin{abstract}
Let  $k$ be a number field, $\GG$ an algebraic group 
defined over $k$, and $\GG(k)$ the group of  $k$-rational points  in $\GG.$
We determine  the set  of functions on $\GG(k)$ which are of positive type and conjugation invariant,  under the assumption that $\GG(k)$ is generated by its unipotent elements.
An essential step in the proof is the classification of the $\GG(k)$-invariant 
ergodic probability measures on an adelic solenoid naturally associated to $\GG(k);$
this last result is deduced from  Ratner's measure rigidity theorem for  
 homogeneous spaces of $S$-adic Lie groups.
 \end{abstract}

\maketitle
\section{Introduction}
\label{S0}

Let  $k$ be a field and $\GG$ an algebraic group  defined over $k$. When $k$ is a local field (that is,  a non discrete locally compact  field),  the group $G=\GG(k)$ of  $k$-rational points  in $\GG$ is a locally compact group for the topology induced by $k$.
In this case (and when moreover $k$ is of characteristic zero),
much is known (\cite{Harish}, \cite{Duflo}) about the unitary dual $\widehat{G}$ of $G$, the set of equivalence classes of irreducible unitary representations of $G$ in Hilbert spaces. 
By way of contrast,  if $k$ is a global field  (that is, either  a number field or a function field in one variable over a finite field), then $G$  is a countable infinite group 
and, unless $G$ is abelian,   the  classification of  $\widehat{G}$ is a hopeless task, as follows from work of Glimm and Thoma (\cite{Glimm},\cite{Thoma1}). 
In this case, a sensible substitute for  $\widehat{G}$ is the set of characters of $G$ we are going to define.

Let $G$ be  a  group. Recall that a function $\vfi:G\to \CC$
is of positive type if the complex-valued matrix $(\vfi(g_j^{-1} g_i))_{1\leq i,j\leq n}$ is positive semi-definite for any $g_1, \dots, g_n$ in $G.$

A function of positive type $\vfi$ on $G$ which is central (that is, constant
on conjugacy classes) and normalized (that is, $\vfi(e)=1$) will be called a \textbf{trace}
on $G$. 
The set $\Tr(G)$ of traces on $G$ is  a 
convex subset of the unit ball of $\ell^\infty (G)$ which is 
compact in the topology of pointwise convergence.
The extreme points $\Tr(G)$  are called the \textbf{characters} of $G$ and the set 
they constitute will be denoted by $\Char(G).$

Besides providing an alternative dual space of a group $G,$ characters and traces appear in various situations. Traces of $G$ are  tightly connected to representations of  $G$ in  the unitary group of  tracial von Neumann algebras (see below and Subsection~\ref{TracesAndVNalgebras}). 
The space $\Tr(G)$ of traces on $G$ encompasses the  lattice
of all normal subgroups of $G,$ since the characteristic function of every normal
subgroup is a trace on $G.$
More generally, every measure preserving action of $G$ on 
a probability space gives rise to an invariant random subgroup (IRS) on $G$ and therefore to a  trace on $G$ (see \cite[\S 9]{Gelander}).

The study of characters on infinite discrete groups was initiated by Thoma (\cite{Thoma1},\cite{Thoma2}) and the space $\Char(G)$ was determined for various  groups
$G$ (see \cite{Thoma3}, \cite{Kirillov}, \cite{Ovcinnikov},
 \cite{Skudlarek}, \cite{CareyEtAl1}, \cite{Bekka-Inv}, \cite{Dudko-Medynets}, \cite{Peterson},  \cite{Peterson-Thom}, \cite{Boutonnet-Houdayer}).

Observe that  our traces are often called characters in the literature
(see for instance \cite{Dudko-Medynets}, \cite{Peterson-Thom}).

\vskip.2cm
Let $k$ be a number field (that is, a finite extension of $\QQ$)
and  $\GG$ a connected linear algebraic group defined over $k.$
In this paper, we will give a complete description 
of $\Char(G)$ for $G=\GG(k)$, under the assumption that 
$G$ is generated by its unipotent one-parameter subgroups.
A  \textbf{unipotent one-parameter subgroup} of   $G$  is a subgroup  of the form 
$\{u(t)\mid t\in k\},$ where 
$u: \GG_a\to \GG$ is a nontrivial $k$-rational homomorphism from the
additive group $\GG_a$ of dimension 1 to $\GG.$

The case  where $\GG$ is quasi-simple over $k$ was treated in \cite{Bekka}
and the result is that 
$$
\Char(G)=\{\widetilde{\chi}\mid \chi \in \widehat{Z}\} \cup \{1_G\}
$$
where $Z$ is the (finite) center of $G$ and 
$\widetilde{\chi}:G\to \CC$ is defined by $\widetilde{\chi}=\chi$ on $Z$
and $\widetilde{\chi}=0$ on $G\setminus Z.$
When $\GG$ is semi-simple,  the computation
$\Char(G)$ can easily be reduced to  the quasi-simple case
(see \cite[Proposition 5.1]{Bekka}; see also Corollary~\ref{Cor-CharProduct} below).
\vskip.2cm

We now turn to a general  connected linear algebraic group $\GG$ over $k.$
The  unipotent radical  $\UU$ of $\GG$ is defined over $k$ and 
there exists a connected reductive $k$-subgroup $\LL$, called a Levi subgroup,  
 such that  $\GG=\LL \UU$ (see \cite{Mostow}).
 Set $U:= \UU\cap G$ and $L:=\LL\cap G$. Then, we have a corresponding 
semi-direct  decomposition  $G=LU$, called
 the \textbf{Levi decomposition}\,  of $G$ (see \cite[Lemma 2.2]{Lipsman}).
 
  Recall that $\LL=\TT \LL'$ is an almost direct product (see Subsection~\ref{SS: ComputationSS} for this notion)
of a central $k$-torus $\TT$ and the  derived subgroup $\LL'$,
which is a semi-simple $k$-group  Assume that $G$ is generated by its unipotent one-parameter subgroups.
Then the same holds for $L$. Since every unipotent one-parameter subgroup
of $\LL$ is contained in $ \LL',$ it follows that $G=  \LL'(k) U,$
that is, the Levi subgroup $\LL$ is semi-simple.

 We will describe  $\Char(G)$ in terms of data attached to $L$ and 
 the  action of $L$ on the Lie algebra  $\mathrm{Lie}(\UU)$  of $U.$

 \vskip.2cm
 The set $\mathfrak{u}$  of $k$-points of  $\mathrm{Lie}(\UU)$  is a Lie algebra over $k$ and the exponential map $\exp: \mathfrak{u}\to U$ is a bijective map.
 For every  $g$ in $G,$ the automorphism of $U$ given by conjugation with 
$g$ induces an automorphism $\Ad (g)$ of the Lie algebra $\mathfrak{u}$
(see Subsection~\ref{SS:TracesNipotentGroups}).

Let   $\widehat{\mathfrak{u}}$ be the Pontrjagin dual of  $\mathfrak{u}$, that is, 
 the group of unitary character of the  additive group of $\mathfrak{u}$.
  We associate to every $\lambda\in \widehat{\mathfrak{u}}$ the following subsets 
$\mathfrak{k}_\lambda, \mathfrak{p}_\lambda$ of $\mathfrak{u}$ and  $L_\la$ of $L$:
\begin{itemize}
\item 
$\mathfrak{k}_\lambda$ is the set of elements $X\in \mathfrak{u}$ such that 
$$
\lambda(\Ad(g)(tX))=1 \tout g\in G, t\in k;
$$
\item $\mathfrak{p}_\lambda$ is the set of elements $X\in \mathfrak{u}$ such that 
$$
\lambda(\Ad(g)(tX))=\lambda(tX) \tout g\in G, t\in k;
$$
\item $L_\la$ is the set  of $g\in L$ such that $\Ad(g)(X)\in X+\mathfrak{k}_\lambda$
for every $X\in  \mathfrak{u}.$
\end{itemize}
Then $\mathfrak{k}_\lambda$ and $\mathfrak{p}_\lambda$ are 
$L$-invariant ideals  of  $\mathfrak{u}$ and $L_\la$ is the kernel of the quotient representation of $L$ on $\mathfrak{u}/\mathfrak{k}_\lambda$. 
 
The sets $K_\la:=\exp(\mathfrak{k}_\lambda)$ and $P_\la:=\exp(\mathfrak{p}_\lambda)$ are $L$-invariant  normal  subgroups of $U$.
Moreover (see Proposition~\ref{Pro-KCenter}),
$P_\lambda$ is the inverse image under 
the canonical projection $U\to U/K_\lambda$
of the elements in $U/K_\lambda$ contained in the center of $G/K_\lambda$
and  
$$\chi_\la: P_\la\to \SS^1, \qquad \exp(X)\to \lambda(X)$$
 is a $G$-invariant unitary character of $P_\la$, which is trivial
 on $K_\la$. 

Let  $\Ad^*$ denote the coadjoint action (that is, the dual action) of $G$ on $\widehat{\mathfrak{u}}.$ 
 We say that  $\la_1, \la_2\in \widehat{\mathfrak{u}}$ have the same \textbf{quasi-orbit} under $G$ if  the  closures  of $\Ad^*(G) \la_1$ and $\Ad^*(G) \la_2$   in the compact group $\widehat{\mathfrak{u}}$  coincide.
 
 We can now state our main result.

 \begin{theorem}
 \label{Theo-GenAlgGroup}
 Let $G=\GG(k)$ be the group of  $k$-rational points of a connected linear algebraic group $\GG$ over a number field $k$. Assume that $G$ is generated by its unipotent one-parameter subgroups and let $G=LU$ be a Levi decomposition of $G$.
For $ \la\in \widehat{\mathfrak{u}}$ and  $\vfi\in \Char(L_\la),$ define $\Phi_{(\la, \vfi)}: G\to \CC$ by  
$$\Phi_{(\la, \vfi)} (g)=\begin{cases}
\vfi(g_1)\chi_\la(u)&\text{if } g=g_1 u \ \text{for}\  g_1\in L_\la,  u\in P_\lambda \\
0&\text{otherwise}.
\end{cases}
$$
\begin{itemize}
\item[(i)] We have
$$\Char(G)=\left\{\Phi_{(\la, \vfi)} \mid\la\in \widehat{\mathfrak{u}}, \vfi\in \Char(L_\la)\right\}.$$
\item[(ii)] Let $\la_1, \la_2\in \widehat{\mathfrak{u}}$ and $ \vfi_1\in \Char(L_{\la_1}),
 \vfi_2\in \Char(L_{\la_2}).$ Then $\Phi_{(\la_1, \vfi_1)}= \Phi_{(\la_2, \vfi_2)}$
 if and only if $\la_1$ and $\la_2$ have the same quasi-orbit under the coadjoint action $\Ad^*$ and $\vfi_1=\vfi_2.$
 \end{itemize}
 \end{theorem}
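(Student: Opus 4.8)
\textbf{Proof strategy for Theorem~\ref{Theo-GenAlgGroup}.}
The plan is to reduce the classification of $\Char(G)$ for the general Levi decomposition $G=LU$ to two ingredients already isolated in the introduction: the disintegration of a trace over the abelian normal subgroup $U$ (via its Pontrjagin dual $\widehat{\mathfrak u}$, using the exponential bijection $\exp\colon\mathfrak u\to U$), and the quasi-simple/semi-simple case for $L$ from \cite{Bekka}. First I would take an arbitrary $\vfi\in\Char(G)$ and restrict it to $U$. Since $\vfi|_U$ is a central (under the $G$-action, hence $\Ad^*(G)$-invariant) trace on the abelian group $U\cong\mathfrak u$, it is the barycenter of a $G$-invariant probability measure $\mu$ on $\widehat{\mathfrak u}$; extremality of $\vfi$ should force $\mu$ to be supported on a single quasi-orbit, i.e.\ $\mu$ is $\Ad^*(G)$-ergodic and $\mathrm{supp}(\mu)=\overline{\Ad^*(G)\la}$ for some $\la\in\widehat{\mathfrak u}$. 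This is exactly the point where the classification of invariant ergodic measures on the adelic solenoid (the closure of $\Ad^*(G)\la$ in $\widehat{\mathfrak u}$), deduced from Ratner's theorem as advertised in the abstract, must be invoked: it tells us such a $\mu$ is concentrated on the orbit of a character whose stabilizer contains $P_\la$, so after a standard positivity/Cauchy--Schwarz argument $\vfi$ must vanish off the ``support subgroup'' $L_\la P_\la$ and on $P_\la$ it agrees with $\chi_\la$.

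Next, having localized the support, I would analyze $\vfi$ on $L_\la P_\la$. The key structural facts supplied by Proposition~\ref{Pro-KCenter} are that $K_\la=\exp(\mathfrak k_\la)\subseteq\ker\vfi$, that $P_\la/K_\la$ is central in $G/K_\la$, and that $L_\la$ is precisely the kernel of the $L$-action on $\mathfrak u/\mathfrak k_\la$; consequently $L_\la$ commutes with $U$ modulo $K_\la$, so $L_\la P_\la/K_\la$ is a central extension of $L_\la$ by the central subgroup $P_\la/K_\la$ on which $\vfi$ is the character $\chi_\la$. A trace on such a group that restricts to a fixed character on the central factor is determined by a trace on the quotient $L_\la$, and conversely; this gives the factorization $\vfi=\Phi_{(\la,\vfi_1)}$ with $\vfi_1\in\Char(L_\la)$ once one checks that extremality of $\vfi$ on $G$ corresponds to extremality of $\vfi_1$ on $L_\la$ (a disintegration argument over $\widehat{P_\la/K_\la}$, but only the fixed character $\chi_\la$ occurs). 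For the converse inclusion one takes $\la$ and $\vfi_1\in\Char(L_\la)$, checks directly that $\Phi_{(\la,\vfi_1)}$ is well defined (using that the decomposition $g=g_1u$ with $g_1\in L_\la$, $u\in P_\la$ is unique up to $L_\la\cap P_\la$, which is trivial since $P_\la\subseteq U$ and $L_\la\subseteq L$), is of positive type (realize it as a diagonal matrix coefficient of $\Ind_{L_\la P_\la}^G(\vfi_1\otimes\chi_\la)$ or via a GNS/induction computation), is central, and is extremal — the last point again reducing, by the support analysis above, to extremality of $\vfi_1$.

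For part (ii), the ``if'' direction is a computation: if $\la_1,\la_2$ have the same quasi-orbit then $\overline{\Ad^*(G)\la_1}=\overline{\Ad^*(G)\la_2}$, which forces $\mathfrak k_{\la_1}=\mathfrak k_{\la_2}$, $\mathfrak p_{\la_1}=\mathfrak p_{\la_2}$, $L_{\la_1}=L_{\la_2}$, and $\chi_{\la_1}=\chi_{\la_2}$ on the common $P_\la$ (characters that agree on a dense subset of a compact dual agree), whence $\Phi_{(\la_1,\vfi_1)}=\Phi_{(\la_2,\vfi_2)}$ as soon as $\vfi_1=\vfi_2$. Conversely, if the two functions coincide, then comparing their restrictions to $U$ recovers the quasi-orbit of $\la_i$ (the restriction is the barycenter of the unique ergodic measure on $\overline{\Ad^*(G)\la_i}$, and distinct quasi-orbits give distinct barycenters by the uniqueness in the measure classification), so the quasi-orbits agree and hence $L_{\la_1}=L_{\la_2}$, $P_{\la_1}=P_{\la_2}$; then restricting to $L_{\la_1}=L_{\la_2}$ gives $\vfi_1=\vfi_2$.

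I expect the main obstacle to be the first paragraph: extracting from Ratner's measure rigidity the precise statement that every $\Ad^*(G)$-invariant ergodic probability measure on the relevant adelic solenoid is the pushforward of Haar measure on a coset of a closed subgroup whose stabilizer structure matches $P_\la$, and then converting that measure-theoretic fact into the support constraint $\mathrm{supp}(\vfi)\subseteq L_\la P_\la$ and the identity $\vfi|_{P_\la}=\chi_\la$ via an operator-algebraic positivity argument (essentially showing that the GNS representation of $\vfi$ restricted to $U$ has spectral measure concentrated on a single character orbit, forcing the off-diagonal matrix coefficients to vanish). Everything after that — the central-extension bookkeeping on $L_\la P_\la$ and the two bijectivity checks in (ii) — should be routine given \cite{Bekka} and Proposition~\ref{Pro-KCenter}.
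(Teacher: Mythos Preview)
Your overall architecture is right and matches the paper: restrict $\psi\in\Char(G)$ to $U$, identify $\psi|_U\circ\exp$ as an element of $\Char(\mathfrak u,G)$ via Proposition~\ref{Pro-TracesNilpotent} and Theorem~\ref{Prop-GNS-InnerAut}, classify those via the ergodic measure classification on $\widehat{\mathfrak u}\cong\AA^d/\QQ^d$ (Ratner), then analyse $\psi$ on the remaining support and factor it through $L_\la$. Part~(ii) goes exactly as you say.

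The genuine gap is in how you get vanishing off $L_\la P_\la$. You describe this as ``a standard positivity/Cauchy--Schwarz argument'' or as ``spectral measure concentrated on a single character orbit, forcing the off-diagonal matrix coefficients to vanish.'' That mechanism establishes $\psi|_U=0$ off $P_\la$ (this is the content of the first step, via Theorem~\ref{Theo-ErgodicityAdelic-Dual}), but it does \emph{not} by itself force $\psi(g)=0$ for $g\in G\setminus G_\la$, where $G_\la=\{g:\Ad(g)X-X\in\mathfrak k_\la\text{ for all }X\}$. The spectral measure of $\psi|_U$ is $G$-invariant, so conjugation by any $g\in G$ fixes it; there is no disjointness of supports to exploit. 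The paper's actual argument is a commutator trick: for $g\notin G_\la$ and suitable $X$, one studies the set of $t\in\QQ$ for which $\exp(-tX)\exp(\Ad(g)(tX))$ lies in $K_\psi$ (respectively $P_\psi$). These are additive subgroups of $\QQ$ (Lemma~\ref{Lem-CommutarorAndSubgroup}), and the crucial step is showing they are \emph{proper}: by Campbell--Hausdorff the commutator is $\exp(\sum_k t^kY_k)$ with $Y_1=\Ad(g)X-X$, and one needs that $\{\sum_k t^k\langle a,Y_k\rangle:t\in\QQ\}$ is dense in $\AA/\QQ$ unless all $\langle a,Y_k\rangle\in\QQ$. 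This is a nontrivial number-theoretic density fact (Corwin--Pfeffer or Bergelson) that your proposal does not anticipate. Once the subgroup is proper (hence of infinite index in $\QQ$), Lemma~\ref{Lem-Hilbert} produces an infinite orthonormal family and forces $\psi(g)=0$.

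A second, smaller divergence: the factorisation $\psi(gu)=\vfi_1(g)\chi_\la(u)$ on $L_\la P_\la$ is obtained in the paper not from a central-extension argument on $L_\la P_\la/K_\la$ but from Thoma's product theorem (Theorem~\ref{Prop-CharProductGroup}/Corollary~\ref{Cor-CharProduct}) applied to the almost direct product decomposition $G=L_\la\cdot(HU)$ with $H$ a complement to $L_\la$ in $L$ centralising $L_\la$ (Proposition~\ref{Prop-CharProduct2}). Your central-extension route may well work, but the paper's route also delivers extremality of $\vfi_1=\psi|_{L_\la}$ cleanly. For the converse direction (that each $\Phi_{(\la,\vfi)}$ is extremal), the paper does not verify extremality directly; it writes $\Phi_{(\la,\vfi)}$ as a Choquet integral of characters and then uses the already-proved forward direction to see that almost every integrand must equal $\Phi_{(\la,\vfi)}$.
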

 
 A few words about the proof of Theorem~\ref{Theo-GenAlgGroup}  are in order.
 The essential step  consists in the analysis of the restriction $\vfi|_U$ to $U$ of a given character $\vfi\in \Char(G).$
 A first crucial fact is that $\psi=\vfi|_U\circ \exp$ is a $G$-invariant function of positive type on  $\mathfrak{u}$ (for the underlying  abelian group structure) and
 is extremal under such  functions (see Proposition~\ref{Pro-TracesNilpotent} and Theorem~\ref{Prop-GNS-InnerAut}); the Fourier transform
 of $\psi$ is a $G$-invariant ergodic probability measure on $\widehat{\mathfrak{u}},$
 which is an adelic solenoid.
We classify  all such measures  (Theorem~\ref{Theo-ErgodicityAdelic}); 
for this,   we use Ratner's measure rigidity results
  for   homogeneous spaces of $S$-adic Lie groups (see \cite{Ratner2}, \cite{Margulis-Tomanov}). A corresponding description,  based  on Ratner's topological rigidity results, is given for the $G$-orbit closures in $\widehat{\mathfrak{u}}$ (Theorem~\ref{Theo-OrbitClosure-Adelic}).
 
 \begin{remark}
\label{Rem-Theo1}
(i) For every $\la\in  \widehat{\mathfrak{u}}$, the group $L_\la$ as defined above is the set of $k$-points of a normal 
subgroup $\LL_\la$ of $\LL$ defined over $k$; indeed, $\LL_\la$ 
is the  the kernel of
the $k$-rational representation of $\LL$ on the $k$-vector space
$\mathfrak{u}/\mathfrak{k}_\la.$ (Observe that $\LL_\la$ may be non connected.)
The set $\Char(L_\la)$ can easily be  described by the results in \cite{Bekka}  mentioned above (see   Proposition~\ref{Prop-CharProduct2} below).

\n
(ii) Theorem~\ref{Theo-GenAlgGroup}
allows a full classification of $\Char(G)$ for any 
group $G$ as above, through the following procedure:
\begin{itemize}
\item determine the $L$-invariant ideals of $\mathfrak{u};$
\item fix an  $L$-invariant ideal  $\mathfrak{k}$ of $\mathfrak{u};$  determine 
 the space $\overline{\mathfrak{p}}$ of $L$-fixed  elements in the center of $\mathfrak{u}/\mathfrak{k}$ and let
$ \mathfrak{p}$ be its the inverse image in $\mathfrak{u}$;
\item determine the subgroup $L(\mathfrak{k}, \mathfrak{p})$ of $L$
of all elements which act trivially on $\mathfrak{p}/\mathfrak{k};$
determine $\Char(L(\mathfrak{k}, \mathfrak{p}));$
\item let  $\la\in  \widehat{\mathfrak{u}}$ with  
$\mathfrak{k}_\lambda=\mathfrak{k}$; then $\mathfrak{p}_\la=\mathfrak{p}$
and for $\vfi\in \Char (L(\mathfrak{k}, \mathfrak{p}))$, write  $\Phi_{(\la, \vfi)}\in\Char(G).$ 
\end{itemize}
See Section~\ref{S:Examples} for some examples.

\n
(iii) The assumption that $G$ is generated by its unipotent one-parameter subgroups
is equivalent to the assumption that the Levi component $L$
of $G$ is semi-simple and that $L^+=L$, where $L^+$ is the subgroup of $L$ defined as in
\cite[\S 6]{Borel-Tits2}. A necessary condition for 
the equality $L^+=L$ to hold is that every non-trivial simple  
 algebraic normal subgroup of $\LL$ is $k$-isotropic (that is,  $k-\rm{rank}(\LL) \geq 1$).
 It is known that $L^+=L$ when $\LL$ is simply-connected  and split or quasi-split  over $k$ (see \cite[Lemma 64]{Steinberg}).
 
 \n
 (iv)  A general result about $\Char(G)$ cannot be expected  when the condition
 $L=L^+$ is dropped; indeed,  not even the normal subgroup structure of $\Char(L)$ is  known is general when $\LL$ is $k$-anisotopic (see \cite[Chap. 9]{Platonov-Rapinchuk})
 
 We do not know whether an appropriate version of Theorem~\ref{Theo-GenAlgGroup} 
 is valid when $k$ is of positive characteristic (say, when  
 $k=F(X)$ for a finite field $F$).  A first obstacle
 to overcome is that a Levi subgroup   of $\GG$ does not necessarily exist;
 a second one is the less tight relationship between unipotent groups
 and their Lie algebras; finally,   Ratner's measure rigidity theorem is not known in full generality  (see \cite{Einsiedler-Ghosh} for a partial result).  
 
\n
(v) In the case where $\GG$ is unipotent, that is, $G=U,$ 
we obtain a ``Kirillov type" description of  $\Char(U)$:
the map  $\Phi: \widehat{\mathfrak{u}} \to \Char(U),$ defined by  $\Phi(\la) (u)=\chi_\la(u)$ for  $u\in P_\la$
and $\Phi(\la)(u)=0$ otherwise, factorizes to a bijection
between the space of  quasi-orbits in $\widehat{\mathfrak{u}}$  under $\Ad^*$ 
 and $\Char(U).$   
The description of $\Char(U)$ appears in  \cite[Theorem 4.2]{CareyEtAl1} 
and is also implicit in \cite[Proposition 2.7]{Pfeffer}.

\end{remark}

We now  rephrase Theorem~\ref{Theo-GenAlgGroup} in terms of  factor representations of $G$.
Recall that a \textbf{factor representation}  of a group $G$ is a unitary representation $\pi$ of $G$ on a Hilbert space $\H$ such the von Neumann subalgebra $\pi(G)''$ of $\L(\H)$ is a factor  (see  also Subsection~\ref{TracesAndVNalgebras}). Two  such representations $\pi_1$ and $\pi_2$ are said to be \textbf{quasi-equivalent} if  their exists an isomorphism $\Phi: \pi_1(G)''\to \pi_2(G)''$ such that 
$\Phi(\pi_1(g))= \pi_2(g)$ for every $g\in G.$
A factor representation
 $\pi$ of $G$  is said to be of \textbf{finite type} if 
$\pi(G)''$ is a finite factor,
that is, if $\pi(G)''$ admits a trace $\tau$; in this case,  $\tau\circ \pi$ belongs to $\Char(G)$ and the map  $\pi\mapsto \tau\circ \pi$ factorizes to a bijection between the quasi-equivalence classes of  factor representations  of finite type of $G$ and $\Char(G)$;
for all this, see \cite[Chap.6, Chap. 17]{DixmierC*}.

The following result follows immediately  from Theorem~\ref{Theo-GenAlgGroup},
in combination with Proposition~\ref{Pro-InducedTrace} and \cite[Corollary 6.8.10]{DixmierC*}.

Let $\Ga=L\ltimes N$  be a semi-direct product of a subgroup 
$L$ and an abelian normal subgroup $N$. Let $\sigma$ be a unitary representation of $L$ on a Hilbert space $\H$ 
and let $\chi\in \widehat{N}$ be such that
$\chi^g= \chi$ for every $g\in L.$ It is straightforward to check 
that $\chi \sigma$ defined by $\chi\sigma (g, n)= \chi(n) \sigma(g)$
for $(g,n)\in \Ga$ is a unitary representation of $\Ga$ on $\H$
\begin{theorem}
\label{Theo-GenAlgGroup-bis}
Let $G=LU$ be as in Theorem~\ref{Theo-GenAlgGroup}.
\begin{itemize}
\item[(i)] For every $ \la\in \widehat{\mathfrak{u}}$ and every  factor representation $\sigma$   of finite type of $L_\la,$ the representation $\pi_{(\la, \sigma)}:=\Ind_{L_\la P_\la}^G \chi_\la \sigma$ induced by $\chi_\la \sigma$ is a  factor representation  of finite type of $G$; moreover, every factor representation  of finite type of $G$ is quasi-equivalent
to a representation of the form $\pi_{(\la, \sigma)}$ as above.
\item[(ii)] Let $\la_1, \la_2\in \widehat{\mathfrak{u}}$ and $ \sigma_1,
 \sigma_2$  factor representations  of finite type of $L_{\la_1}, L_{\la_2},$ respectively.
 Then $ \pi_{(\la_1, \sigma_1)}$ and $ \pi_{(\la_2, \sigma_2)}$ are quasi-equivalent
 if and only if $\la_1$ and $\la_2$ have the same quasi-orbit under the coadjoint action $\Ad^*$ and  $\sigma_1$ and $\sigma_2$ are quasi-equivalent.
 \end{itemize}
 \end{theorem}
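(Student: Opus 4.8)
The plan is to deduce Theorem~\ref{Theo-GenAlgGroup-bis} from Theorem~\ref{Theo-GenAlgGroup} by means of the standard correspondence between characters and finite type factor representations. Recall from \cite[Chap.~6, Chap.~17]{DixmierC*} that, for any countable group $H$, the assignment $\pi\mapsto\tau_\pi\circ\pi$, with $\tau_\pi$ the canonical trace on the finite factor $\pi(H)''$, induces a bijection from the set of quasi-equivalence classes of finite type factor representations of $H$ onto $\Char(H)$; I write $\sigma_\vfi$ for a representative of the class corresponding to $\vfi\in\Char(H)$, that is, for the GNS representation of $\vfi$. Since $P_\la$ is normal in $U$ and $\mathfrak{p}_\la$ is $L$-invariant, $L_\la P_\la$ is the semidirect product $L_\la\ltimes P_\la$ with $P_\la$ abelian, and $\chi_\la$ is an $L_\la$-invariant unitary character of $P_\la$; hence, as explained in the paragraph preceding the statement, $\chi_\la\sigma$ is a genuine unitary representation of $L_\la P_\la$ for every unitary representation $\sigma$ of $L_\la$, so that $\pi_{(\la,\sigma)}:=\Ind_{L_\la P_\la}^{G}(\chi_\la\sigma)$ is well defined. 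The key input, which I would read off from Proposition~\ref{Pro-InducedTrace} together with \cite[Corollary~6.8.10]{DixmierC*}, is that, for $\sigma$ a finite type factor representation of $L_\la$ and $\vfi:=\tau_\sigma\circ\sigma\in\Char(L_\la)$, the representation $\pi_{(\la,\sigma)}$ is again a finite type factor representation of $G$ whose canonical trace is $\Phi_{(\la,\vfi)}$. This is consistent with Theorem~\ref{Theo-GenAlgGroup}(i), which guarantees $\Phi_{(\la,\vfi)}\in\Char(G)$, and it exhibits $\pi_{(\la,\sigma)}$ as the GNS representation of that character.

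Granting this, part (i) is immediate. On the one hand, every $\pi_{(\la,\sigma)}$ is a finite type factor representation of $G$ by the key input above. On the other hand, if $\pi$ is an arbitrary finite type factor representation of $G$, then $\tau_\pi\circ\pi\in\Char(G)$, which by Theorem~\ref{Theo-GenAlgGroup}(i) equals $\Phi_{(\la,\vfi)}$ for some $\la\in\widehat{\mathfrak{u}}$ and some $\vfi\in\Char(L_\la)$; since $\pi_{(\la,\sigma_\vfi)}$ has the same canonical trace $\Phi_{(\la,\vfi)}$, the correspondence above forces $\pi$ to be quasi-equivalent to $\pi_{(\la,\sigma_\vfi)}$, which is of the required form $\pi_{(\la,\sigma)}$.

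For part (ii), both $\pi_{(\la_1,\sigma_1)}$ and $\pi_{(\la_2,\sigma_2)}$ are finite type factor representations, so by the correspondence they are quasi-equivalent if and only if their canonical traces coincide; writing $\vfi_i:=\tau_{\sigma_i}\circ\sigma_i$, this amounts to $\Phi_{(\la_1,\vfi_1)}=\Phi_{(\la_2,\vfi_2)}$, which by Theorem~\ref{Theo-GenAlgGroup}(ii) holds precisely when $\la_1$ and $\la_2$ have the same quasi-orbit under $\Ad^*$ and $\vfi_1=\vfi_2$. It then remains to observe that the same quasi-orbit for $\la_1$ and $\la_2$ forces $L_{\la_1}=L_{\la_2}$: for each $X\in\mathfrak{u}$ the set of $\mu\in\widehat{\mathfrak{u}}$ satisfying $\mu(\Ad(g)(tX))=1$ for all $g\in G$ and $t\in k$ is closed and $\Ad^*(G)$-invariant, hence contains $\overline{\Ad^*(G)\mu}$ as soon as it contains $\mu$, so $\mathfrak{k}_\mu$ depends only on the closure of $\Ad^*(G)\mu$, while $L_\mu$ depends only on $\mathfrak{k}_\mu$. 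Thus $\vfi_1$ and $\vfi_2$ are traces on one and the same group, and by the correspondence for that group $\vfi_1=\vfi_2$ if and only if $\sigma_1$ and $\sigma_2$ are quasi-equivalent. This is the criterion of part (ii).

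Once Theorem~\ref{Theo-GenAlgGroup} is in hand the deduction is essentially formal, and the only ingredient that is not mere bookkeeping is the key input of the first paragraph, namely the compatibility of induction of representations with induction of traces: that $\Ind_{L_\la P_\la}^{G}(\chi_\la\sigma)$ is again a finite type factor representation and that its canonical trace is the one prescribed by Proposition~\ref{Pro-InducedTrace}. I expect this to be routine given Proposition~\ref{Pro-InducedTrace} and \cite[Corollary~6.8.10]{DixmierC*}, so I anticipate no genuine obstacle.
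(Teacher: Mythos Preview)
Your deduction is correct and follows precisely the route indicated in the paper: Theorem~\ref{Theo-GenAlgGroup-bis} is obtained from Theorem~\ref{Theo-GenAlgGroup} via the standard bijection between $\Char(H)$ and quasi-equivalence classes of finite type factor representations, together with Proposition~\ref{Pro-InducedTrace} and \cite[Corollary~6.8.10]{DixmierC*} to identify the GNS representation of $\Phi_{(\la,\vfi)}$ with $\Ind_{L_\la P_\la}^G(\chi_\la\sigma_\vfi)$. One small inaccuracy: $P_\la$ need not be abelian (only $P_\la/K_\la$ is, by Proposition~\ref{Pro-KCenter}), but this is harmless since all that is required for $\chi_\la\sigma$ to be a representation of $L_\la\ltimes P_\la$ is that $\chi_\la$ be an $L_\la$-invariant unitary character of $P_\la$, which holds.
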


This paper is organized as follows. 
 In Section~\ref{S:Prelimilaries}, we  establish 
with some detail general facts about  functions of positive type on a group $\Ga$ which are invariant under a group of automorphisms of $\Ga.$ 
Section~\ref{S:CharUnip} deals with the crucial relationship 
(Proposition~\ref{Pro-TracesNilpotent}) between
 traces on an unipotent algebraic groups and invariant traces 
 on the associated  Lie algebra. 
In Section~\ref{S:CharInvariant}, we show how 
the study of characters on an algebraic group over $\QQ$ 
leads to the study of invariant probability measures on adelic solenoids.
Such measures as well as orbits closures are  classified
in Section~\ref{S:InvProba}. The proof of Theorem~\ref{Theo-GenAlgGroup}
is completed in Section~\ref{S:GenAlgGroup}.
In Section~\ref{S:Examples}, we compute  $\Char(G)$ for a few specific
examples of algebraic groups $G.$

\section{Invariant traces and von Neumann algebras}
\label{S:Prelimilaries}

We consider  functions of positive type on a group $\Ga$ which are invariant under
a group $G$ of automorphisms of $\Ga$, which may be larger than the group
of inner automorphisms of $\Ga.$
A systematic treatment of such functions is missing in the literature,
although  they  have already been considered in \cite{Thoma1} and \cite{Thoma2}.
As they play an important r\^ole throughout this article,  we establish with some detail some general  facts about them; in particular,  we give new (and, as we hope, more transparent) proofs for two crucial and non obvious properties of these functions (Theorems~\ref{Prop-GNS-InnerAut} and \ref{Prop-CharProductGroup}), based on the consideration of  the associated von Neumann algebras.
\subsection{Some general facts on invariant traces}
\label{SS:GeneralitiesTraces}

Let $\Gamma, G$ be  discrete groups and assume that $G$ acts by automorphisms on $\Ga.$ 
\begin{definition}  
\label{Def-Trace}
(i) A function $\varphi \colon \Ga \to \CC$ is called a
\textbf{$G$-invariant trace} on $\Ga$ if 
\begin{itemize}
\item $\varphi$ is of positive type, that is, for all $\la_1,\dots,\la_n \in \CC$ and all $\g_1,\dots,\g_n \in \Ga$, we have
$$\sum_{i,j=1}^n \lambda_i \bar \lambda_j \varphi(\g_j^{-1} \g_i) \geq 0,$$
\item $\vfi(g(\g))= \varphi(\g)$ for all $\gamma\in \Gamma$ and $g\in G$, and
\item $\vfi$ is normalized, that is, $\vfi(e)=1.$
\end{itemize}
We denote by $\Tr(\Ga, G)$  the set of $G$-invariant traces on $\Ga.$
In the case where $G=\Ga$ and $\Ga$ acts on itself by conjugation,
we write $\Tr(\Ga)$ instead of $\Tr(\Ga,\Ga).$

\noindent
(ii) The set $\Tr(\Ga, G)$ is  a compact convex set in the unit ball of $\ell^{\infty}(\Ga)$ endowed with the weak* topology. Let $\Char(\Ga, G)$ be the set of extremal
points in $\Tr(\Ga,G).$

In case $G=\Ga,$ we write as above, $\Char(\Ga)$ instead of $\Char(\Ga,\Ga)$.

\noindent 
(iii) Functions $\vfi \in\Char(\Ga, G)$ will be called
\textbf{$G$-invariant characters} on $\Ga$ and are characterized by the following property:
if $\psi$ is a $G$-invariant function of positive type on $\Ga$ which is dominated by $\vfi$
(that is, $\vfi-\psi$ is a  function of positive type), then $\psi=\la \vfi$ for some $\la>0$.

\end{definition}

\begin{remark}
\label{Rem-Choquet}
Assume that $\Ga$ is countable. Then $\Tr(\Ga, G)$ is metri\-za\-ble 
and  $\Char(\Ga, G)$ is a Borel subset of  $\Tr(\Ga, G).$
By Choquet's theory, every $\vfi\in \Tr(\Ga, G)$ can be written
as integral 
$$\vfi=\int_{\Tr(\Ga, G)} \psi d\mu_\vfi(\psi)$$ for a  probability 
measure $\mu_\vfi$ on  $\Tr(\Ga, G)$
with $\mu_\vfi(\Char(\Ga, G))=1.$ 
When $G$ contains the group
 of inner automorphisms of $\Ga,$ the measure 
 $\mu_\vfi$ is unique, as $\Tr(\Ga, G)$  is a Choquet simplex in this case (\cite{Thoma1}).
\end{remark}

The proof of the following proposition is straightforward.
Observe that, if $N$ is a $G$-invariant normal subgroup of $\Ga,$
then $G$ acts by automorphisms on the quotient group $\Ga/N.$
\begin{proposition}
\label{Pro-NormalSubgroup}
Let $N$ be a $G$-invariant normal subgroup of $\Ga$  
and let $p:\Ga\to \Ga/N$ be the canonical projection.
\begin{itemize}
\item[(i)] 
For every $\vfi \in \Tr(\Ga/N, G),$ we have $\vfi\circ p\in \Tr(\Ga,G).$
\item[(ii)] The image of the map 
$$ \Tr(\Ga/N, \Ga)\to \Tr(\Ga, G), \qquad \vfi \mapsto \vfi\circ p$$
is $\{\psi\in \Tr(\Ga, G)\mid \psi|_N=1_N\}.$
\item[(iv)] We have 
$\vfi \in \Char(\Ga/N, G)$ if and only  $\vfi\circ p\in \Char(\Ga, G).$
\end{itemize}
\end{proposition}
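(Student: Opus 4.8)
The plan is to deduce all three assertions at once from the properties of the pull-back map
$$
p^* \colon \varphi \longmapsto \varphi \circ p ,
$$
the only point requiring a small argument being the elementary fact that a normalized function $\psi$ of positive type on $\Ga$ satisfies $\psi|_N=1_N$ if and only if it factors through $p$. Since $N$ is $G$-invariant, $G$ acts by automorphisms on $\Ga/N$ and $p$ is $G$-equivariant, so $p^*$ sends $G$-invariant functions on $\Ga/N$ to $G$-invariant functions on $\Ga$; it is obviously affine and, $p$ being onto, injective.

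For (i): given a $G$-invariant trace $\varphi$ on $\Ga/N$, positive-definiteness of $\varphi\circ p$ is immediate, since for $\ga_1,\dots,\ga_n\in\Ga$ and scalars $\la_i$ one has $\sum_{i,j}\la_i\bar\la_j\,\varphi\big(p(\ga_j)^{-1}p(\ga_i)\big)\ge 0$, this being positive-definiteness of $\varphi$ evaluated at the points $p(\ga_1),\dots,p(\ga_n)$ of $\Ga/N$; $G$-invariance follows from $(\varphi\circ p)(g(\ga))=\varphi(g(p(\ga)))=\varphi(p(\ga))$, and $(\varphi\circ p)(e)=\varphi(e)=1$. Thus $\varphi\circ p\in\Tr(\Ga,G)$, and moreover $(\varphi\circ p)|_N\equiv\varphi(e)=1$, which already gives the inclusion ``$\subseteq$'' in (ii). For the reverse inclusion in (ii), let $\psi\in\Tr(\Ga,G)$ with $\psi|_N=1_N$, and let $(\pi_\psi,\H_\psi,\xi_\psi)$ be its GNS triple, so $\psi(\ga)=\langle\pi_\psi(\ga)\xi_\psi,\xi_\psi\rangle$. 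From $\|\pi_\psi(n)\xi_\psi-\xi_\psi\|^2=2-2\,\mathrm{Re}\,\psi(n)=0$ for $n\in N$ we obtain $\pi_\psi(n)\xi_\psi=\xi_\psi$ for all $n\in N$; hence $\psi(\ga n)=\langle\pi_\psi(\ga)\pi_\psi(n)\xi_\psi,\xi_\psi\rangle=\psi(\ga)$ and, using $\pi_\psi(n)^*=\pi_\psi(n^{-1})$, also $\psi(n\ga)=\psi(\ga)$. As $N$ is normal, $\psi$ is therefore constant on each coset $\ga N$ and descends to a function $\psi_0$ on $\Ga/N$ with $\psi=\psi_0\circ p$. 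Lifting points along $p$ exactly as in (i) shows that $\psi_0$ is of positive type, normalized and $G$-invariant, i.e. a $G$-invariant trace on $\Ga/N$, so $\psi=p^*\psi_0$ lies in the image.

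For (iv): by (i) and (ii) the map $p^*$ is an affine bijection of $\Tr(\Ga/N,G)$ onto $F:=\{\psi\in\Tr(\Ga,G)\mid\psi|_N=1_N\}$. The set $F$ is a \emph{face} of the convex set $\Tr(\Ga,G)$: if $\psi=t\psi_1+(1-t)\psi_2$ with $\psi_i\in\Tr(\Ga,G)$ and $0<t<1$, then $1=t\psi_1(n)+(1-t)\psi_2(n)$ for every $n\in N$, and since $\mathrm{Re}\,\psi_i(n)\le|\psi_i(n)|\le\psi_i(e)=1$, taking real parts forces $\psi_i(n)=1$, so $\psi_i\in F$. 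Hence the extreme points of $F$ are precisely $F\cap\Char(\Ga,G)$, and, affine bijections preserving extreme points, $\varphi\in\Char(\Ga/N,G)$ if and only if $\varphi\circ p$ is extreme in $F$, if and only if $\varphi\circ p\in\Char(\Ga,G)$ (the last equivalence because $\varphi\circ p$ automatically lies in $F$).

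There is no genuine obstacle in this proposition; the two steps worth spelling out are the passage in (ii) from $\psi|_N=1_N$ to the factorization of $\psi$ through $p$, which is where the GNS construction enters, and the verification in (iv) that $F$ is a face, after which the extreme-point statement is formal.
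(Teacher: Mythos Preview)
Your proof is correct. The paper itself does not give a proof of this proposition at all, stating only that ``the proof of the following proposition is straightforward''; your argument supplies precisely the details one would expect: the GNS computation showing that $\psi|_N=1_N$ forces $\psi$ to be constant on $N$-cosets, and the face argument identifying the extreme points of $\{\psi\in\Tr(\Ga,G)\mid\psi|_N=1_N\}$ with its intersection with $\Char(\Ga,G)$.
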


Let  $\vfi$ be a normalized function of positive type on $\Ga$. Recall  (see \cite[Theorem C.4.10]{BHV}) that there is a so-called \textbf{GNS-triple} $(\pi, \H, \xi)$ associated to $\vfi$, consisting  of a cyclic unitary representation  of $\Ga$  on a Hilbert space $\H$ with cyclic unit vector $\xi$ such that
$$
\vfi (\g) = \langle\pi (\g)\xi,  \xi\rangle \tout  \g\in \Ga.
$$
The triple $(\pi, \H, \xi)$ is unique in the following sense: if $(\pi', \H', \xi')$ is another GNS
triple associated to $\vfi,$ then there is a \emph{unique}\,  isomorphism $U:\H\to \H'$ of Hilbert spaces such that 
$$U \pi(\g) U^{-1}=\pi'(\g) \tout \g\in \Ga \et U\xi=\xi'.$$

As the next proposition shows, invariant traces on a subgroup of $\Ga$ can be induced to invariant  traces on $\Ga$.
 
For a function $\psi: Y\to \CC$ defined on a subset $Y$ of a set $X,$ 
we denote by $\widetilde{\psi}$ the \textbf{trivial extension}
of $\psi$ to $X,$ that is, the function  $\widetilde{\psi}: X\to \CC$  
given by 
$$
\widetilde{\psi}(x)=
\begin{cases}
\psi(x)&\text{if } x\in Y\\
0&\text{if } x\notin Y
\end{cases},
$$
\begin{proposition}
\label{Pro-InducedTrace}
Let $H$ be a $G$-invariant subgroup of $\Ga$
and $\psi\in \Tr(H, G).$  Then  $\widetilde{\psi}\in \Tr(\Ga,G).$
Moreover, if  $\sigma$ is a GNS representation of $H$ associated to $\psi$,
then  the GNS representation of $\Ga$ associated to $\widetilde{\psi}$
is equivalent to the induced representation $\Ind_H^\Ga \sigma$.
\end{proposition}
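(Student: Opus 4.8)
The plan is to verify directly that $\widetilde{\psi}$ is a $G$-invariant normalized function of positive type, and then to identify its GNS representation with $\Ind_H^\Ga \sigma$. Normalization is immediate since $e\in H$ and $\widetilde{\psi}(e)=\psi(e)=1$. $G$-invariance of $\widetilde{\psi}$ follows from $G$-invariance of $\psi$ together with the fact that $H$ is $G$-invariant: if $x\in H$ then $g(x)\in H$ and $\widetilde\psi(g(x))=\psi(g(x))=\psi(x)=\widetilde\psi(x)$, while if $x\notin H$ then $g(x)\notin H$ and both sides vanish. So the only substantive point is positive-definiteness of $\widetilde\psi$ on all of $\Ga$.

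For positive-definiteness, the cleanest route is to build the induced representation $(\widetilde\sigma, \widetilde\H):=\Ind_H^\Ga\sigma$ explicitly and exhibit a cyclic vector realizing $\widetilde\psi$ as a diagonal matrix coefficient; then positivity is automatic. Concretely, pick a set $S$ of representatives for the right cosets $H\backslash \Ga$ with $e\in S$, model $\widetilde\H$ as $\ell^2(S,\H_\sigma)$ (functions $f:\Ga\to\H_\sigma$ with $f(hx)=\sigma(h)f(x)$), and let $\widetilde\xi\in\widetilde\H$ be the function supported on the coset $H$ with $\widetilde\xi(h)=\sigma(h)\xi_\sigma$, where $\xi_\sigma$ is the GNS vector for $\psi$. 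A direct computation of $\langle\widetilde\sigma(\g)\widetilde\xi,\widetilde\xi\rangle$ shows the inner product picks up a contribution only from the coset containing $\g$, and equals $\psi(\g)$ when $\g\in H$ and $0$ otherwise; that is, $\langle\widetilde\sigma(\g)\widetilde\xi,\widetilde\xi\rangle=\widetilde\psi(\g)$. This already proves $\widetilde\psi$ is of positive type. It remains to check $\widetilde\xi$ is a cyclic vector for $\widetilde\sigma$: the translates $\widetilde\sigma(s^{-1})\widetilde\xi$ for $s\in S$ are supported on distinct cosets, and on the coset $H$ the vectors $\sigma(h)\xi_\sigma$ span a dense subspace since $\xi_\sigma$ is cyclic for $\sigma$; hence the translates of $\widetilde\xi$ span a dense subspace of $\widetilde\H$. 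By uniqueness of the GNS triple, the GNS representation of $\Ga$ attached to $\widetilde\psi$ is unitarily equivalent to $(\widetilde\sigma,\widetilde\H)=\Ind_H^\Ga\sigma$.

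Finally, $\widetilde\psi$ inherits $G$-invariance as a \emph{trace}, i.e. lies in $\Tr(\Ga,G)$, from the invariance check above; one should note that centrality/conjugation-invariance in the sense relevant here is exactly the $G$-invariance condition, so nothing extra is needed.

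The main obstacle is bookkeeping rather than conceptual: one must set up the model of $\Ind_H^\Ga\sigma$ carefully enough (choice of coset representatives, the covariance condition $f(hx)=\sigma(h)f(x)$, and the convention for the $\Ga$-action on the right) so that the matrix-coefficient computation genuinely yields the \emph{trivial extension} $\widetilde\psi$ and not some twisted variant, and so that the cyclicity argument is clean. Once the model is fixed, the computation $\langle\widetilde\sigma(\g)\widetilde\xi,\widetilde\xi\rangle=\widetilde\psi(\g)$ is a short orthogonality-of-cosets calculation, and cyclicity is a one-line density argument. I would not expect any analytic subtlety since everything is discrete and the representations are built by the standard unitary induction procedure.
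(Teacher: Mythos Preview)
Your proposal is correct and follows essentially the same approach as the paper: both realize $\Ind_H^\Ga\sigma$ explicitly, exhibit a vector supported on the coset of $H$ whose diagonal matrix coefficient is $\widetilde\psi$, and verify cyclicity to conclude via uniqueness of the GNS triple. The paper's proof is just a terser version of yours (it cites \cite{HewittRoss2} for the direct verification of positive type and \cite{Folland} for the model of the induced representation), with the only cosmetic difference being the use of left cosets $\Ga/H$ rather than right cosets.
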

\begin{proof}
Set $\vfi:=\widetilde{\psi}.$ It is obvious that $\vfi$ is $G$-invariant.
The fact that $\vfi$ is a function of positive type can be checked directly from
the definition of such a function (see \cite[32.43]{HewittRoss2}).
As we need to identify the GNS representation associated to $\vfi,$ we sketch
another well-known proof for this fact.

Let $(\sigma, \K, \eta)$ be a GNS triple associated to $\psi.$
Let $\pi=\Ind_H^\Ga \sigma$ be realized on $\H=\ell^2(\Ga/H,\K),$ as in 
\cite[Remark 2, \S 6.1]{Folland}. Let $\xi\in \H$ be defined
by $\xi(H)= \eta$ and $\xi(\g H)=0$ if $\g\notin H.$ 
Then $\vfi(\g)= \langle \pi(\g) \xi, \xi\rangle$ for every $\g\in \Ga$ and 
$\xi$ is a cylic vector for $\pi.$ So, 
$(\pi, \H, \xi)$ is a GNS triple for $\vfi.$
\end{proof}

Attached to a given  invariant trace on $\Ga,$ there
are two invariant subgroups of $\Ga$ which will play an important role in the sequel.
 \begin{proposition}
\label{Proposition-KernelProjectiveKernel}
  Let  $\vfi\in \Tr(\Ga, G).$ Define
  $$K_\vfi=\{\g\in \Ga\mid \vfi(\g)=1\} \et P_\vfi=\{\g\in \Ga\mid |\vfi(\g)|=1\}.$$
  \begin{itemize}
  \item[(i)]  $K_\vfi$ and $P_\vfi$ are $G$-invariant closed subgroups of $\Ga$
  with $K_\vfi \subset P_\vfi$.
  \item[(ii)] For $x\in P_\vfi$ and $\g\in \Ga,$ we have $\vfi(x\g)= \vfi(x) \vfi(\g)$;
  in particular, the restriction of $\vfi$ to $P_\vfi$ is a $G$-invariant unitary character of $P_\vfi.$
  \item[(iii)] For $x\in P_\vfi$ and $g\in G,$ we have $g(x)x^{-1} \in K_\vfi.$
    \end{itemize}
 \end{proposition}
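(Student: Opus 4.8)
The plan is to work directly from the GNS triple $(\pi, \H, \xi)$ associated to $\vfi$, since the three statements are really assertions about vectors being fixed or phase-shifted under the representation. First I would record the basic dictionary: $\vfi(\g) = \langle \pi(\g)\xi, \xi\rangle$ with $\norm{\xi} = 1$, so by Cauchy--Schwarz $\abs{\vfi(\g)} \leq 1$ with equality iff $\pi(\g)\xi = c\xi$ for a scalar $c$ of modulus one (and then $c = \vfi(\g)$), while $\vfi(\g) = 1$ iff $\pi(\g)\xi = \xi$. This immediately gives the description
$$
K_\vfi = \{\g \in \Ga \mid \pi(\g)\xi = \xi\}, \qquad P_\vfi = \{\g \in \Ga \mid \pi(\g)\xi \in \TT\xi\},
$$
where $\TT$ is the unit circle. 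From this, $(i)$ is formal: both sets are subgroups (if $\pi(\g)\xi = c\xi$ and $\pi(h)\xi = d\xi$ then $\pi(\g h)\xi = cd\xi$ and $\pi(\g^{-1})\xi = c^{-1}\xi$), they contain $e$, and they are closed because $\vfi$ is continuous (every function on a discrete group is, but the point is the level sets of a continuous function are closed). The inclusion $K_\vfi \subset P_\vfi$ is obvious. For $G$-invariance: since $\vfi \circ g = \vfi$ for every $g \in G$, the level set $\{\abs{\vfi} = 1\}$ and the level set $\{\vfi = 1\}$ are each $G$-invariant — no GNS machinery needed here, just the invariance hypothesis on $\vfi$.

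For $(ii)$, the key is that a unit vector $\xi$ that is an eigenvector for $\pi(x)$ with unit-modulus eigenvalue is in fact ``as good as invariant'' for computing matrix coefficients: if $\pi(x)\xi = \vfi(x)\xi$ with $\abs{\vfi(x)} = 1$, then for any $\g$,
$$
\vfi(x\g) = \langle \pi(x)\pi(\g)\xi, \xi\rangle = \langle \pi(\g)\xi, \pi(x)^{-1}\xi\rangle = \langle \pi(\g)\xi, \overline{\vfi(x)}\xi\rangle = \vfi(x)\langle \pi(\g)\xi, \xi\rangle = \vfi(x)\vfi(\g),
$$
using $\pi(x)^{-1}\xi = \pi(x)^* \xi = \overline{\vfi(x)}\xi$. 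An identical computation with $\g x$ in place of $x\g$ (or just applying the above to $x^{-1}$ and rearranging) gives $\vfi(\g x) = \vfi(\g)\vfi(x)$, so $\vfi|_{P_\vfi}$ is multiplicative; combined with $\abs{\vfi} = 1$ on $P_\vfi$ and $\vfi(e) = 1$, it is a unitary character, and it is $G$-invariant because $\vfi$ itself is. (One should note the restriction is also of positive type as a restriction of a function of positive type, but multiplicativity plus modulus one is what names it a character.)

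For $(iii)$, let $x \in P_\vfi$ and $g \in G$; I want to show $\vfi(g(x)x^{-1}) = 1$. Using part $(ii)$ — which applies since $x$, hence $x^{-1}$, lies in $P_\vfi$ — we have $\vfi(g(x)x^{-1}) = \vfi(g(x))\vfi(x^{-1}) = \vfi(g(x))\overline{\vfi(x)}$, because $\vfi(x^{-1}) = \overline{\vfi(x)}$ (conjugate-symmetry of functions of positive type) and $\abs{\vfi(x)} = 1$ so $\overline{\vfi(x)} = \vfi(x)^{-1}$. By $G$-invariance $\vfi(g(x)) = \vfi(x)$, so the product is $\vfi(x)\overline{\vfi(x)} = \abs{\vfi(x)}^2 = 1$, hence $g(x)x^{-1} \in K_\vfi$. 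I do not anticipate a genuine obstacle here; the only mild subtlety is making sure part $(ii)$ is proved for products on both sides (or invoked correctly) before using it in $(iii)$, and being careful that $g(x) \in P_\vfi$ as well — which follows from the $G$-invariance of $P_\vfi$ established in $(i)$ — so that the expression $\vfi(g(x))\vfi(x^{-1})$ is legitimately a product of values at elements of $P_\vfi$. Everything reduces to elementary Hilbert space manipulation once the GNS eigenvector description is in hand.
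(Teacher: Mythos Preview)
Your proof is correct and follows the same approach as the paper: identify $K_\vfi$ and $P_\vfi$ with the $\pi$-stabilizer and $\pi$-eigenline of the GNS cyclic vector via the equality case of Cauchy--Schwarz, and read off (i)--(iii) from this description. The paper's own proof is in fact terser than yours, merely recording the eigenvector characterizations and asserting that the claims follow.
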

   \begin{proof}
   Let $(\pi, \H, \xi)$ be  GNS-triple associated to $\vfi.$ Using the equality case of Cauchy-Schwarz inequality,  it is clear that 
  $$
  K_\vfi=\{x\in \Ga\mid \pi(x)\xi=\xi\} \et P_\vfi=\{x\in \Ga\mid \pi(x)\xi=\vfi(x)\xi\}.
  $$
  Claims (i), (ii) and (iii) follow from this.
  
 \end{proof}

We will later need the following elementary lemma.
\begin{lemma}
\label{Lem-Hilbert}
  Let $\vfi\in \Tr(\Ga, G)$ and $\ga\in \Ga.$ Assume that there exists a sequence
 $(g_n)_{n\geq 1}$ in $G$ such that 
 $$\vfi(g_n(\ga)g_m(\ga)^{-1})=0\tout n\neq m.$$
 Then $\vfi(\ga)=0.$
  \end{lemma}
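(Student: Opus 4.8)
The plan is to use the GNS representation $(\pi,\H,\xi)$ associated to $\vfi$ and translate the hypothesis into a statement about vectors in $\H$. Set $v_n=\pi(g_n(\ga))\xi$. Since $\vfi$ is $G$-invariant, each $v_n$ has the same norm, namely $\|v_n\|^2=\vfi\big(g_n(\ga)^{-1}g_n(\ga)\big)=\vfi(e)=1$. The hypothesis $\vfi(g_n(\ga)g_m(\ga)^{-1})=0$ for $n\neq m$ says precisely that $\langle v_m, v_n\rangle = \langle \pi(g_n(\ga)g_m(\ga)^{-1})\xi,\xi\rangle$ — wait, I need to be careful with the ordering; by the GNS formula $\vfi(\g)=\langle\pi(\g)\xi,\xi\rangle$, so $\langle v_n,v_m\rangle=\langle\pi(g_n(\ga))\xi,\pi(g_m(\ga))\xi\rangle=\langle\pi(g_m(\ga)^{-1}g_n(\ga))\xi,\xi\rangle=\vfi(g_m(\ga)^{-1}g_n(\ga))$. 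So I should take the hypothesis in the form that makes $\{v_n\}$ orthonormal; in any case the family $(v_n)_{n\ge1}$ is an orthonormal sequence in $\H$ (after possibly replacing $g_n(\ga)$ by its inverse inside $\vfi$, which is harmless since $\vfi(\g^{-1})=\overline{\vfi(\g)}$ and vanishing is preserved).

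Next I would exploit the key fact that $\vfi$ is \emph{central} in the appropriate sense: more precisely, $G$-invariance of $\vfi$ gives, for the vector $\xi$, that $\pi(g_n)\xi$ relates to $\xi$ via the canonical unitaries implementing the $G$-action on the von Neumann algebra $\pi(\Ga)''$ (this is the standard setup developed earlier in Section~\ref{S:Prelimilaries}). However, the cleanest route avoids von Neumann algebras entirely: I simply estimate. Consider the averages $\frac1N\sum_{n=1}^N v_n$. By orthonormality, $\big\|\frac1N\sum_{n=1}^N v_n\big\|^2=\frac1N\to 0$ as $N\to\infty$. Now I want to relate this average back to $\vfi(\ga)$. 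Using $G$-invariance of $\vfi$ again,
$$
\Big\langle \frac1N\sum_{n=1}^N v_n,\ \xi\Big\rangle=\frac1N\sum_{n=1}^N\vfi(g_n(\ga))=\frac1N\sum_{n=1}^N\vfi(\ga)=\vfi(\ga).
$$
By Cauchy–Schwarz, $|\vfi(\ga)|\le \big\|\frac1N\sum_{n=1}^N v_n\big\|\cdot\|\xi\|=\frac1{\sqrt N}\to 0$, hence $\vfi(\ga)=0$.

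I do not anticipate a genuine obstacle here; the lemma is elementary once one passes to the GNS picture. The only point requiring a small amount of care is the bookkeeping of the orthonormality: one must check that the hypothesis, as literally stated with $\vfi(g_n(\ga)g_m(\ga)^{-1})=0$, indeed yields $\langle v_n,v_m\rangle=0$ for $n\neq m$ for a suitable choice of vectors $v_n$ (either $v_n=\pi(g_n(\ga))\xi$ or $v_n=\pi(g_n(\ga)^{-1})\xi$, depending on the GNS convention $\vfi(\g)=\langle\pi(\g)\xi,\xi\rangle$), and that $\|v_n\|=1$ for all $n$ — both of which are immediate from $\vfi(e)=1$ and $G$-invariance. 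One should also note that the sequence $(g_n)$ being infinite is what makes the averaging argument work; with finitely many terms the bound $1/\sqrt N$ does not go to zero. This finite-ness is guaranteed by the indexing $n\ge 1$ in the statement. So the whole proof is: pass to GNS, observe $(v_n)$ is an infinite orthonormal family, average, and apply Cauchy–Schwarz.
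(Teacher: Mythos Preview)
Your proof is correct and follows essentially the same route as the paper: pass to the GNS triple, observe that the vectors $v_n=\pi(g_n(\ga)^{-1})\xi$ form an infinite orthonormal sequence (this is the choice that matches the hypothesis exactly, as you noted), and use $G$-invariance to identify $\langle v_n,\xi\rangle$ with $\overline{\vfi(\ga)}$ independently of $n$. The only cosmetic difference is that the paper invokes weak convergence of orthonormal sequences to $0$, whereas you average and apply Cauchy--Schwarz; both are standard ways of saying that a fixed vector cannot have constant nonzero inner product with infinitely many orthonormal vectors (Bessel's inequality).
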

\begin{proof}
Let $(\pi, \H, \xi)$ be a GNS triple for $\vfi.$
We have   
$$
\begin{aligned}
\langle \pi( g_m(\ga)^{-1})\xi,\pi( g_n(\ga)^{-1})\xi\rangle
&=\langle \pi(g_n(\ga)g_m(\ga)^{-1})\xi,\xi\rangle\\
&=\vfi(g_n(\ga)g_m(\ga)^{-1}) =0.
\end{aligned}
$$
for all $m,n$ with $m\neq n$.
Therefore, $(\pi(g_n(\ga)^{-1})\xi)_{n\geq 1}$ 
is an orthonormal sequence in $\H$ and
so  converges weakly to $0.$ 
The claim follows,  since $\vfi(\ga)= \overline{\vfi(\ga^{-1})}$ and, for all $n,$ 
$$
\vfi(\ga^{-1})=\vfi(g_n(\ga)^{-1})=\langle \pi(g_n(\ga)^{-1})\xi,\xi\rangle.
$$
\end{proof}

\subsection{Invariant traces and von Neumann algebras}
\label{TracesAndVNalgebras}
We relate traces on groups to traces on appropriate von Neumann algebras.

Let $\Gamma, G$ be  discrete groups and assume that $G$ acts by automorphisms on $\Ga.$ 
This uniqueness property in the GNS construction for functions
of positive type  has the following consequence for $G$-invariant traces on $G$
\begin{proposition}
\label{Pro-GNS-Trace}
Let  $\vfi\in\Tr(\Ga, G)$ and let $(\pi, \H, \xi)$ be  a GNS-triple associated to $\vfi.$
There exists a unique unitary representation $g\mapsto U_g$ of $G$ on $\H$ such that $$U_g \pi(\g) U_g^{-1}=\pi(g(\g)) \tout g\in G, \ga\in \Ga \et U_g\xi=\xi.$$
\end{proposition}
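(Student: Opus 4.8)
The plan is to deduce everything from the uniqueness part of the GNS construction recalled just above the statement. Fix $g\in G$ and consider the unitary representation $\pi_g:=\pi\circ g$ of $\Ga$, i.e. $\pi_g(\ga)=\pi(g(\ga))$ for $\ga\in\Ga$. Since $g$ is an automorphism of $\Ga$, we have $g(\Ga)=\Ga$, so $\xi$ remains a cyclic vector for $\pi_g$; and using the $G$-invariance of $\vfi$,
$$\langle \pi_g(\ga)\xi,\xi\rangle=\langle \pi(g(\ga))\xi,\xi\rangle=\vfi(g(\ga))=\vfi(\ga)\tout\ga\in\Ga.$$
Hence $(\pi_g,\H,\xi)$ is again a GNS triple associated to $\vfi$. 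By the uniqueness property of the GNS triple, there is a \emph{unique} unitary operator $U_g\colon\H\to\H$ such that
$$U_g\,\pi(\ga)\,U_g^{-1}=\pi_g(\ga)=\pi(g(\ga))\tout\ga\in\Ga\et U_g\xi=\xi.$$
This already produces, for every $g\in G$, the operator required in the statement.

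Next I would check that $g\mapsto U_g$ is a homomorphism from $G$ into the unitary group of $\H$; as $G$ is discrete, this is all that is needed for it to be a unitary representation. For $g,h\in G$ the unitary $U_gU_h$ satisfies $(U_gU_h)\xi=U_g\xi=\xi$ and
$$(U_gU_h)\,\pi(\ga)\,(U_gU_h)^{-1}=U_g\big(U_h\,\pi(\ga)\,U_h^{-1}\big)U_g^{-1}=U_g\,\pi(h(\ga))\,U_g^{-1}=\pi\big((gh)(\ga)\big)\tout\ga\in\Ga.$$
Thus $U_gU_h$ satisfies the two conditions that characterize $U_{gh}$ uniquely, whence $U_gU_h=U_{gh}$; taking $g=h=e$ (or arguing directly) gives $U_e=\mathrm{id}_\H$. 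Therefore $g\mapsto U_g$ is a unitary representation of $G$ on $\H$ with the asserted intertwining property and fixing $\xi$.

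Finally, uniqueness of the representation is immediate: if $(U'_g)_{g\in G}$ is another family with $U'_g\,\pi(\ga)\,(U'_g)^{-1}=\pi(g(\ga))$ for all $\ga\in\Ga$ and $U'_g\xi=\xi$, then for each fixed $g$ the operator $U'_g$ satisfies exactly the two conditions characterizing $U_g$, so $U'_g=U_g$. I do not expect any genuine obstacle here; the only point that requires a moment's thought is that $\pi\circ g$ is again a \emph{GNS} representation of the \emph{same} function $\vfi$ — cyclicity of $\xi$ uses the surjectivity of the automorphism $g$ and equality of the diagonal matrix coefficient with $\vfi$ uses the $G$-invariance of $\vfi$ — after which the rigidity of the GNS triple does all the work.
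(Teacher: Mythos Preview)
Your proof is correct and follows essentially the same approach as the paper's: define $\pi_g=\pi\circ g$, observe that $(\pi_g,\H,\xi)$ is again a GNS triple for $\vfi$ by $G$-invariance, and invoke the uniqueness of the GNS triple to obtain $U_g$. You simply spell out in detail the verification that $g\mapsto U_g$ is a homomorphism (which the paper leaves as ``one checks'') and the uniqueness of the representation.
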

\begin{proof}
Let $g\in G.$ Consider  the unitary representation $\pi^g$  of $\Ga$  on $\H$ given by 
$\pi^g(\g)=\pi(g(\g))$ for $\g\in \Ga.$
Since $\vfi$ is invariant under $g,$ the triple $(\pi^g, \H, \xi)$ is another GNS-triple associated to $\vfi$. Hence,  there exists a unique  unitary operator $U_g:\H\to \H$ such that 
$$U_g \pi(\g) U_g^{-1}=\pi^g(\g) \tout \g\in \Ga \et U_g\xi=\xi.$$
Using the uniqueness of $U_g$, one checks that  $g\mapsto U_g$ is 
a representation of $G.$
\end{proof}

 We now give a necessary and sufficient condition for a $G$-invariant trace on $\Ga$ to be
 a character.

Let $(\pi, \H, \xi)$ be  GNS-triple associated to $\vfi$
and  $g\mapsto U_g$ the unitary representation of $G$ on $\H$ 
as in \ref{Pro-GNS-Trace}.
Let  $\M_\vfi$ the von Neumann subalgebra of $\L(\H)$ generated 
 by  the set of operators $\pi(\Ga) \cup \{U_g\mid g\in G\}$, that is,
$$\M_\vfi:= \left\{\pi(\ga), U_g\mid \g\in \Ga, g\in G\right\}''.$$

\begin{proposition}
\label{Pro-GNS-VN}
Let  $\vfi\in\Tr(\Ga, G)$ with associated GNS-triple $(\pi, \H, \xi)$ and
$\M_\vfi$  the von Neumann subalgebra of $\L(\H)$ as above.
For every $T\in\L(\H)$ with $0\leq T\leq I$,  let $\vfi_T$ be defined by 
$\vfi_T(\g)=\langle \pi(\ga) T\xi,T\xi\rangle$ for $\g\in \Ga.$
Then $T\mapsto \vfi_T$ is a bijection between 
$\{T\in\M_\vfi'\mid 0\leq T\leq I\}$ and the set of $G$-invariant  functions of positive type on $\Ga$ which are dominated by $\vfi.$
In particular, we have $\vfi \in \Char(\Ga, G)$ if and only if $\M_\vfi'=\CC I.$
\end{proposition}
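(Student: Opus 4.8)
The plan is to run the standard dictionary between functions of positive type dominated by $\vfi$ and positive contractions in the commutant of the GNS representation, and then to check that the $G$-invariance of $\vfi_T$ corresponds exactly to $T$ commuting with the operators $U_g$ as well.

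First I would recall the classical correspondence for $\pi$ alone: if $(\pi,\H,\xi)$ is the GNS triple of $\vfi$, then for $T\in\pi(\Ga)'$ with $0\le T\le I$ the function $\vfi_T(\ga)=\langle\pi(\ga)T\xi,T\xi\rangle=\langle\pi(\ga)T^2\xi,\xi\rangle$ is of positive type and dominated by $\vfi$ (since $\vfi-\vfi_T=\vfi_{\sqrt{I-T^2}}$ is again of positive type, using $0\le I-T^2\le I$ and $T\in\pi(\Ga)'$); conversely, given a function $\psi$ of positive type dominated by $\vfi$, the GNS uniqueness/domination lemma (\cite[Theorem C.4.10]{BHV} together with the standard argument, cf. \cite[32.43]{HewittRoss2}) produces a unique $S\in\pi(\Ga)'$ with $0\le S\le I$ and $\psi(\ga)=\langle\pi(\ga)S\xi,\xi\rangle$; setting $T=\sqrt S$ recovers $\psi=\vfi_T$, and injectivity on $\{0\le T\le I\}\cap\pi(\Ga)'$ follows because $T\mapsto T^2$ is injective there and $T^2$ is determined by $\psi$ on the cyclic vector $\xi$. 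This is all routine and I would state it as the ``$G$-trivial'' case, i.e. the statement with $\M_\vfi$ replaced by $\pi(\Ga)''$.

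Next I would upgrade ``dominated and of positive type'' to ``dominated, of positive type, and $G$-invariant''. The key point is: for $T\in\pi(\Ga)'$ with $0\le T\le I$, the function $\vfi_T$ is $G$-invariant if and only if $T$ commutes with every $U_g$. For the ``if'' direction, $\vfi_T(g(\ga))=\langle\pi(g(\ga))T\xi,T\xi\rangle=\langle U_g\pi(\ga)U_g^{-1}T\xi,T\xi\rangle=\langle\pi(\ga)U_g^{-1}TU_g\xi,U_g^{-1}TU_g\xi\rangle$ using $U_g\xi=\xi$; if $TU_g=U_gT$ this equals $\langle\pi(\ga)T\xi,T\xi\rangle=\vfi_T(\ga)$. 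For the ``only if'' direction, suppose $\vfi_T$ is $G$-invariant. Then both $T$ and $U_g^{-1}TU_g$ lie in $\pi(\Ga)'$, are positive contractions, and — by the computation just made — give rise to the same dominated positive-type function $\vfi_T=\vfi_{U_g^{-1}TU_g}$; by the injectivity established in the previous paragraph, $U_g^{-1}TU_g=T$, i.e. $[T,U_g]=0$. Since a positive contraction $T\in\pi(\Ga)'$ commutes with all $U_g$ exactly when $T\in(\pi(\Ga)\cup\{U_g\})'=\M_\vfi'$, this proves the asserted bijection between $\{T\in\M_\vfi'\mid 0\le T\le I\}$ and the $G$-invariant positive-type functions dominated by $\vfi$.

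Finally, the ``in particular'' clause: by Definition \ref{Def-Trace}(iii), $\vfi\in\Char(\Ga,G)$ iff every $G$-invariant positive-type $\psi\preceq\vfi$ is of the form $\la\vfi$ with $\la>0$; under the bijection just established, $\psi=\la\vfi$ corresponds to $T=\sqrt{\la}\,I$, so $\Char(\Ga,G)$ holds iff the only positive contractions in $\M_\vfi'$ are scalars, which (since every selfadjoint element of a von Neumann algebra is a real-linear combination of positive contractions in it, and a von Neumann algebra is the span of its selfadjoint part) is equivalent to $\M_\vfi'=\CC I$. The only genuinely non-formal point is the uniqueness/injectivity statement feeding the ``only if'' direction in the middle paragraph; everything else is bookkeeping, so I expect that to be where a little care is needed, though it is entirely standard GNS theory.
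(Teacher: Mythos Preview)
Your proof is correct and follows essentially the same route as the paper's: first invoke the classical bijection between positive contractions in $\pi(\Ga)'$ and dominated positive-type functions, then show that $G$-invariance of $\vfi_T$ is equivalent to $T$ commuting with all $U_g$, via the uniqueness in that bijection applied to $T$ and $U_g^{-1}TU_g$. The paper cites \cite[Proposition 2.5.1]{DixmierC*} for the classical correspondence rather than the references you mention (your citation of \cite[32.43]{HewittRoss2} is not quite the right spot for this), and in the converse step the paper phrases the conclusion as ``$U_{g^{-1}}TU_g$ is a scalar multiple of $T$'' and then argues the scalar is $1$ using unitarity and positivity, whereas you invoke injectivity directly to get equality---your version is in fact the cleaner formulation.
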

\begin{proof}
The map $T\mapsto \vfi_T$ is known to be a bijection between the set 
$\{T\in \pi(\Ga)'\mid 0\leq T\leq I\}$ and the set of  functions of positive type on $\Ga$
which are dominated by $\vfi$ (apply \cite[Proposition 2.5.1]{DixmierC*}  to the $*$-algebra $\CC[\Ga]$, with the convolution product and the involution given by $f^*(\ga)=f(\ga^{-1})$ for $f\in \CC[\Ga]$).

Therefore, it suffices to check that, for $T\in \pi(\Ga)'$ with $0\leq T\leq I,$
the function  $\vfi_T$ is $G$-invariant if and only if $T\in \{U_g\mid g\in G\}'$.

Let $T\in \M_\vfi'$ with $0\leq T\leq I.$  For every $g\in G,$ we have
$$
\begin{aligned}
\vfi_T(g(\g))&=\langle \pi(g(\g)) T\xi,T\xi\rangle = \langle U_g \pi(\g) U_{g^{-1}}T\xi,T\xi\rangle\\
&=\langle \pi(\g) TU_{g^{-1}}\xi,T U_{{g}^{-1}}\xi\rangle
=\langle \pi(\g) T\xi,T\xi\rangle\\
&= \vfi_T(\g),
\end{aligned}
$$
for all $\g\in \Ga;$ so, $\vfi_T$ is $G$-invariant.

Conversely, let $T\in \pi(\Ga)'$ with $0\leq T\leq I$ be such that $\vfi_T$ is $G$-invariant. 
Let  $g\in G$. For every $\g\in \Ga,$ we have
 $$
 \begin{aligned}
 \vfi_{U_{g^{-1}}TU_g}(\ga)&=\langle \pi(\g) U_{g^{-1}}TU_g\xi,U_{g^{-1}}TU_g\xi\rangle\\
 &=\langle  \pi(\g) U_{g^{-1}}T\xi,U_{g^{-1}}T\xi\rangle\\
 &=\langle U_g \pi(\g) U_{g^{-1}}T\xi,T\xi\rangle\\
 &=\langle  \pi(g(\g)) T\xi,T\xi\rangle\\
 &=\vfi_T(g(\g))= \vfi_T(\g).
 \end{aligned}
 $$
Hence, $U_{g^{-1}}TU_g$ is a scalar multiple of $T$, by uniqueness of $T.$
Since $U_g$ is unitary and $T\geq 0,$ it follows that $U_{g^{-1}}TU_g= T$ for all $g\in G$
and so $T\in \M_\vfi'.$
\end{proof}

Let $Z(\Ga)$  be the center of $\Ga.$ We call the  subgroup 
$$Z(\Ga)^G:=\{z\in Z(\Ga)\mid g(z)=z \text{ for all } g\in G\}$$
the \emph{$G$-center}
of $\Ga.$ 
We draw a first consequence on the values
taken by a  $G$-invariant character on $Z(\Ga)^G.$

\begin{corollary}
\label{Cor-CharCenter}
Let $\vfi\in \Char(\Ga, G).$
The $G$-center $Z(\Ga)^G$ of $\Ga$ is contained in $P_\vfi.$
\end{corollary}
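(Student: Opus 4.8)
The plan is to exploit the von Neumann algebra characterization of characters established in Proposition~\ref{Pro-GNS-VN}. Let $(\pi, \H, \xi)$ be a GNS-triple associated to $\vfi$, and let $g\mapsto U_g$ be the unitary representation of $G$ on $\H$ provided by Proposition~\ref{Pro-GNS-Trace}, so that $U_g\pi(\ga)U_g^{-1}=\pi(g(\ga))$ for all $g\in G$, $\ga\in\Ga$, and $U_g\xi=\xi$. Since $\vfi\in\Char(\Ga, G)$, Proposition~\ref{Pro-GNS-VN} tells us that $\M_\vfi'=\CC I$, where $\M_\vfi=\{\pi(\ga),U_g\mid \ga\in\Ga,\ g\in G\}''$.

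Next I would take an arbitrary $z\in Z(\Ga)^G$ and show $\pi(z)\in\M_\vfi'$. On the one hand, $\pi(z)$ commutes with every $\pi(\ga)$ because $z$ is central in $\Ga$. On the other hand, $\pi(z)$ commutes with every $U_g$: indeed $U_g\pi(z)U_g^{-1}=\pi(g(z))=\pi(z)$, where the last equality uses precisely that $z$ is fixed by $G$. Hence $\pi(z)$ commutes with the whole generating set of $\M_\vfi$, so $\pi(z)\in\M_\vfi'=\CC I$, and therefore $\pi(z)=cI$ for some scalar $c$; since $\pi(z)$ is unitary, $|c|=1$.

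Finally, $\vfi(z)=\langle\pi(z)\xi,\xi\rangle=c\norm{\xi}^2=c$, so $|\vfi(z)|=1$, which means $z\in P_\vfi$. As $z\in Z(\Ga)^G$ was arbitrary, this gives $Z(\Ga)^G\subset P_\vfi$.

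I do not expect any genuine obstacle here: the statement is essentially an immediate corollary of Proposition~\ref{Pro-GNS-VN}. The only points requiring a little care are (a) invoking the \emph{correct} direction of that proposition, namely that extremality of $\vfi$ forces the commutant of the \emph{enlarged} algebra $\M_\vfi$ (not merely of $\pi(\Ga)$) to be $\CC I$, and (b) observing that the hypothesis $g(z)=z$ for all $g\in G$ is exactly what is needed to guarantee $\pi(z)$ commutes with the operators $U_g$, so that it lands in $\M_\vfi'$.
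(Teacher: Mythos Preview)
Your proof is correct and follows essentially the same approach as the paper's own proof: both use Proposition~\ref{Pro-GNS-VN} to conclude that $\pi(z)$ lies in $\M_\vfi'=\CC I$ by checking it commutes with all $\pi(\ga)$ and all $U_g$, and then deduce $z\in P_\vfi$. Your write-up simply makes explicit the details (the computation $U_g\pi(z)U_g^{-1}=\pi(g(z))=\pi(z)$ and the evaluation $\vfi(z)=c$) that the paper leaves implicit.
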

\begin{proof}
Let $(\pi, \H, \xi)$ be  GNS-triple associated to $\vfi.$
For every $z\in Z(\Ga)^G,$ the operator
$\pi(z)$ commutes with $\pi(\ga)$ and $U_g$
for every $\ga\in \Ga$ and every $g\in G.$
It follows from Proposition~\ref{Pro-GNS-VN} that $\pi(z)$ is a scalar multiple of $I_{\H}$
and hence that  $z\in P_\vfi.$
\end{proof}

We will be mostly interested in the case where $G$ contains the group
 of all inner automorphisms of $\Ga.$
Upon replacing $G$ by the semi-direct group $G\ltimes \Ga,$ we may assume
without loss of generality that $\Ga$ is a normal subgroup of $G.$

Let $G$ be a discrete group and $N$ a normal subgroup of $G.$ 
Then $\Tr(N, G)\subset \Tr(N)$ denotes the convex set of $G$-invariant traces on $N$
and $\Char(N,G)$ the set of extreme points in $\Tr(N, G).$
We first draw a consequence of  Propositions~\ref{Pro-GNS-Trace} and \ref{Pro-GNS-VN} in the case $N=G.$

Recall that a (finite) trace on a von Neumann algebra $\M\subset \L(\H)$  is   a   positive linear functional $\tau$ on $\M$   such that
$$\tau (TS)= \tau (ST) \tout S,T\in \M.$$
Such a trace $\tau$ is faithful if $\tau(T^*T)>0$ for every $T\neq 0$
and normal if $\tau$ is continuous on the unit ball of $\M$
for the weak operator topology.
A von Neumann algebra $\M$ which has a normal faithful   trace is said to be a \textbf{finite von Neumann algebra}.

Let $\tau$ be a   normal faithful   trace on $\M$ and let $T$ be in the center $\M\cap \M'$ of $\M$ with $0\leq T\leq I.$  Then $\tau_T:\M\to \CC$, defined by 
$$\tau_T(S)= \tau (ST) \tout S\in \M,$$
 is a normal trace on $\M$ which is dominated by $\tau$
(that is,  $\tau_T(S)\leq \tau(S)$ for every $S\in \M$ with $S\geq 0$).
The map $T\mapsto \tau_T$ is a bijection
between $\{T\in \M\cap \M'\mid 0\leq T\leq I\}$ and the set of normal traces
  on $\M$ which are  dominated by  $\tau$  (see Theorem 3 in Chap. I, \S 6 of  \cite{DixmierVN}).

Recall that a von Neumann subalgebra $\M$ of $\L(\H)$ is
a \textbf{factor} if its center $\M\cap \M'$ consists only of multiples
of the identity operator $I.$

\begin{corollary}
\label{Pro-GNS-Trace-bis}
Let  $\vfi\in\Tr(G)$ and let $(\pi, \H, \xi)$ be  GNS-triple associated to $\vfi.$
\begin{itemize}
\item[(i)]  The linear functional $\tau:T\mapsto \langle T\xi, \xi\rangle$ is a normal
faithful trace on $\pi(G)''.$
\item[(ii)] The commutant $\M_\vfi'$ of $\M_\vfi$ coincides with the center of  the von Neumann algebra  $\pi(G)''$ generated by $\pi(G)$. In particular, 
 $\vfi\in\Char(G)$ if and only if $\pi(G)''$ is a factor.
\end{itemize}
\end{corollary}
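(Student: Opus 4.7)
The plan is to verify (i) by checking the trace property on a dense subalgebra and deriving faithfulness from a right-regular action, then to deduce (ii) by showing that the $U_g$'s differ from $\pi(g)$ by elements of $\pi(G)'$, so that $\M_\vfi$ is generated by $\pi(G)''$ together with $\pi(G)'$.

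For (i), $\tau$ is a vector state, hence automatically normal. On the $*$-subalgebra $\pi(\CC[G])$ the trace property reduces to the identity $\vfi(\gamma\gamma') = \vfi(\gamma'\gamma)$, which is exactly the conjugation invariance of $\vfi$; since $\pi(\CC[G])$ is $\sigma$-weakly dense in $\pi(G)''$ and $(S,T) \mapsto \tau(ST)$ is separately ultraweakly continuous on bounded sets, this propagates to all of $\pi(G)''$. For faithfulness, I introduce the operator $R_g$ on $\H$ defined by $R_g(\pi(\gamma)\xi) = \pi(\gamma g^{-1})\xi$: centrality of $\vfi$ makes this an isometry that extends to a unitary, and a direct check shows $R_g \in \pi(G)'$ with $R_g \xi = \pi(g^{-1})\xi$. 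Consequently $\pi(G)'\xi \supseteq \pi(G)\xi$ is dense, so $\xi$ is cyclic for $\pi(G)'$, hence separating for $\pi(G)''$, and $\tau(T^*T) = \|T\xi\|^2 = 0$ forces $T = 0$.

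For (ii), the key computation is
\[
U_g(\pi(\gamma)\xi) = \pi(g\gamma g^{-1})\xi = \pi(g)\,R_g(\pi(\gamma)\xi)
\]
on the dense subspace $\pi(G)\xi$, giving $U_g = \pi(g) R_g$ and hence $R_g = \pi(g)^{-1} U_g \in \M_\vfi$. So $\M_\vfi$ is generated by $\pi(G) \cup \{R_g \mid g \in G\}$. By part (i), $\xi$ is a cyclic separating tracial vector for $\pi(G)''$, placing us in the tracial standard form; the canonical involution $J(T\xi) = T^*\xi$ then satisfies $J\pi(G)''J = \pi(G)'$, and a direct calculation on $\pi(G)\xi$ shows $J\pi(\gamma)J = R_\gamma$, whence $\{R_g \mid g\in G\}'' = J\pi(G)''J = \pi(G)'$. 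Consequently $\M_\vfi = \pi(G)'' \vee \pi(G)'$ and
\[
\M_\vfi' = \bigl(\pi(G)'' \cup \pi(G)'\bigr)' = \pi(G)' \cap \pi(G)'' = Z(\pi(G)'').
\]
Combined with Proposition~\ref{Pro-GNS-VN}, which equates $\vfi \in \Char(G)$ with $\M_\vfi' = \CC I$, this yields the factor criterion.

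The main obstacle is the equality $\{R_g\}'' = \pi(G)'$: it requires the tracial standard form of $\pi(G)''$ and therefore depends on part (i), which is why the two assertions must be proved in this order. Everything else is essentially bookkeeping once the right-regular operators $R_g$ have been introduced and shown to lie in $\pi(G)'$.
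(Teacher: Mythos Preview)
Your proof is correct, and it takes a genuinely different route from the paper's.

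For (i), the paper establishes faithfulness in one line by using the trace property itself: if $\tau(T^*T)=0$ then $\Vert T\pi(g)\xi\Vert^2=\tau(\pi(g^{-1})T^*T\pi(g))=\tau(T^*T)=0$, so $T$ kills the dense subspace $\pi(G)\xi$. You instead construct the right-regular operators $R_g\in\pi(G)'$ and show that $\xi$ is separating. This is more work, but it pays off because you reuse the $R_g$'s in part (ii).

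For (ii), the arguments diverge more substantially. The paper shows one inclusion by noting that $U_g T U_g^{-1}=\pi(g)T\pi(g)^{-1}$ on $\pi(G)''$, so any central $T$ commutes with every $U_g$; for the reverse inclusion it invokes Proposition~\ref{Pro-GNS-VN} to interpret $T\in\M_\vfi'$ as a dominated $G$-invariant positive definite function, extends this to a normal trace on $\pi(G)''$, and then appeals to the Dixmier result that dominated normal traces on a finite von Neumann algebra are parametrized by the positive part of its center. Your approach is more structural: having identified $U_g=\pi(g)R_g$, you get $\M_\vfi=\pi(G)''\vee\{R_g\}''$, and then use the tracial standard form (with $J(T\xi)=T^*\xi$ and $J\pi(\gamma)J=R_\gamma$) to identify $\{R_g\}''=J\pi(G)''J=\pi(G)'$, whence $\M_\vfi'=\pi(G)'\cap\pi(G)''$ immediately.

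Your argument is more self-contained in that it avoids the classification of dominated traces, at the cost of invoking (the elementary, tracial case of) the commutation theorem $J\M J=\M'$. The paper's argument is shorter and dovetails with the machinery already developed in Proposition~\ref{Pro-GNS-VN}. Both are standard and correct; your version makes the underlying standard-form picture explicit, which is a nice conceptual bonus.
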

\begin{proof}
(i) One checks immediately that $\tau$, as defined above, is a trace on $\pi(G)''$. It is clear that $\tau$ is normal. Let $T\in \pi(G)''$ be such that $\tau(T^*T)=0.$
Then 
$$\Vert T\pi(g)\xi\Vert^2=\tau\left(\pi(g^{-1})T^*T\pi(g)\right)=\tau(T^*T)=0,$$
 that is, $T\pi(g)\xi=0$ for all
$g\in G;$ hence, $T=0$ since $\xi$ is a cyclic vector for $\pi.$ So, $\tau$ is faithful.

\noindent
(ii) 
 Observe first that, for every $g\in G,$ we have
 $$U_g \pi(x)U_{g^{-1}}= \pi(gx g^{-1})= \pi(g) \pi(x) \pi(g^{-1}) \tout  x\in G,$$
where $g\mapsto U_g$ is the representation of $G$ as in  Proposition~\ref{Pro-GNS-Trace}. It follows that $U_g TU_{g^{-1}}= \pi(g)T\pi(g^{-1})$ for every $T\in \pi(G)''.$
Hence,  $\pi(G)''\cap \pi(G)'$ is contained in $\M_\vfi'.$

Conversely, let  $T\in \M_\vfi'$ with $0\leq T\leq I.$
By Proposition~\ref{Pro-GNS-VN},  $\vfi_{T^{1/2}}$ 
is a $G$-invariant function of positive type dominated by $\vfi.$
The canonical extension of $\vfi_{T^{1/2}}$ to $\pi(G)''$  is a normal
trace $\tau'$ on $\pi(G)''.$ Hence, by the result recalled above, 
$\tau'= \tau_S$ for  a unique $S\in \pi(G)'\cap \pi(G)''$ with $0\leq S\leq I.$
This shows that $\vfi_{T^{1/2}}=\vfi_{S^{1/2}}$. Since $T$ and $S$ belong
both to $\pi(G)',$ it follows that $T=S.$ So, $T\in \pi(G)'\cap \pi(G)''$.
Therefore, $\M_\vfi'$ is contained in $\pi(G)'\cap \pi(G)''.$
  \end{proof}
  
  The following  result, which  will be crucial in the sequel,
appears  in \cite[Lemma 14]{Thoma1};  the proof we give here for it is shorter and more transparent than the original one.
\begin{theorem}
\label{Prop-GNS-InnerAut}
Let $G$ be a discrete group, $N$ a normal subgroup of $G$
and $\psi\in \Char(G).$ Then $\psi|_N\in \Char(N, G).$
\end{theorem}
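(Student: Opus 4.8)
The plan is to use the von Neumann algebra characterization from Corollary~\ref{Pro-GNS-Trace-bis}: a trace $\vfi$ on a group is a character precisely when the von Neumann algebra generated by the GNS representation is a factor. So let $(\pi, \H, \xi)$ be a GNS triple associated to $\psi\in\Char(G)$, and let $g\mapsto U_g$ be the unitary representation of $G$ on $\H$ implementing the conjugation action (here, since $N$ is normal in $G$ and $\psi$ is $G$-invariant for the conjugation action of $G$ on itself, in particular $\psi|_N$ is $G$-invariant for the conjugation action of $G$ on $N$; moreover one can take $U_g = \pi(g)$). Restricting $\pi$ to $N$ gives a cyclic representation $\rho := \pi|_N$ with the same cyclic vector $\xi$, so $(\rho, \H, \xi)$ is a GNS triple for $\psi|_N$. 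By Corollary~\ref{Pro-GNS-Trace-bis}(i) applied to $N$ (or rather the $N$-version of it, noting $\langle\,\cdot\,\xi,\xi\rangle$ is still a normal faithful trace on $\rho(N)''$ since $\xi$ is cyclic for $\rho$), and by Proposition~\ref{Pro-GNS-VN}, what must be shown is that $\M_{\psi|_N} := (\rho(N)\cup\{U_g\mid g\in G\})''$ has trivial commutant, i.e.\ that $(\pi(N)\cup\pi(G))'= \CC I$.

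The key observation is that $(\pi(N)\cup\pi(G))' = \pi(G)'$, because $\pi(N)\subset \pi(G)$. Hence $\M_{\psi|_N}' = \pi(G)'$. Now, since $\psi\in\Char(G)$, Corollary~\ref{Pro-GNS-Trace-bis}(ii) tells us that $\pi(G)''$ is a factor, i.e.\ $\pi(G)''\cap\pi(G)' = \CC I$. This is not quite $\pi(G)' = \CC I$; the gap is exactly the difference between the commutant being trivial and the center being trivial. To close this gap I would invoke the trace: the functional $\tau: T\mapsto\langle T\xi,\xi\rangle$ is a normal faithful \emph{trace} on $\pi(G)''$, which forces $\xi$ to be not merely cyclic but also \emph{separating} for $\pi(G)''$ (faithfulness of the trace gives that $T\xi = 0 \Rightarrow T = 0$ for $T\in\pi(G)''$, hence separating). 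A cyclic-and-separating vector means $\xi$ is cyclic for the commutant $\pi(G)'$ as well. Then the standard fact (via Proposition~\ref{Pro-GNS-VN} applied with the roles examined carefully, or directly) is that since $\xi$ is $U_g$-fixed and cyclic, any $T\in\pi(G)'$ is determined by $T\xi$, and the $G$-invariance forces the associated positive-type function on $N$ to come from the center.

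Let me reorganize the last step more cleanly, as that is where the main obstacle lies. The cleanest route: by Proposition~\ref{Pro-GNS-VN} applied to the group $N$ with the acting group $G$, the map $T\mapsto \psi_T$, where $\psi_T(\gamma) = \langle\pi(\gamma)T\xi, T\xi\rangle$, is a bijection between $\{T\in\M_{\psi|_N}'\mid 0\le T\le I\}$ and the $G$-invariant positive-type functions on $N$ dominated by $\psi|_N$. Given $T\in\M_{\psi|_N}' = \pi(G)'$ with $0\le T\le I$, the function $\psi_{T^{1/2}}$ extends canonically to a normal trace on $\pi(G)''$ (here one checks, exactly as in the proof of Corollary~\ref{Pro-GNS-Trace-bis}(ii), that $G$-invariance of $\psi_{T^{1/2}}$ on $N$ plus the fact that $\pi(N)\xi$ spans a dense subspace together with the trace property on all of $\pi(G)''$ — actually the trace property on $\pi(G)''$ requires $T$ to commute with $\pi(G)$, which it does — gives a genuine trace). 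Since $\pi(G)''$ is a factor, its only normal traces dominated by $\tau$ are scalar multiples of $\tau$, so $\psi_{T^{1/2}} = c\,\psi|_N$ for some $c\ge 0$, whence $T = cI$ by injectivity. Therefore $\M_{\psi|_N}' = \CC I$, and by Proposition~\ref{Pro-GNS-VN}, $\psi|_N\in\Char(N,G)$.

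The main obstacle is precisely the passage from "$\pi(G)''$ is a factor" (center trivial) to "$\pi(G)'$ is trivial as a commutant of the \emph{larger} set $\pi(N)\cup\pi(G)$" — and the resolution is that these are the same commutant, so one is really arguing that a trace on $N$ dominated by $\psi|_N$ and $G$-invariant extends to a normal trace on the factor $\pi(G)''$ and is hence proportional to $\psi|_N$; the technical care needed is in verifying that the extension really is tracial on all of $\pi(G)''$, not just on $\pi(N)''$, which is where one uses that the implementing operators $U_g$ are $\pi(g)$ themselves.
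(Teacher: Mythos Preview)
There is a genuine gap. Your identification $U_g = \pi(g)$ is incorrect: the representation $g\mapsto U_g$ from Proposition~\ref{Pro-GNS-Trace} is singled out by the two conditions $U_g\pi(\cdot)U_g^{-1}=\pi(g\cdot g^{-1})$ \emph{and} $U_g\xi=\xi$, and $\pi(g)$ fails the second one (for $\psi=\delta_e$ on an ICC group one finds $U_g=\lambda(g)\rho(g)$, not $\lambda(g)$). Relatedly, $\xi$ is in general not cyclic for $\pi|_N$ on $\H$; the GNS space for $\psi|_N$ is the closure $\K$ of $\pi(N)\xi$, and $\pi(g)$ need not even preserve $\K$. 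Consequently your equality $\M_{\psi|_N}'=\pi(G)'$ is wrong: $\pi(G)'$ is typically far from scalar even when $\pi(G)''$ is a factor (in the tracial setting they are anti-isomorphic via the modular conjugation).

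Your recovery attempt---that for $T\in\pi(G)'$ the functional $S\mapsto\langle ST\xi,\xi\rangle$ is a trace on $\pi(G)''$---is also false. With $G=N=\Gamma$ an ICC group and $\psi=\delta_e$, one has $\pi(G)''=L(\Gamma)$, $\pi(G)'=R(\Gamma)$, $\xi=\delta_e$; taking $T=\tfrac14(\rho(\gamma_0)+\rho(\gamma_0^{-1})+2I)$ with $\gamma_0\neq e$ gives a functional $g\mapsto\langle\lambda(g)T\xi,\xi\rangle$ supported on $\{e,\gamma_0,\gamma_0^{-1}\}$, which is not a class function. The paper's proof follows exactly your intended strategy but executes it correctly: it works on $\K$ with the genuine $U_g$ fixing $\xi$, so that for $T\in\M_{\psi|_N}'\subset\L(\K)$ one has $U_{g^{-1}}T\xi=TU_{g^{-1}}\xi=T\xi$, and \emph{this} equality is what makes $S\mapsto\langle ST\xi,T\xi\rangle$ a trace on $\pi(G)''$.
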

\begin{proof}
Let $(\pi, \H, \xi)$ be  GNS-triple associated to $\psi.$
Set $\vfi:=\psi|_N$ and let $\K$ be the closed linear span of $\{\pi(x)\xi\mid x\in N\}.$
Then $(\pi|_N, \K, \xi)$ is a   GNS-triple associated to $\vfi.$

Let $g\mapsto U_g$ be the representation of $G$ on $\H$ associated
to $\psi$ as in  Proposition~\ref{Pro-GNS-Trace}.
The subspace $\K$ is invariant under 
$U_g$ for $g\in G$, since $U_g \pi(x) U_{g}^{-1}=\pi(g xg^{-1})$ and $U_g\xi=\xi.$
So, the representation of $G$ on $\K$ associated to $\vfi$
is $g\mapsto U_g|_{\K}.$ 
Let $\M_\vfi'$ be the von Neumann subalgebra
of $\L(\K)$ generated by  
$$\{\pi(x)|_{\K}\mid x \in N\} \cup \{U_g|_{\K}\mid g\in G\}.$$

In view of Proposition~\ref{Pro-GNS-VN},  it suffices to show  that
$\M_\vfi'=\CC I.$  
Let   $T\in \M_\vfi'$ with $0\leq T\leq I.$ 
Consider the linear functional
$\tau'$ on $\pi(G)''$ given by 
$$\tau' (S)= \langle S T\xi, T\xi\rangle \tout S\in \pi(G)''.$$
 We claim that $\tau'$ is a normal trace on 
$\pi(G)''.$ Indeed, it is clear that $\tau$ is normal; moreover, for $g,h\in G,$ we have
$$
\begin{aligned}
\tau'(ghg^{-1})&= \langle \pi(ghg^{-1}) T\xi, T\xi \rangle\\
&=\langle U_g\pi(h) U_{g^{-1}}T\xi, T\xi \rangle\\
&=\langle  \pi(h) TU_{g^{-1}}\xi, U_{g^{-1}}T\xi \rangle\\
&=\langle  \pi(h) TU_{g^{-1}}\xi, TU_{g^{-1}}\xi \rangle\\
&=\langle  \pi(h) T\xi, T\xi \rangle= \tau' (h).
\end{aligned}
$$
Let $\tau'': = \tau+ \tau',$ where 
$\tau$ is the faithful trace on $\pi(G)''$ defined by $\vfi,$ as in Corollary~\ref{Pro-GNS-Trace-bis}.
Then $\tau''$ is a normal faithful trace on $\pi(G)''$ and $\tau''$ dominates $\tau$ and $\tau'.$
Since $\pi(G)''$ is a factor, it follows that $\tau$ and $\tau'$ are both proportional
to $\tau''.$ Hence, there exists $\la\geq 0$ such that $\tau'= \la \tau.$ So,
 $$\langle  \pi(x) T\xi, T\xi \rangle= \langle  \pi(x) \sqrt{\la} \xi, \sqrt{\la} \xi \rangle \tout x\in N.$$
 Since $T\in \{\pi(x)|_{\K}\mid x\in N\}'$ and $0\leq T\leq I,$ it follows that $T=\sqrt{\la} I_{\K}.$
 
\end{proof}

As we now show,   the set of characters of a product group admits a simple description;
again, this is  a  result  due to Thoma (\cite[Satz 4]{Thoma2}) for which we provide
a short proof.

For sets $X_1, \dots, X_r$ and functions $\vfi_i: X_i\to \CC, i\in\{1,\dots, r\},$
we denote by $ \vfi_1\otimes \cdots \otimes\vfi_r$ the function 
on $X_1\times \dots \times X_r$ given by 
$$
 \vfi_1\otimes \cdots \otimes\vfi_r (x_1, \dots, x_r) =  \vfi_1(x_1)\cdots \vfi_r (x_r), 
 $$
 for all $(x_1,\dots, x_r)  \in X_1\times \dots \times X_r.$
\begin{theorem} 
\label{Prop-CharProductGroup}
Let $G_1, G_2$ be discrete groups.
Then 
$$\Char(G_1\times G_2)=\left\{ \vfi_1\otimes \vfi_2\mid \vfi_1\in \Char(G_1), \vfi_2\in \Char(G_2)\right\}.
$$
\end{theorem}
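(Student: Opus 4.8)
The plan is to prove the two inclusions separately, using the von Neumann algebra machinery developed in this section, in particular Corollary~\ref{Pro-GNS-Trace-bis}, which identifies characters with traces whose GNS construction yields a factor.

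For the inclusion $\supseteq$, suppose $\vfi_i\in\Char(G_i)$ with GNS-triple $(\pi_i,\H_i,\xi_i)$ for $i=1,2$. Then $(\pi_1\otimes\pi_2, \H_1\otimes\H_2,\xi_1\otimes\xi_2)$ is a GNS-triple for $\vfi_1\otimes\vfi_2$, since $\langle(\pi_1\otimes\pi_2)(g_1,g_2)(\xi_1\otimes\xi_2),\xi_1\otimes\xi_2\rangle = \vfi_1(g_1)\vfi_2(g_2)$ and the vector $\xi_1\otimes\xi_2$ is cyclic. By Corollary~\ref{Pro-GNS-Trace-bis}(ii), $\vfi_i\in\Char(G_i)$ means $\pi_i(G_i)''$ is a factor. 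Since $(\pi_1\otimes\pi_2)(G_1\times G_2)'' = (\pi_1(G_1)\otimes I)''\, \vee\, (I\otimes\pi_2(G_2))'' = \pi_1(G_1)''\overline{\otimes}\pi_2(G_2)''$, and the tensor product of two factors is a factor (by Tomita's commutation theorem, $(\pi_1(G_1)''\overline\otimes\pi_2(G_2)'')' = \pi_1(G_1)'\overline\otimes\pi_2(G_2)'$, so the center is $\CC I\otimes\CC I = \CC I$), we conclude $\vfi_1\otimes\vfi_2\in\Char(G_1\times G_2)$.

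For the inclusion $\subseteq$, let $\vfi\in\Char(G_1\times G_2)$ and view $N:=G_1\times\{e\}$ as a normal subgroup of $G:=G_1\times G_2$. By Theorem~\ref{Prop-GNS-InnerAut}, $\vfi|_N\in\Char(N,G)$, and since $G_2$ acts trivially on $N$ by conjugation, $\Char(N,G)=\Char(N)=\Char(G_1)$; set $\vfi_1:=\vfi|_{G_1\times\{e\}}$, similarly $\vfi_2:=\vfi|_{\{e\}\times G_2}$. It remains to show $\vfi = \vfi_1\otimes\vfi_2$. Let $(\pi,\H,\xi)$ be the GNS-triple for $\vfi$; restricting $\pi$ to $G_1\times\{e\}$ and to $\{e\}\times G_2$ gives two commuting subalgebras $M_1:=\pi(G_1\times\{e\})''$ and $M_2:=\pi(\{e\}\times G_2)''$ of the factor $M:=\pi(G)'' = M_1\vee M_2$. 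The key point is that the restriction of the trace $\tau=\langle\cdot\,\xi,\xi\rangle$ to $M_1$ is still a trace (as $M_1$ is generated by $\pi(G_1\times\{e\})$ and $\tau$ is central on all of $\pi(G)$), and similarly for $M_2$; moreover $\tau|_{M_i}$ is faithful on $M_i$ after cutting down to the cyclic subspace, so each $M_i$ is a finite von Neumann algebra with a trace.

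The main obstacle — and the step requiring the most care — is establishing the factorization $\vfi(g_1,g_2)=\vfi_1(g_1)\vfi_2(g_2)$, i.e. that the commuting pair $(M_1,M_2)$ inside the factor $M$ sits in "tensor product position" with respect to the trace $\tau$. The cleanest route is: since $M$ is a factor generated by the commuting subfactors-with-trace $M_1,M_2$, one shows $M_1\cap M_1' \cap M = \CC I$ forces $M_1$ to be a factor (and likewise $M_2$), because the center of $M_1$ lies in $M_1'\cap M \supseteq M_2$, hence in $M_1\vee M_2 = M$ too, so in $Z(M)=\CC I$ — wait, more precisely $Z(M_1)\subseteq M_1'\cap(M_1\vee M_2)$; one uses that $Z(M_1)$ commutes with both $M_1$ and $M_2$, hence with $M$, hence is $\CC I$. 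Thus $M_1,M_2$ are factors. Then by the standard result on commuting factors with a common trace (the ``tensor splitting'' lemma: $\tau(ab)=\tau(a)\tau(b)$ for $a\in M_1$, $b\in M_2$ — which follows since $M\ni b\mapsto \tau(ab)$ for fixed $a$ with $\tau(a)=0$ gives a $\tau$-absolutely-continuous trace on $M_2$, hence a scalar multiple $c(a)\tau|_{M_2}$ of the trace, and tracing the defining relation pins $c(a)=0$), we get $\vfi(g_1,g_2)=\tau(\pi(g_1,e)\pi(e,g_2))=\tau(\pi(g_1,e))\tau(\pi(e,g_2))=\vfi_1(g_1)\vfi_2(g_2)$, completing the proof. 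I would present the argument in exactly this order, flagging the ``commuting factors with a trace split as a tensor product'' fact as the technical heart, with a reference to the relevant statement in \cite{DixmierVN}.
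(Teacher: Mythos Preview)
Your proof is correct and follows the same overall strategy as the paper: use Corollary~\ref{Pro-GNS-Trace-bis} to reduce both directions to statements about factors, show for the hard inclusion that $M_1,M_2$ are factors by observing $Z(M_i)\subset Z(M)=\CC I$, and then split the trace. Two differences are worth noting. For the inclusion $\supseteq$ you invoke Tomita's commutation theorem to see that a tensor product of factors is a factor; the paper avoids this and gives an elementary argument instead, showing that the von Neumann algebra generated by $\pi(G)''\cup\pi(G)'$ contains $\L(\H_1)\overline\otimes\L(\H_2)=\L(\H)$ (using only that each $\pi_i(G_i)''\cup\pi_i(G_i)'$ generates $\L(\H_i)$, which is the factor condition again). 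For the inclusion $\subseteq$, your appeal to Theorem~\ref{Prop-GNS-InnerAut} to get $\vfi_i\in\Char(G_i)$ is correct but redundant: once you prove $M_1,M_2$ are factors (which you do anyway, and which the paper does directly), the fact that $\vfi_i$ is a character follows, and the trace-splitting argument you sketch is exactly the paper's (with the roles of $M_1$ and $M_2$ swapped). Your parenthetical ``$\tau$-absolutely-continuous, hence a scalar multiple'' is fine but relies on knowing that normal tracial functionals on a finite factor are one-dimensional; the paper phrases this via domination by the faithful trace and cites \cite{DixmierVN}, which is slightly more self-contained.
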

\begin{proof}
Set $G:=G_1\times G_2.$ 

For $i=1,2,$ let $\vfi_i\in \Char(G_i).$ We claim that  
$$\vfi:=\vfi_1 \otimes \vfi_2\in \Char(G).$$ 
Indeed, let $(\pi_i, \H_i, \xi_i)$ be a GNS triple associated to $\vfi_i.$
Then $(\pi, \H , \xi)$ is   a GNS triple associated to $\vfi,$ where $\pi$ is the tensor product representation $\pi_1\otimes \pi_2$
on $\H:=\H_1\otimes \H_2$ and $\xi:=\xi_1\otimes \xi_2.$
In view of Proposition~\ref{Pro-GNS-Trace-bis}, we have to show that 
$\pi(G)''$ is a factor. For this, it suffices to show that  the von Neumann algebra 
$\M$ generated by $\pi(G)''\cup \pi(G)'$ coincides with $\L(\H).$

On the one hand,  $\pi(G)''$ contains $\pi_1(G_1)''\otimes I$ and $I\otimes \pi_2(G_2)''$, and $\pi(G)'$ contains $\pi_1(G_1)'\otimes I$ and $I\otimes \pi(G_2)'$; hence,
$\M$ contains $\M_1\otimes \M_2,$ where $\M_i$ is the von Neumann algebra
generated by $\pi_i(G_i)'' \cup \pi_i(G_i)'.$ 
On the other hand, since $\vfi_i\in \Char(G_i),$
we have $\M_i=\L(\H_i).$  So, $\M$ contains the von Neumann algebra generated by 
$\{T_1\otimes T_2\mid T_1\in \L(\H_1), T_2\in \L(\H_2)\},$ which is $\L(\H).$

Conversely, let $\vfi \in \Char(G).$ Let  $(\pi, \H , \xi)$ be a GNS triple associated to $\vfi.$ By Proposition~\ref{Pro-GNS-Trace-bis}, $\M:=\pi(G)''$ is a factor.

For $i=1,2,$ set $\M_i:= \pi(G_i)''$, where we identify  $G_i$ with the 
subgroup $G_i\times\{e\}$ of $G.$
We claim that $\M_1$ and $\M_2$ are factors. Indeed, 
since $\M_1\subset \M_2'$,  the center $\M_1\cap \M_1'$ of $\M_1$ is contained
in $\M_2'\cap \M_1'$. As $\M_1\cup \M_2$ generate $\M,$ it follows
that $\M_1\cap \M_1'$ is contained in $\M'$ and so in $\M\cap \M'.$
Hence, $\M_1\cap \M_1'=\CC I,$ since $\M$ is a factor.
So, $\M_1$ and, similarly, $\M_2$  are factors.

Next, recall (Corollary~\ref{Pro-GNS-Trace-bis}) that $\M$ has a normal faithful trace $\tau$ given by $\tau(T)= \langle T\xi, \xi\rangle$ for $T\in \M.$
The restriction $\tau^{(1)}$ of $\tau$ to $\M_1$ is a normal faithful trace on $\M_1.$

Let $T_2\in \M_2$ with $0\leq T_2 \leq I$ and $T_2\neq 0.$
Define  a positive and normal linear functional $\tau^{(1)}_{T_2}$ on $\M_1$
by 
$$
\tau^{(1)}_{T_2}(S)= \tau (ST_2) \tout  S\in \M_1.
$$
For $S,T\in \M_1,$ we have
$$
\begin{aligned}
\tau^{(1)}_{T_2}(ST)= \tau (STT_2)= \tau (S T_2 T)
= \tau ((S T_2) T)= \tau(T (ST_2)) 
= \tau^{(1)}_{T_2}(TS).
\end{aligned}
$$
So, $\tau^{(1)}_{T_2}$ is a normal  trace on $\M_1.$
Clearly,  $\tau^{(1)}_{T_2}$ is dominated by $\tau^{(1)}.$ 
Since $\M_1$ is a factor, it follows from the result quoted 
before Corollary~\ref{Pro-GNS-Trace-bis} that 
there exists a scalar $\la(T_2)\geq 0$ such that $\tau^{(1)}_{T_2}=\la(T_2) \tau^{(1)}$, that is,
$$
\tau (T_1 T_2)= \la(T_2) \tau(T_1) \tout T_1\in \M_1.
$$
Taking $T_1=I,$ we see that $\la(T_2)= \tau(T_2).$
It follows that 
$$
\tau (T_1 T_2)= \tau(T_1) \tau(T_2) \tout T_1\in \M_1, T_2\in \M_2
$$
and in particular $\vfi= \vfi_1\otimes \vfi_2,$ for
$\vfi_i= \vfi|_{G_i}.$

\end{proof}

 The following result is an immediate consequence of Proposition~\ref{Prop-CharProductGroup}.

 Recall (see Proposition~\ref{Pro-NormalSubgroup}) that, when $N$ is a normal subgroup of a group $G,$ we can identify
 $\Char(G/N)$ with the subset $\{ \vfi \in \Char(G)\mid \vfi|_N=1\}$
 of $\Char(G)$.
 
\begin{corollary}
  \label{Cor-CharProduct}
 For discrete groups $G, G_1, \dots, G_r$, let 
 $$p: G_1\times \dots \times G_r\to G$$ be a surjective homomorphism. 
  Then 
  $$
  \Char(G)=\left\{ \vfi=\vfi_1\otimes \cdots \otimes\vfi_r\mid \vfi|_N=1 \, \text{and}\, 
  \vfi_i \in \Char(G_i), i=1,\dots, r\right\},
 $$
 where $N$ is the kernel of $p.$
  In particular, for  $\vfi\in \Char(G),$ we have
  $$
  \vfi(g_1 \dots g_n)= \vfi(g_1)\dots \vfi(g_n)
 $$
 for all $g_i\in p( \{e\}\times \cdots\times G_i\times \cdots, \times \{e\}).$
 \end{corollary}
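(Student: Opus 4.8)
The plan is to deduce the corollary directly from Theorem~\ref{Prop-CharProductGroup} together with Proposition~\ref{Pro-NormalSubgroup}. Write $\widetilde{G}:=G_1\times\cdots\times G_r$ and let $q\colon\widetilde{G}\to\widetilde{G}/N$ be the canonical projection, so that $p$ induces an isomorphism $\bar p\colon\widetilde{G}/N\cong G$ and $\vfi\mapsto\vfi\circ p=(\vfi\circ\bar p)\circ q$ identifies functions on $G$ with functions on $\widetilde{G}$ that factor through $q$. First I would upgrade Theorem~\ref{Prop-CharProductGroup} from two to $r$ factors by an obvious induction on $r$: grouping $\widetilde{G}=G_1\times(G_2\times\cdots\times G_r)$, one application of the theorem gives $\Char(\widetilde{G})=\{\vfi_1\otimes\psi\mid\vfi_1\in\Char(G_1),\ \psi\in\Char(G_2\times\cdots\times G_r)\}$, and the inductive hypothesis expands $\psi$ as $\vfi_2\otimes\cdots\otimes\vfi_r$. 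This yields $\Char(\widetilde{G})=\{\vfi_1\otimes\cdots\otimes\vfi_r\mid\vfi_i\in\Char(G_i),\ i=1,\dots,r\}$.

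Next I would invoke Proposition~\ref{Pro-NormalSubgroup}, applied with $\Ga=\widetilde{G}$ and $G=\widetilde{G}$ acting on itself by conjugation, and with the normal subgroup $N$: its parts (ii) and (iv), together with the surjectivity of $q$ (which forces injectivity of $\psi\mapsto\psi\circ q$), show that $\vfi\mapsto\vfi\circ q$ is a bijection from $\Char(\widetilde{G}/N)$ onto $\{\psi\in\Char(\widetilde{G})\mid\psi|_N=1_N\}$. Composing with the isomorphism $\bar p$ gives a bijection from $\Char(G)$ onto $\{\psi\in\Char(\widetilde{G})\mid\psi|_N=1_N\}$. Combining this with the description of $\Char(\widetilde{G})$ from the previous paragraph, and identifying (as the statement implicitly does) a character $\vfi$ of $G$ with its pullback $\vfi\circ p\in\Char(\widetilde{G})$, gives exactly $\Char(G)=\{\vfi_1\otimes\cdots\otimes\vfi_r\mid\vfi_i\in\Char(G_i),\ (\vfi_1\otimes\cdots\otimes\vfi_r)|_N=1_N\}$, which is the asserted formula.

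For the final displayed identity, fix $\vfi\in\Char(G)$ and write $\vfi\circ p=\vfi_1\otimes\cdots\otimes\vfi_r$ with $\vfi_i\in\Char(G_i)$. Let $g_1,\dots,g_n$ (with $n\le r$) be elements such that $g_i\in p(\{e\}\times\cdots\times G_i\times\cdots\times\{e\})$, and choose $h_i\in G_i$ with $g_i=p(e,\dots,h_i,\dots,e)$. Since the chosen preimages have disjoint support in $\widetilde{G}$, hence commute, we get $g_1\cdots g_n=p(h_1,\dots,h_n,e,\dots,e)$, so $\vfi(g_1\cdots g_n)=(\vfi\circ p)(h_1,\dots,h_n,e,\dots,e)=\vfi_1(h_1)\cdots\vfi_n(h_n)$, where we used $\vfi_j(e)=1$ for $j>n$. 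On the other hand, $\vfi(g_i)=(\vfi\circ p)(e,\dots,h_i,\dots,e)=\vfi_i(h_i)$, again by normalization of the $\vfi_j$. Therefore $\vfi(g_1\cdots g_n)=\vfi(g_1)\cdots\vfi(g_n)$.

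There is no serious obstacle here: once the two cited results are in hand the argument is pure bookkeeping. The only points that demand a little care are the passage from two to $r$ factors in Theorem~\ref{Prop-CharProductGroup} (a routine induction) and keeping track of the identification in Proposition~\ref{Pro-NormalSubgroup} between characters of $G\cong\widetilde{G}/N$ and characters of $\widetilde{G}$ trivial on $N$, which is what legitimizes writing a character of $G$ as a tensor product $\vfi_1\otimes\cdots\otimes\vfi_r$ in the first place.
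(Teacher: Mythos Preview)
Your proposal is correct and follows exactly the route the paper intends: the paper simply declares the corollary an immediate consequence of Theorem~\ref{Prop-CharProductGroup} (together with Proposition~\ref{Pro-NormalSubgroup}), and you have spelled out precisely that deduction, including the routine induction from two to $r$ factors and the identification of $\Char(G)$ with the characters of $\widetilde{G}$ trivial on $N$.
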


 \section{Traces on unipotent groups}
\label{S:CharUnip}
In this section, we will show that traces on a unipotent algebraic group $U$
are in a one-to-one correspondence with $\Ad(U)$-invariant positive definite functions on
the Lie algebra of $U.$
\subsection{Invariant traces on abelian groups}
\label{SS:TracesAbelianGroups}
Let $A$ be a discrete abelian  group
 and $\widehat{A}$ the Pontrjagin dual of $A,$ which is a compact abelian group. 
 Then $\Tr(A)$ is the set of normalized functions of positive type on $A$
 and $\Char(A)= \widehat{A}$.
 
  Let $\Prob(\widehat{A})$ denote the 
set of regular probability measures on the Borel subsets of $\widehat{A}$.
For $\mu\in \Prob(\widehat A),$ the Fourier--Stieltjes transform  $\mathcal{F}(\mu):A\to \CC$
of $\mu$ is given by
$$
\mathcal{F}(\mu)(a)= \int_{\widehat{A}}\chi(a) d\mu(\chi)
\tout
\chi \in \widehat A.
$$
By Bochner's theorem (see e.g. \cite[\S 33]{HewittRoss2}),
 the map $\mathcal{F}: \mu\mapsto \mathcal{F}(\mu)$ is a bijection between 
$\Prob(\widehat{A})$ and $\Tr(A)$.

Let $G$ be a group acting by automorphisms on $A.$
Then $G$   acts by continuous automorphisms on 
$\Tr(A)$ and on $\widehat{A}=\Char(A)$, via the dual action
given by 
$$\vfi^g(a)=\vfi(g^{-1}(a)) \tout \chi\in \Tr(A), \, g\in G, \, a\in A.$$
Let $(g,\mu)\mapsto g_*(\mu)$ be the induced action of $G$ on $\Prob(\widehat{A});$
so, $g_*(\mu)$ is the image of $\mu\in \Prob(\widehat{A})$ under  
the map $\chi\mapsto \chi^g.$ 

Let $\Prob(\widehat{A})^G$  be the subset of $\Prob(\widehat{A})$
consisting of $G$-invariant probability measures and denote
by $\Prob(\widehat{A})^G_{\rm erg}$ the measures in $\Prob(\widehat{A})^G$ 
which are ergodic. 

\begin{proposition}
\label{Prop-Abelian}
Let $A$ be a discrete abelian group and $G$ a group acting by automorphisms
on $A.$ 
The Fourier-Stieltjes transform  $\mathcal{F}$ restricts
to bijections $\mathcal{F}: \Prob(\widehat{A})^G \to \Tr(A, G)$ and 
$\mathcal{F}: \Prob(\widehat{A})^G_{\rm erg} \to \Char(A, G).$ 
\end{proposition}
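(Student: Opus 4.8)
The plan is to exploit that the Fourier--Stieltjes transform $\mathcal{F}$ is already known (Bochner's theorem, as recalled just above) to be a bijection between $\Prob(\widehat{A})$ and $\Tr(A)$, and to check that it is $G$-equivariant, so that it restricts to a bijection between the $G$-fixed points on each side. First I would compute, for $\mu\in\Prob(\widehat{A})$ and $g\in G$, the transform of the pushforward $g_*(\mu)$:
\begin{equation*}
\mathcal{F}(g_*(\mu))(a)=\int_{\widehat A}\chi(a)\,d(g_*\mu)(\chi)=\int_{\widehat A}\chi^g(a)\,d\mu(\chi)=\int_{\widehat A}\chi(g^{-1}(a))\,d\mu(\chi)=\mathcal{F}(\mu)(g^{-1}(a))=\mathcal{F}(\mu)^g(a).
\end{equation*}
Hence $\mathcal{F}(g_*\mu)=\mathcal{F}(\mu)^g$, i.e. $\mathcal{F}$ intertwines the $G$-action on $\Prob(\widehat A)$ with the dual $G$-action on $\Tr(A)$. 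Since a bijection that is equivariant restricts to a bijection between the respective sets of fixed points, this immediately gives $\mathcal{F}\colon \Prob(\widehat A)^G\to\Tr(A,G)$ is a bijection (noting that $\Tr(A,G)$ is by definition the set of $G$-fixed points of $\Tr(A)$, since for abelian $A$ every normalized function of positive type is already ``central'' trivially).

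For the second statement I would pass to extreme points. On the measure side, it is a standard fact from ergodic theory that the ergodic measures in $\Prob(\widehat A)^G$ are exactly the extreme points of the (convex, weak\nobreakdash-$*$ compact) set $\Prob(\widehat A)^G$. On the trace side, $\Char(A,G)$ is by definition the set of extreme points of $\Tr(A,G)$. Since $\mathcal{F}$ is an affine homeomorphism (it is affine because integration is linear in $\mu$, and it is a homeomorphism for the weak\nobreakdash-$*$ topologies on both sides — continuity of $\mu\mapsto\mathcal{F}(\mu)(a)$ for each fixed $a$ gives continuity into the topology of pointwise convergence, and a continuous bijection between compact Hausdorff spaces is a homeomorphism), it carries extreme points to extreme points in both directions. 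Therefore $\mathcal{F}$ restricts to a bijection $\Prob(\widehat A)^G_{\mathrm{erg}}\to\Char(A,G)$.

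The only point requiring a little care — and the one I would single out as the main thing to get right rather than the main obstacle, since nothing here is genuinely hard — is the identification of ergodic invariant measures with extreme points of $\Prob(\widehat A)^G$. If $\mu$ is not ergodic there is a $G$-invariant Borel set $B$ with $0<\mu(B)<1$, and then $\mu=\mu(B)\,\mu_B+\mu(B^c)\,\mu_{B^c}$ with $\mu_B,\mu_{B^c}$ distinct $G$-invariant probability measures, so $\mu$ is not extreme; conversely if $\mu=t\mu_1+(1-t)\mu_2$ with $t\in(0,1)$ and $\mu_1\neq\mu_2$ both $G$-invariant, then $\mu_1\ll\mu$ and $\tfrac{d\mu_1}{d\mu}$ is a non-constant $G$-invariant function in $L^1(\mu)$, contradicting ergodicity. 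This is classical; alternatively one may simply cite it. With equivariance, affineness and this extreme-point dictionary in hand, the proposition follows.
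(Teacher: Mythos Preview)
Your proof is correct and follows exactly the same approach as the paper's: the paper's proof is a two-sentence sketch observing that $\mathcal{F}$ is affine and $G$-equivariant and that ergodic measures are the extreme points of $\Prob(\widehat{A})^G$, and you have simply spelled out these points in detail. One small remark: the homeomorphism argument is not strictly needed, since an affine \emph{bijection} between convex sets already preserves extreme points in both directions purely algebraically.
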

\begin{proof}
The claims follow from the fact that 
 $\mathcal{F}: \Prob(\widehat{A}) \to \Tr(A)$  is  an affine $G$-equivariant  map
 and that $\Prob(\widehat{A})^G_{\rm erg}$ is the set of extreme
points in the convex compact set $\Prob(\widehat{A})^G$. 
\end{proof}

\subsection{Invariant traces on unipotent groups}
\label{SS:TracesNipotentGroups}
Let $k$ be a field of characteristic $0$.  Let  $U_n$ be the group of upper triangular unipotent $n\times n$ matrices over $k,$ for $n\geq 1.$
Then $U_n$ is  the group of $k$-points of an algebraic group over $k$ and 
its  Lie algebra is the Lie algebra
 $\mathfrak{u}_n$ of the strictly upper triangular matrices.
 The  exponential map $\exp: \mathfrak{u}_n\to U_n$ is a bijection and,
 by the Campbell-Hausdorff formula, there exists a polynomial map 
 $P: \mathfrak{u}_n\times \mathfrak{u}_n\to \mathfrak{u}_n$ with coefficients
 in $k$ such that 
$\exp(X)\exp(Y)= \exp(P(X, Y))$ for all $X,Y\in \mathfrak{u}_n$.
Denote by $\log: U_n\to \mathfrak{u}_n$ the inverse map of $\exp.$

 Let  $\mathfrak{u}$ be a  nilpotent Lie algebra over $k.$
Then, by the theorems of Ado and Engel, $\mathfrak{u}$ can be viewed as
Lie subalgebra of $\mathfrak{u}_n$ for some $n\geq 1$ and  $\exp(\mathfrak{u})$ is an algebraic subgroup of $U_n$.

Let $U$ be the group of $k$-points of a  unipotent algebraic  group over $k,$
that is, an algebraic subgroup of $U_n$ for some $n\geq 1.$
Then $\mathfrak{u}=\log(U)$ is a Lie subalgebra of $\mathfrak{u}_n$
and $\exp: \mathfrak{u}\to U$ is a bijection  (for all this, see \cite[Chap.14]{Milne}).

 For every $u\in U,$ the automorphism of $U$ given by conjugation with 
$u$ induces an automorphism $\Ad (u)$ of the Lie algebra $\mathfrak{u}$
determined by  the property 
$$
\exp(\Ad(u)(X))= u\exp(X) u^{-1} \tout X\in \mathfrak{u}.
$$
Observe that a function $\vfi$ on $U$ is central (that is, constant on  the $U$ conjugacy classes) if and only if the corresponding function $\vfi\circ \exp$ on $\mathfrak{u}$
is $\Ad(U)$-invariant. 

The following  proposition  will be a crucial tool in our  proof of Theorem~\ref{Theo-GenAlgGroup} 
\begin{proposition}
\label{Pro-TracesNilpotent}
Let $U$ be the group of $k$-points of a  unipotent algebraic  group over a field $k$ of characteristic zero. Let $\vfi:U\to \CC.$ Then 
$\vfi\in \Tr(U)$ if and only if $\vfi\circ \exp\in \Tr(\mathfrak{u}, \Ad(U)).$
So, the map   $\vfi\mapsto \vfi\circ\exp$ is a bijection between 
$\Tr(U)$ and $\Tr(\mathfrak{u}, \Ad(U)).$
\end{proposition}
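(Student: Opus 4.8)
The plan is to show that the bijection $\exp:\mathfrak{u}\to U$ transports traces to $\Ad(U)$-invariant traces, and conversely. The invariance and normalization conditions are immediate: $\vfi(e)=1$ is the same as $(\vfi\circ\exp)(0)=1$, and by the remark just before the proposition, $\vfi$ is central on $U$ if and only if $\vfi\circ\exp$ is $\Ad(U)$-invariant on $\mathfrak{u}$. So the entire content is the equivalence
\[
\text{$\vfi$ is of positive type on the group $U$}
\iff
\text{$\vfi\circ\exp$ is of positive type on the abelian group $(\mathfrak{u},+)$.}
\]
Note the two notions of ``positive type'' refer to genuinely different group structures — the nonabelian multiplication on $U$ versus vector addition on $\mathfrak{u}$ — so this is not a formality; it is the heart of the statement.

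The key idea is to interpolate between the two group structures using a one-parameter family of ``dilated'' Campbell--Hausdorff products. For $t\in k$ (or, after fixing a $\QQ$-structure, for $t$ ranging over a suitable dense subfield, then by continuity over $\RRR$), define on $\mathfrak{u}$ the product $X\ast_t Y := \tfrac1t\,P(tX,tY)$, where $P$ is the Campbell--Hausdorff polynomial map; this is well defined since $P(X,Y)=X+Y+(\text{nilpotent bracket terms})$, so each coordinate of $P(tX,tY)$ is divisible by $t$ in the polynomial ring. One checks that $\ast_1$ is the group law making $\exp$ an isomorphism onto $U$, while $\ast_0$ (the limit $t\to 0$) is ordinary addition. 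Thus $\vfi\circ\exp$ being of positive type for $(\mathfrak{u},\ast_1)$ is the ``$t=1$'' instance and for $(\mathfrak{u},+)$ is the ``$t=0$'' instance of a single family of conditions. The bridge between them is that $X\mapsto tX$ is a Lie algebra automorphism of $\mathfrak{u}$ (grading-homogeneous dilation), hence intertwines $\ast_t$ with $\ast_1$; so positive-definiteness for $\ast_t$ is equivalent to positive-definiteness for $\ast_1$ for every $t\ne 0$. A cleaner route to the same end, which I would actually write up: the rescaled function $\vfi_t:=\vfi\circ\exp\circ(\text{mult. by }t)$ is of positive type on $U$ whenever $\vfi$ is (it is $\vfi$ composed with the automorphism $\exp(X)\mapsto\exp(tX)$ of $U$... — more precisely, precompose with the automorphism of $U$ induced by the dilation), and as $t\to 0$ these converge pointwise to the constant structure; passing to the limit along a sequence $t_n\to 0$ and using that pointwise limits of functions of positive type are of positive type gives one direction. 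For the converse direction one runs the argument with $t\to\infty$, or — more robustly — observes that the family is real-analytic in $t$ and positive-definiteness is a closed condition, so it either holds for all $t$ or its failure is detected on a dense set.

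I expect the main obstacle to be making the ``limit in $t$'' rigorous in a way that covers both implications simultaneously, since $k$ is only assumed to be a field of characteristic zero (not $\RRR$), so ``$t\to 0$'' has no intrinsic meaning in $k$. The remedy is to first reduce to a finitely generated situation: any finite collection $\{X_1,\dots,X_n\}\subset\mathfrak{u}$ and scalars $\{\lambda_i\}\subset\CC$ lives over a finitely generated subfield of $k$, which embeds in $\CC$, and then embeds (via that embedding) so that one may take $t$ real and let it tend to $0$ or $\infty$ along $\RRR$, invoking continuity of $P$ and of $\vfi\circ\exp$ in the real topology. Alternatively, and this is probably the slickest and is the route I would present: observe that $P(tX,tY)\equiv t(X+Y)\pmod{t^2}$, so for the positive-semidefiniteness of the matrix $\big(\vfi(\exp(s\,(X_j^{-1}\ast X_i)))\big)$ — wait, rather, use that for fixed data the relevant $n\times n$ Hermitian matrix $M(t)$ with entries $(\vfi\circ\exp)\big(\tfrac1t P(-tX_j,tX_i)\big)$ has real-analytic entries in $t$, equals the ``$U$-matrix'' at $t=1$ (positive semidefinite by hypothesis, after absorbing $t$ into the $X_i$) and the ``additive matrix'' at $t=0$; since $M(t)\succeq 0$ for all $t\ne 0$ by the dilation-automorphism argument and $t\mapsto M(t)$ is continuous, $M(0)\succeq 0$ as well, which is exactly positive type for $(\mathfrak{u},+)$. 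The reverse implication is symmetric. I would double-check one subtle point in the writeup: that the dilation $X\mapsto tX$ really is an automorphism of the \emph{Lie algebra} $\mathfrak{u}$ (equivalently, that $\mathfrak{u}$ is stable under it) — this is true because $\mathfrak{u}$ is a $k$-subspace and, as $t[X,Y]=[tX,Y]$, a linear rescaling automatically respects the bracket; the only real input is that $\exp$ conjugates the dilation on $\mathfrak{u}$ to a group automorphism of $U$, which follows from the polynomial identity $\exp(X)\mapsto\exp(tX)$ being multiplicative, i.e.\ from $P(tX,tY)=tP_1(X,Y)+t^2P_2(X,Y)+\cdots$ together with the defining relation $\exp(X)\exp(Y)=\exp(P(X,Y))$ — but this multiplicativity of $X\mapsto tX$ on $U$ is genuinely a consequence of the grading of the free nilpotent Lie algebra, so I would either cite it or spend a line deriving it from homogeneity of the bracket terms in $P$.
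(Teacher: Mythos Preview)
Your argument rests on a false claim: the uniform dilation $X\mapsto tX$ is \emph{not} a Lie algebra automorphism of $\mathfrak{u}$ unless $\mathfrak{u}$ is abelian. An automorphism must satisfy $[tX,tY]=t[X,Y]$, whereas bilinearity gives $[tX,tY]=t^{2}[X,Y]$; the identity ``$t[X,Y]=[tX,Y]$'' you invoke is linearity in one slot, not compatibility with the bracket. Correspondingly $\exp(X)\mapsto\exp(tX)$ is not a group homomorphism of $U$ --- already on the Heisenberg group the central coordinate of $\exp(tX)\exp(tY)$ scales like $t^{2}$, not $t$. The deformed groups $(\mathfrak{u},\ast_t)$ for $t\ne 0$ are indeed all isomorphic to $U$ via $X\mapsto tX$, but this isomorphism transports $\vfi'$ to $\vfi'(\,\cdot\,/t)$, not back to $\vfi'$; so knowing that $\vfi'$ is positive definite for $\ast_1$ tells you nothing about $\vfi'$ for $\ast_t$, and your key assertion ``$M(t)\succeq 0$ for all $t\ne 0$'' is unsupported. (A \emph{graded} dilation would be an automorphism, but a general nilpotent $\mathfrak{u}$ admits no positive grading, and even when it does the $t\to 0$ limit is the associated graded group, not $(\mathfrak{u},+)$.) The reverse implication is in worse shape still: analyticity of $t\mapsto M(t)$ together with positivity at the single point $t=0$ cannot force positivity at $t=1$.

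A structural red flag is that your argument never uses the centrality of $\vfi$ (equivalently, the $\Ad(U)$-invariance of $\vfi'$), yet the equivalence fails without it: for any $\lambda\in\widehat{\mathfrak{u}}$ nontrivial on $[\mathfrak{u},\mathfrak{u}]$, the function $\lambda$ is positive definite on $(\mathfrak{u},+)$ but $\lambda\circ\log$ is not of positive type on $U$ (a $3\times 3$ determinant on the Heisenberg group is already strictly negative). The paper's proof uses centrality in an essential way. It first reduces, via the Choquet decomposition of $\Tr(U)$ and of $\Tr(\mathfrak{u},\Ad(U))$, to the extremal case, where Corollary~\ref{Cor-CharCenter} forces $\vfi$ (resp.\ $\vfi'$) to be multiplicative along the center with central character $\chi$ (resp.\ $\chi'$). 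If $\ker\chi'$ contains no nonzero line, an induction along the ascending central series --- using the Campbell--Hausdorff formula and the orthonormal-sequence argument of Lemma~\ref{Lem-Hilbert} --- shows that both $\vfi$ and $\vfi'$ vanish off the center, so each is the trivial extension of a unitary character and hence of positive type. The general case follows by induction on $\dim_k\mathfrak{u}$ after passing to the quotient by a central line contained in $\ker\chi'$.
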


\begin{proof}

Set $\vfi':=\vfi\circ \exp.$ Since $\vfi$ and $\vfi'$ are invariant,
we  have to show that $\vfi$ is of positive type on $U$ if and only 
$\vfi'$ is of positive type on $\mathfrak{u}.$

Let $Z(U)$ be the center of $U$ and $\mathfrak{z}$ the center of
$\mathfrak{u}.$ Set $\chi:= \vfi|_{Z(U)}$ and 
$\chi':=\vfi'|_{\mathfrak{z}}.$

\vskip.2cm
$\bullet$ {\it First step.} Assume  that $\vfi$ is of positive type on $U$  \emph{or}
that $\vfi'$ is of positive type on $\mathfrak{u}.$
Then $\chi$ is of positive type on $Z(U)$ \emph{and}
 $\chi'$ is of positive type on $\mathfrak{z}.$

Indeed, this follows from the fact that  $\exp: \mathfrak{z} \to Z$ is a group isomorphism.

\vskip.2cm
We will  reduce the proof of Proposition~\ref{Pro-TracesNilpotent}
to the case where $\vfi$  has the following multiplicativity property
$$
\leqno{(*)}\quad \vfi(g z)= \vfi(g)\chi(z) \tout g\in U, z\in Z(U).
$$
Observe that property $(*)$ is equivalent to 
$$\leqno{(*)'}\quad \vfi'(X+Z)= \vfi'(X)\chi'(Z) \tout X\in \mathfrak{u}, Z\in \mathfrak{z},$$
since $\exp(X+Z)=\exp(X)\exp(Z)$ for  $X\in \mathfrak{u}$ and $Z\in \mathfrak{z}.$

\vskip.2cm
$\bullet$ {\it  Second  step.} To prove  Proposition~\ref{Pro-TracesNilpotent},
we may assume that $\vfi$ has property $(*)$.

Indeed, since $\Tr(U)$ is the closed convex hull of $\Char(U)$ and 
$\Tr(\mathfrak{u}, U)$ is the closed convex hull of $\Char(\mathfrak{u}, U),$
 it suffices to prove that if $\vfi\in \Char(U)$ then $\vfi'$ is of positive type  on $\mathfrak{u}$ and that if $\vfi'\in \Char(\mathfrak{u}, U)$ then $\vfi$ is of positive type on $U.$
 Moreover, by Corollary~\ref{Cor-CharCenter}
 and Proposition~\ref{Proposition-KernelProjectiveKernel},
$\vfi$ has property $(*)$ if $\vfi\in \Char(U)$ and if 
$\vfi'$ has property $(*)'$ if $\vfi'\in \Char(\mathfrak{u}, U)$. This proves the claim.

 \vskip.2cm
 In view of  the second step, we may and will assume
 in the sequel that  $\vfi:U\to \CC$ is a central function, normalized by $\vfi(e)=1,$ 
 with property $(*)$.
 If, moreover,  either $\vfi$ is of positive type or $\vfi'$ is of positive type, then 
 $\chi\in \widehat{Z(U)}$ and $\chi'\in\widehat{\mathfrak{u}}$, by the first step.
 
 \vskip.2cm
$\bullet$ {\it  Third step.} 
Assume that either $\vfi$ is  of positive type  or that $\vfi'$ is of positive type.
If $\ker \chi'$ contains no non-zero linear subspace, then both
$\vfi$ and  $\vfi'$  are of positive type.

We claim  that $\vfi' =\widetilde{\chi'}$ (that is, $\vfi'=0$ on $\mathfrak{u}\setminus \mathfrak{z}$). Since this is equivalent to $\vfi=\widetilde{\chi}$ 
and since $\chi\in \widehat{Z(U)}$ and  $\chi'\in \widehat{\mathfrak{z}}$,
once proved, this claim combined with Proposition~\ref{Pro-InducedTrace}
will imply that $\vfi$ and $\vfi'$ are of positive type.

Let $(\mathfrak{z}^i)_{1\leq i\leq r}$ be the ascending central series of 
$\mathfrak{u}$; so, $\mathfrak{z}^1= \mathfrak{z}$, 
$\mathfrak{z}^{i+1}$ is the inverse image in $\mathfrak{u}$ of the
center of $\mathfrak{u}/\mathfrak{z}^i$ under the canonical
map $\mathfrak{u}\to \mathfrak{u}/\mathfrak{z}^i$ for every $i$, and
$\mathfrak{z}^r= \mathfrak{u}.$

Let $(Z^i(U))_{1\leq i\leq r}$ be the corresponding  ascending central series of 
$U$ given by $Z^i(U)= \exp \mathfrak{z}^{i}$.

We show by induction on $i$ that 
 $\vfi' =0$ on $\mathfrak{z}^i\setminus \mathfrak{z}$
 for every $i\in \{2, \dots, r\}.$

Indeed, let $X\in \mathfrak{z}^2\setminus \mathfrak{z}.$
There exists $Y\in \mathfrak{u}$ with $[Y, X]\neq 0$. Since 
$[Y, X]\in \mathfrak{z}$ and since $\ker \chi'$ contains no non-zero linear subspace, 
there exists $t\in k$ such that   $\chi'(t[Y,X])=\chi'([tY,X])\neq 1$.
Upon replacing $Y$ by $tY,$ we can assume that 
 $\chi'([Y,X])\neq 1$. Since $\vfi'$ is $\Ad(U)$-invariant, it follows from property (*)
 that 
 $$
 \begin{aligned}
 \vfi'(X) &= \vfi'(\Ad(\exp Y)(X))= \vfi'(X+[Y,X])=\vfi'(X) \chi'([Y,X]).
 \end{aligned}
 $$
 As $\chi'([Y,X)] \neq 1,$ we have $\vfi'(X)=0$;  so,  the case $i=2$ is  settled.
 
 Assume now
 $\vfi'=0$ on $\mathfrak{z}^i\setminus \mathfrak{z}$
 for  some  $i\in \{2, \dots, r\}.$ 
 Let  $X\in \mathfrak{z}^{i+1}\setminus \mathfrak{z}^i.$
 Then there exists $Y\in \mathfrak{u}$ such that 
 $[Y,X]\notin\mathfrak{z}^{i-1}$.
  Let $(t_n)_{n\geq 1}$ be a sequence of pairwise distinct 
 elements in $k$. Set $y_n= \exp(t_n Y)\in U.$
 Denoting by $p_{i-1}: \mathfrak{u}\to \mathfrak{u}/ \mathfrak{z}^{i-1}$
 the canonical projection, we have
 $$
  \begin{aligned}
p_{i-1}(\Ad(y_n)X-X)&=p_{i-1}(-[t_n Y, X]),
  \end{aligned}
  $$
  since $[\mathfrak{u}, [\mathfrak{u},X]]\subset \mathfrak{z}^{i-1}.$ 
  In particular, we have
 $$(\Ad(y_n)X-X)- (\Ad(y_m)X-X)\notin \mathfrak{z} \tout n\neq m.$$
 Since $\Ad(y_n)X-X \in \mathfrak{z}^{i}$ and $\vfi'=0$ on $\mathfrak{z}^{i}\setminus \mathfrak{z}$ by the induction hypothesis, we have therefore
   $$
  \leqno{(**)'} \quad\vfi'(\Ad(y_n)X- \Ad(y_m)X)=0 \tout n\neq m.
  $$
 We also have,  by the Campbell-Hausdorff formula,
  $$
  \begin{aligned}
p_{i-1}(\log([y_n, \exp(X)]))&=p_{i-1}\left (\log(\exp(-t_n Y) \exp (X)  \exp(t_n Y) \exp(-X))\right)\\
&=p_{i-1}(-[t_n Y, X]),
  \end{aligned}
  $$
  where $[u,v]= uvu^{-1} v^{-1}$ is the commutator of $u,v\in U.$
   As $[t_n Y, X]$ commutes with $[t_m Y, X],$ it follows that 
 $$ [y_n, \exp(X)][y_m, \exp(X)]^{-1} \notin Z(U) \tout n\neq m.$$
 Since $[y_n, \exp(X)]\in Z^i(U)$ and $\vfi=0$ on $Z^{i}(U)\setminus Z(U)$ by the induction hypothesis, we have therefore
$$
  \leqno{(**)} \quad \vfi ([y_n, \exp(X)][y_m, \exp(X)]^{-1})=0 \tout n\neq m.
  $$
   If $\vfi'$ is of positive type,  it follows from Lemma~\ref{Lem-Hilbert} and from 
  $(**)'$  that $\vfi'(X)=0.$
  If $\vfi$ is of positive type, then Lemma~\ref{Lem-Hilbert} and 
  $(**)$ imply  that $\vfi(\exp X)=0,$ that  is, $\vfi'(X)=0.$
  
 As a result,  $\vfi' =0$ on $\mathfrak{z}^i\setminus \mathfrak{z}$
 for every $i\in \{2, \dots, r\}.$ Since $\mathfrak{z}^r= \mathfrak{u},$
 the claim is proved.

 \vskip.2cm
$\bullet$ {\it Fourth step.}  
Assume that either $\vfi$ is  of positive type  or that $\vfi'$ is of positive type.
Then both $\vfi$ and  $\vfi'$  are of positive type.

We proceed by induction on $\dim_k\mathfrak{u}$.
The case $\dim_k\mathfrak{u}=0$ being obvious, assume that the claim
is true for every unipotent algebraic group with a Lie algebra
of dimension strictly smaller than $\dim_k\mathfrak{u}$.

 In view of the third step, we may assume that there exists a subspace $\mathfrak{k}$ of $\mathfrak{z}$ with $\dim_k \mathfrak{k}>0$ contained in  $\ker \chi'.$
Then $\vfi'$ can be viewed as a function on 
the nilpotent Lie algebra $\mathfrak{u}/\mathfrak{k}$  and 
$\vfi$ as a function of positive type on the corresponding unipotent  algebraic
group $U/\exp(\mathfrak{k})$.  Since $\dim_k \mathfrak{u}/\mathfrak{k}$ is strictly
smaller than $\dim_k\mathfrak{u},$ the claim follows from the induction hypothesis.

\end{proof}

\begin{remark}
\label{Rem-Pro-TracesNilpotent}
Using induction on $\dim_k \mathfrak{u}$ as well as 
 the fact established in the third step of the proof of 
Proposition~\ref{Pro-TracesNilpotent}, one can easily obtain the description
of  $\Char(U)$ given in Theorem~\ref{Theo-GenAlgGroup} for the special case
$G=U$; in fact,  $\Char(U)$ (respectively, the primitive ideal space 
of $U$)  is determined in \cite{CareyEtAl1} (respectively, in \cite{Pfeffer})
using such arguments.
\end{remark}

Let $G$ be a group acting by automorphisms on $U.$
Every $g\in G$  induces an automorphism $ X\mapsto g(X)$ of  $\mathfrak{u}$
 determined by  the property
$$
\exp(g(X))= g(\exp(X)) \tout X\in \mathfrak{u}.
$$

Let $\widehat{\mathfrak{u}}$ be the Pontrjagin dual of the additive group $ \widehat{\mathfrak{u}}.$ Then $G$ acts by automorphisms $\widehat{\mathfrak{u}},$
induced by the dual action. 

Since the map $\psi \mapsto\psi\circ \log$ from the space of 
functions on  $\mathfrak{u}$ to the space of functions on $U$
is tautologically $G$-equivariant, the
following result is an immediate consequence of Propositions~\ref{Pro-TracesNilpotent} and \ref{Prop-Abelian}.

\begin{corollary}
\label{Cor-Pro-TracesNilpotent}
Let $U$ be as in Proposition~\ref{Pro-TracesNilpotent} and 
let $G$ be a group acting as automorphisms of  $U.$ Assume that 
the image of $G$ in $\Aut(\mathfrak{u})$ contains $\Ad(U)$.
\begin{itemize}
\item [(i)] The map 
$$ \Char(\mathfrak{u}, G)  \to \Char(U, G) \to\qquad  \psi \mapsto\psi\circ \log$$ 
is a bijection.
\item[(ii)] The map 
$$\Prob(\widehat{\mathfrak{u}})_{\rm erg}^G\to \Char(U,G), \qquad \mu\mapsto \mathcal{F}(\mu)\circ\log$$
is a bijection.
\end{itemize}
\end{corollary}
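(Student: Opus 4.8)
The plan is to assemble the statement from the two results the excerpt has already established: Proposition~\ref{Pro-TracesNilpotent}, which identifies traces on $U$ with $\Ad(U)$-invariant traces on $\mathfrak{u}$, and Proposition~\ref{Prop-Abelian}, which identifies $G$-invariant (resp.\ $G$-invariant ergodic) probability measures on $\widehat{\mathfrak u}$ with $G$-invariant traces (resp.\ characters) on the abelian group $\mathfrak u$. The glue between them is the map $\psi\mapsto \psi\circ\log$ from functions on $\mathfrak u$ to functions on $U$, which is a bijection (because $\exp\colon\mathfrak u\to U$ is a bijection) and is tautologically $G$-equivariant, as noted just before the statement: indeed $g(X)$ is \emph{defined} by $\exp(g(X))=g(\exp X)$, so $(\psi\circ\log)(g(u))=\psi(g(\log u))=(\psi^{g^{-1}}\circ\log)(u)$ in the appropriate sense of the dual action.

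For part (i): first I would observe that under the hypothesis that the image of $G$ in $\Aut(\mathfrak u)$ contains $\Ad(U)$, a function on $\mathfrak u$ is $G$-invariant if and only if it is both $\Ad(U)$-invariant and invariant under (the image of) $G$; consequently $\Tr(\mathfrak u, G)\subseteq \Tr(\mathfrak u,\Ad(U))$, and the bijection of Proposition~\ref{Pro-TracesNilpotent}, $\psi\mapsto\psi\circ\log$ from $\Tr(\mathfrak u,\Ad(U))$ onto $\Tr(U)$, carries the subset $\Tr(\mathfrak u,G)$ onto $\Tr(U,G)$. Here I should check the two sides match: $\psi\in\Tr(\mathfrak u,G)$ iff $\psi\circ\log$ is $G$-invariant and lies in $\Tr(U)$, which by the $G$-equivariance of $\psi\mapsto\psi\circ\log$ and Proposition~\ref{Pro-TracesNilpotent} is exactly $\psi\circ\log\in\Tr(U,G)$. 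Since this bijection of convex compact sets is affine and a homeomorphism (it is continuous for pointwise convergence, and so is its inverse $\vfi\mapsto\vfi\circ\exp$), it restricts to a bijection of extreme points, i.e.\ $\Char(\mathfrak u,G)\to\Char(U,G)$, $\psi\mapsto\psi\circ\log$. That is claim (i).

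For part (ii): apply Proposition~\ref{Prop-Abelian} with the abelian group $A=\mathfrak u$, which gives a bijection $\mathcal F\colon \Prob(\widehat{\mathfrak u})^G_{\mathrm{erg}}\to\Char(\mathfrak u,G)$. Compose with the bijection from part (i) to get $\mu\mapsto \mathcal F(\mu)\circ\log$ from $\Prob(\widehat{\mathfrak u})^G_{\mathrm{erg}}$ onto $\Char(U,G)$, which is claim (ii). The only point requiring a word of care — and the nearest thing to an obstacle, though it is minor — is the verification that $G$-invariance on $\mathfrak u$ decomposes as $\Ad(U)$-invariance plus $G$-invariance in the way claimed, so that the domains of the two cited propositions fit together; this is immediate from the hypothesis $\Ad(U)\subseteq \mathrm{image}(G)$ in $\Aut(\mathfrak u)$, since then the $G$-orbit of a function already contains its $\Ad(U)$-orbit. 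Everything else is formal composition of bijections, so no genuine difficulty remains.
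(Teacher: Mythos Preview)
Your proposal is correct and follows essentially the same approach as the paper, which simply states that the corollary is an immediate consequence of Propositions~\ref{Pro-TracesNilpotent} and~\ref{Prop-Abelian} together with the tautological $G$-equivariance of $\psi\mapsto\psi\circ\log$. You have merely spelled out the details (the affine homeomorphism restricting to extreme points, and the role of the hypothesis $\Ad(U)\subset\mathrm{image}(G)$) that the paper leaves implicit.
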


Let $G$ be a group of  automorphisms of  $U$ containing $\Ad(U).$
Let $\lambda\in \widehat{\mathfrak{u}}$. Recall (see Section~\ref{S0}) that we associated 
to $\lambda$ the following  two $G$-invariant
ideals   of $\mathfrak{u}$ 
$$\mathfrak{k}_\la=\{X\in \mathfrak{u}\mid \lambda(\Ad(g)(tX))=1 \tout g\in G, t\in k\}$$
and 
$$\mathfrak{p}_\la=\{X\in \mathfrak{u}\mid \lambda(\Ad(g)(tX))=\Ad(g)(tX) \tout g\in G, t\in k\}.$$
\begin{proposition}
\label{Pro-KCenter}
Let $\lambda\in \widehat{\mathfrak{u}}$, 
$p:\mathfrak{u}\to \mathfrak{u}/\mathfrak{k}_\lambda,$
and $P_\la= \exp \mathfrak{p}_\lambda.$
\begin{itemize}
\item[(i)] We have 
$$\mathfrak{p}_\lambda= p^{-1}\left(Z( \mathfrak{u}/\mathfrak{k}_\lambda)^G\right),$$ where $Z( \mathfrak{u}/\mathfrak{k}_\lambda)^G$ is the  central ideal  of $G$-fixed elements in $ \mathfrak{u}/\mathfrak{k}_\lambda.$
\item[(ii)]  The map 
$$\chi_\la: P_\la\to \SS^1, \qquad \exp(X)\to \lambda(X)$$
 is a $G$-invariant unitary character of $P_\la$.
 \end{itemize}
 \end{proposition}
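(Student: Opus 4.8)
The plan is to establish (i) first, since (ii) will follow readily once we understand $\mathfrak{p}_\lambda$ correctly. For (i), note that $\mathfrak{k}_\lambda$ is a $G$-invariant ideal of $\mathfrak{u}$ (as already recorded), so $G$ acts by automorphisms on the quotient Lie algebra $\mathfrak{q} := \mathfrak{u}/\mathfrak{k}_\lambda$, and $\lambda$ descends to a well-defined character $\bar\lambda \in \widehat{\mathfrak{q}}$ (because $\lambda$ is trivial on $\mathfrak{k}_\lambda$ — indeed $\mathfrak{k}_\lambda \subset \mathfrak{p}_\lambda$ is contained in $\ker\lambda$ up to the action, but in fact taking $g = e$, $t = 1$ in the defining condition shows $\lambda|_{\mathfrak{k}_\lambda} = 1$). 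First I would unwind the definition: $X + \mathfrak{k}_\lambda \in \mathfrak{p}_\lambda/\mathfrak{k}_\lambda$ iff $\bar\lambda(\mathrm{Ad}(g)(t\bar X)) = \bar\lambda(t\bar X)$ for all $g \in G$, $t\in k$. The key observation is that $\mathfrak{k}_{\bar\lambda}$ (the analogue of $\mathfrak{k}_\lambda$ formed inside $\mathfrak{q}$ with respect to $\bar\lambda$) is trivial: if $\bar X \in \mathfrak{k}_{\bar\lambda}$ then lifting to any $X \in \mathfrak{u}$ gives $\lambda(\mathrm{Ad}(g)(tX)) = 1$ for all $g,t$, so $X \in \mathfrak{k}_\lambda$, i.e.\ $\bar X = 0$.

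The heart of (i) is then to show: for $\bar X \in \mathfrak{q}$, one has $\bar\lambda(\mathrm{Ad}(g)(t\bar X)) = \bar\lambda(t\bar X)$ for all $g \in G$, $t \in k$, if and only if $\bar X$ lies in the center of $\mathfrak{q}$ and is $G$-fixed. The ``if'' direction is immediate. For ``only if'', suppose $\bar X \in \mathfrak{p}_\lambda/\mathfrak{k}_\lambda$. Since $\mathrm{Ad}(U)$ is contained in the image of $G$ in $\mathrm{Aut}(\mathfrak{q})$, we may in particular apply the condition with $g = \exp(Y)$ for $Y \in \mathfrak{u}$ (acting through $\bar Y \in \mathfrak{q}$): writing $\mathrm{Ad}(\exp Y)(\bar X) = \bar X + [\bar Y, \bar X] + \tfrac12[\bar Y,[\bar Y,\bar X]] + \cdots$, the condition $\bar\lambda(\mathrm{Ad}(\exp(tY))(s\bar X)) = \bar\lambda(s\bar X)$ for all $s,t$ forces $\bar\lambda$ to annihilate all iterated brackets $\mathrm{ad}(\bar Y)^m(\bar X)$ (expand in $t$ — over a characteristic-zero field one can isolate each homogeneous term, or argue by induction on the nilpotency degree starting from the deepest bracket which lies in the center). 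In particular $\bar\lambda$ annihilates $[\bar Y, \bar X]$ and all its brackets with further elements, so that $k$-span of the orbit $\{[\bar Y,\bar X] : Y\}$ together with its brackets is a linear subspace on which $\bar\lambda$ is trivial; by the triviality of $\mathfrak{k}_{\bar\lambda}$ this forces $[\bar Y,\bar X] = 0$ for all $Y$, i.e.\ $\bar X \in Z(\mathfrak{q})$. Once $\bar X$ is central, $\mathrm{Ad}(g)(\bar X) - \bar X$ is again central for every $g \in G$, and the condition $\bar\lambda(t(\mathrm{Ad}(g)\bar X - \bar X)) = 1$ for all $t$ together with triviality of $\mathfrak{k}_{\bar\lambda}$ gives $\mathrm{Ad}(g)(\bar X) = \bar X$. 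Hence $\bar X \in Z(\mathfrak{q})^G$, proving $\mathfrak{p}_\lambda = p^{-1}(Z(\mathfrak{q})^G)$.

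For (ii): on $P_\lambda = \exp\mathfrak{p}_\lambda$, we must check $\chi_\lambda(\exp X) = \lambda(X)$ is a well-defined homomorphism, $G$-invariant, and unitary-valued. Unitarity is automatic since $\lambda \in \widehat{\mathfrak{u}}$ takes values in $\mathbb{S}^1$. For the homomorphism property, by the Campbell--Hausdorff formula $\exp(X)\exp(Y) = \exp(X + Y + \tfrac12[X,Y] + \cdots)$; since $X, Y \in \mathfrak{p}_\lambda$ and, by part (i), $\bar X, \bar Y$ are central in $\mathfrak{q}$, all the bracket terms lie in $\mathfrak{k}_\lambda \subset \ker\lambda$, so $\lambda$ of the CH-product equals $\lambda(X)\lambda(Y)$ — note $\mathfrak{p}_\lambda$ is an ideal so the product stays in $P_\lambda$. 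For $G$-invariance, $\chi_\lambda(g(\exp X)) = \chi_\lambda(\exp g(X)) = \lambda(g(X))$, and since $X \in \mathfrak{p}_\lambda$ the defining property of $\mathfrak{p}_\lambda$ (with $t = 1$) gives $\lambda(\mathrm{Ad}(g')(X)) = \lambda(X)$ — here using that the image of $G$ in $\mathrm{Aut}(\mathfrak{u})$ together with the $\mathrm{Ad}$-part covers the relevant automorphisms, so $\lambda(g(X)) = \lambda(X)$ as required. I expect the main obstacle to be the expansion-in-$t$ argument showing $\bar\lambda$ kills every iterated bracket $\mathrm{ad}(\bar Y)^m(\bar X)$: one wants to argue cleanly that $\bar X$ being in $\mathfrak{p}_\lambda$ already forces centrality modulo $\mathfrak{k}_\lambda$, and this requires carefully using the characteristic-zero hypothesis together with the $k$-linearity built into the definition of $\mathfrak{k}_\lambda$ (the ``$t X$ for all $t \in k$'' clause) to conclude that spans of bracket-orbits annihilated by $\bar\lambda$ must vanish.
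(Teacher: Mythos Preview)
Your argument for (i) has a genuine gap. You set up the quotient $\mathfrak{q}=\mathfrak{u}/\mathfrak{k}_\lambda$ and correctly observe that $\mathfrak{k}_{\bar\lambda}=0$. But when you then try to conclude that $[\bar Y,\bar X]=0$ (and later that $\Ad(g)\bar X-\bar X=0$) ``by the triviality of $\mathfrak{k}_{\bar\lambda}$'', you have only shown that the element in question lies in
\[
N:=\{Z\in\mathfrak{q}\mid \bar\lambda(tZ)=1\text{ for all }t\in k\},
\]
not in $\mathfrak{k}_{\bar\lambda}$. The definition of $\mathfrak{k}_{\bar\lambda}$ requires $\bar\lambda(\Ad(h)(tZ))=1$ for \emph{all} $h\in G$ and $t\in k$, and nothing in your Campbell--Hausdorff expansion controls the full $G$-orbit of $[\bar Y,\bar X]$ or of $\Ad(g)\bar X-\bar X$. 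Since $N$ can be strictly larger than $\mathfrak{k}_{\bar\lambda}=0$, the conclusion does not follow.

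The paper closes this gap with a one-line computation that avoids Campbell--Hausdorff entirely and uses only that $G$ is a group. Given the defining condition $\lambda(\Ad(g)(tX))=\lambda(tX)$ for all $g\in G$, $t\in k$, one checks directly that $\Ad(g)X-X\in\mathfrak{k}_\lambda$: for any $h\in G$ and $s\in k$,
\[
\lambda\bigl(\Ad(h)(s(\Ad(g)X-X))\bigr)
=\lambda(\Ad(hg)(sX))\,\lambda(\Ad(h)(sX))^{-1}
=\lambda(sX)\,\lambda(sX)^{-1}=1,
\]
applying the hypothesis once with $hg$ and once with $h$. Thus $p(X)$ is $G$-fixed in $\mathfrak{q}$; since $\Ad(U)\subset G$ and $\mathrm{ad}$ is nilpotent, $G$-fixed already implies central, and (i) follows. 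Your framework (pass to $\mathfrak{q}$, use $\mathfrak{k}_{\bar\lambda}=0$) is compatible with this: the same computation in $\mathfrak{q}$ gives $\Ad(g)\bar X-\bar X\in\mathfrak{k}_{\bar\lambda}=0$ immediately, making the bracket analysis unnecessary.

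Your treatment of (ii) via Campbell--Hausdorff and the definition of $\mathfrak{p}_\lambda$ is correct; the paper instead appeals to Proposition~\ref{Pro-TracesNilpotent} (applied to the unipotent group $P_\lambda$), which gives the homomorphism property more indirectly but with no extra work.
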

 \begin{proof}
 (i) Let $X\in \mathfrak{u}.$ We have
 $$
\begin{aligned}
p(X) \in Z(\mathfrak{u}/\mathfrak{k}_\lambda)^G &
\Longleftrightarrow  \Ad(g)X-X\in \mathfrak{k}_\lambda \tout  g\in G\\
&\Longleftrightarrow  \Ad(g)(tX)-tX\in \mathfrak{k}_\lambda \tout  g\in G, t\in k \\
&\Longleftrightarrow  \lambda(\Ad(g) (tX))= \lambda(tX) \quad  \tout  g\in G, t\in k\\
&\Longleftrightarrow X\in \mathfrak{p}_\lambda. 
\end{aligned}
$$

(ii) is a  special case of Proposition~\ref{Pro-TracesNilpotent}.
\end{proof}
We will later need the following elementary lemma.
\begin{lemma}
\label{Lem-CommutarorAndSubgroup}
Let $U$ be as in Proposition~\ref{Pro-TracesNilpotent} and 
$g\in \Aut(U).$ Let $N$ be a normal subgroup of $U.$
For $X\in \mathfrak{u},$ the set 
$$
A:=\{t\in k\mid \exp(-tX)\exp(g(tX))\in N\}
 $$
 is a  subgroup of the additive group of the field $k.$ 
  \end{lemma}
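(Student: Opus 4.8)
The plan is to strip the statement down to elementary group theory by exploiting that the two curves entering the definition of $A$ are one-parameter subgroups of $U$. Concretely, I would set $a_t:=\exp(-tX)$ and $b_t:=\exp(g(tX))=g(\exp(tX))$. Since $t\mapsto\exp(tX)$ is a homomorphism from the additive group of $k$ into $U$ (the elements $sX$ and $tX$ commute in $\mathfrak{u}$, so $\exp((s+t)X)=\exp(sX)\exp(tX)$) and $g$ is an automorphism of $U$, both $t\mapsto a_t$ and $t\mapsto b_t$ are homomorphisms of $(k,+)$ into $U$. In particular $a_{s+t}=a_sa_t$, $b_{s+t}=b_sb_t$, $a_{-t}=a_t^{-1}$, $b_{-t}=b_t^{-1}$, and any two of the $a_t$ commute (they lie in the image of an abelian group). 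With this notation $A=\{t\in k\mid a_tb_t\in N\}$, and it suffices to check that $A$ contains $0$ and is stable under negation and under addition.

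The first two are immediate: $a_0b_0=e\in N$; and if $t\in A$, then $a_{-t}b_{-t}=a_t^{-1}b_t^{-1}=(b_ta_t)^{-1}$, while $b_ta_t=a_t^{-1}(a_tb_t)a_t$ lies in $N$ because $a_tb_t\in N$ and $N$ is normal in $U$, so $(b_ta_t)^{-1}\in N$, i.e.\ $-t\in A$.

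The substance is closure under addition. Given $s,t\in A$, set $m:=a_sb_s\in N$ and $n:=a_tb_t\in N$, so $b_s=a_s^{-1}m$ and $b_t=a_t^{-1}n$; then
\[
a_{s+t}b_{s+t}=a_sa_tb_sb_t=a_sa_ta_s^{-1}\,m\,a_t^{-1}\,n=a_t\,m\,a_t^{-1}\,n,
\]
the last step using that $a_s$ and $a_t$ commute. Since $N$ is normal, $a_tma_t^{-1}\in N$, whence $a_{s+t}b_{s+t}\in N$ and $s+t\in A$. Together with the previous paragraph this shows $A$ is a subgroup of $(k,+)$.

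I do not anticipate a real obstacle: the only point requiring care is the bookkeeping in the displayed identity — peeling the ``$N$-part'' $a_sb_s$, resp.\ $a_tb_t$, off of $b_s$, resp.\ $b_t$, and then using the commutativity of the one-parameter subgroup $t\mapsto a_t$ to collapse $a_sa_ta_s^{-1}$ to $a_t$. It is perhaps worth noting that this argument uses neither the characteristic-zero hypothesis nor any serious Lie-algebra input beyond the fact that $t\mapsto\exp(tX)$ is a homomorphism of $(k,+)$ into $U$; only the normality of $N$ is needed, not its $g$-invariance.
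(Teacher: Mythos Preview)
Your proof is correct and follows essentially the same approach as the paper: both exploit that $t\mapsto\exp(tX)$ and $t\mapsto g(\exp(tX))$ are one-parameter subgroups of $U$ and then perform an elementary group-theoretic manipulation using only the normality of $N$. The only cosmetic difference is that the paper checks closure under $t-s$ in a single identity (conjugating a product of elements of $N$ by $\exp(sX)$), whereas you treat negation and addition separately; the algebra is the same in spirit, and your $a_t,b_t$ notation arguably makes it cleaner.
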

 \begin{proof}
 Observe first that $0\in A$. Let 
 $t,s\in A.$ Then 
 $$
 \begin{aligned}
 &\exp(-(t-s)X)\exp(g((t-s)X))\\
 &=\exp(sX) \exp(-tX)\exp(g(tX)) \exp(g(-sX)\\
&=\exp(sX)( \exp(-tX)\exp(g(tX)) \exp(g(-sX)\exp(sX)) \exp(-sX)\\
&=\exp(sX)\left(\exp(-tX)\exp(g(tX))(\exp(-sX)\exp(g(sX)))^{-1} \right)\exp(-sX).
\end{aligned}
$$
Since $N$ is a normal subgroup of $U,$ it  follows that $t-s\in A.$
\end{proof}
\section{Characters and invariant probability measures}
\label{S:CharInvariant}
In this section, we show how a character on an algebraic group over $\QQ$ 
gives rise to an invariant ergodic probability measure on an appropriate adelic solenoid.
\subsection{Reduction to the case $k=\QQ$}
Let $k$ be a number field and $G=\GG(k)$ be the group of  $k$-rational points of a connected linear algebraic group $\GG$ over $k$.
By Weil's restriction of scalars (see \cite[Proposition 6.1.3]{Zimmer}, \cite[6.17-6.21]{Borel-Tits1}),  there is  an algebraic group $\GG'$ over $\QQ$ such that 
$G$ is naturally isomorphic  to the group $G'=\GG'(\QQ)$ of
$\QQ$-points of $\GG'$. If $G=LU$ is a Levi decomposition
of $G$ over $k$, then $G'=L'U'$ is a Levi decomposition of $G'$ over $\QQ,$
where $L'$ and $U'$ are the images of $L$ and $U$ under the isomorphism $G\to G'.$
Moreover,  $G'$ is generated by its unipotent one-parameter subgroups, if  $G$ is generated by unipotent one-parameter subgroups.
These remarks show that it suffices to prove Theorem~\ref{Theo-GenAlgGroup}
in the case $k=\QQ.$

\subsection{Restriction to the unipotent radical}
\label{SS:RestrictionRadical}
Let $G$ be the group of  $\QQ$-rational points of a connected linear algebraic group  over $\QQ$ and let $G=LU$ be a Levi decomposition of $G$.

Let $\psi\in \Char(G).$ Set $\vfi:=\psi|_U.$ By Theorem~\ref{Prop-GNS-InnerAut}, we have
$\vfi\in \Char(U, G)$. So, by Corollary~\ref{Cor-Pro-TracesNilpotent},
$\vfi=\mathcal{F}(\mu)\circ \log$ for a unique $\mu\in \Prob(\widehat{\mathfrak{u}})_{\rm erg}^G,$ where $\mathfrak{u}$ is the Lie algebra of $U.$

We want to determine the set $\Prob(\widehat{\mathfrak{u}})_{\rm erg}^G.$
In the following discussion,  the Lie algebra structure 
of  $\mathfrak{u}$ will play no role, only its linear structure
being relevant. So, we let $E$ be a finite dimensional vector space
over $\QQ$ and recall how the Pontrjagin dual $\widehat{E}$
can be described in terms of  ad\`eles.

Let $\P$ be the set of primes of $\NN.$ Recall that,
for every  $p\in \P ,$ the additive group of the field $\QQ_p$ of $p$-adic numbers is a locally compact  group
containing the subring $\ZZ_p$ of $p$-adic integers as compact open subgroup.
The ring $\AA$ of ad\`eles of $\QQ$  is the restricted product  $\AA= \RRR\times \prod_{p\in \P} (\QQ_p, \ZZ_p)$
relative to the subgroups $\ZZ_p$; thus, 
$$\AA= \left\{(a_\infty, a_2, a_3, \cdots) \in \RRR\times \prod_{p\in \P} \QQ_p\mid a_p \in \ZZ_p \text{ for almost every } p\in \P \right\}.$$
The field $\QQ$ can be viewed as discrete and cocompact subring of the locally compact 
ring $\AA$ via the diagonal embedding
$$
\QQ\to \AA, \qquad q\mapsto (q, q, \dots).
$$

Let $b_1, \dots b_d$ be a  basis of  $E$ over $\QQ.$
Fix a nontrivial unitary character $e$ of $\AA$ which is trivial on  $\QQ.$
For every $a=(a_1, \dots, a_d)\in \AA^d,$ let $ \lambda_a\in \widehat{E}$ be defined  by $$
 \lambda_a(x)= e(\sum_{i=1}^d a_iq_i) \quad \tout x=\sum_{i=1}^d q_i b_i\in E.
 $$
The map $a\mapsto \lambda_a$ factorizes to an  isomorphism of topological groups
 $$
 \AA^d/\QQ^d\to \widehat{E}, \quad a+ \QQ^d\mapsto \lambda_a
 $$
 (see Theorem 3 in Chap.IV, \S 3 of \cite{Weil}).
 So, $ \widehat{E}$ can be identified with the \textbf{adelic solenoid} $\AA^d/\QQ^d.$
 We examine now how this identification behaves under the action of
 $GL(E)$ on $ \widehat{E}.$
 
Set $\QQ_\infty =\RRR.$ Then $GL_d(\QQ)\subset GL_d(\QQ_p)$ 
acts on $\QQ_p^d$ for every $p\in \P\cup \{\infty\}$ in the usual way;
the induced diagonal action of $GL_d(\QQ)$ on $\AA^d$
preserves the lattice $\QQ^d$, giving rise to a (left) action of
$GL_d(\QQ)$ on $\AA^d/\QQ^d$.

 Let $\theta\in GL(E)$ and let $A\in GL_n(\QQ)$ its matrix with respect to the basis $b_1, \dots, b_d.$ One checks that 
 $$\lambda_a \circ \theta= \lambda_{A^t a} \tout a \in \AA^d.$$

 We summarize the previous discussion as follows.
 \begin{proposition}
 \label{Pro-AdelesInvMeasures}
  Let $E$ be a finite dimensional vector space over $\QQ$ of dimension $d.$
The choice  of a basis of $E$ defines an isomorphism of topological groups
$\AA^d/\QQ^d\to \widehat{E}$, which is equivariant for the
action of $GL_d(\QQ)$ given by inverse matrix transpose on $\AA^d/\QQ^d$
and  the dual action of $GL(E)$ on $\widehat{E}.$
This isomorphism  induces a bijection
 $$\Prob(\AA^d/\QQ^d)_{\rm erg}^{G}\to\Prob(\widehat{E})_{\rm erg}^G,$$
 for every subgroup $G$ of $GL(E)\cong GL_d(\QQ).$
  \end{proposition}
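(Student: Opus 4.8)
The plan is to show that Proposition~\ref{Pro-AdelesInvMeasures} is a formal consequence of the identifications and computations carried out in the paragraphs immediately preceding it, combined with two elementary transport principles. First I would recall that, once the basis $b_1,\dots,b_d$ of $E$ and the nontrivial character $e$ of $\AA$ trivial on $\QQ$ are fixed, the map $a\mapsto\lambda_a$ induces a topological group isomorphism $\iota\colon\AA^d/\QQ^d\to\widehat{E}$; this is exactly the content of Theorem 3 in Chap.~IV, \S 3 of \cite{Weil} invoked above, which I take as given. Since $\iota$ is a homeomorphism, it induces a bijection $\iota_*\colon\Prob(\AA^d/\QQ^d)\to\Prob(\widehat{E})$ on spaces of regular Borel probability measures, with inverse $(\iota^{-1})_*$, simply by pushing forward Borel sets.

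Next I would pin down the equivariance. For $\theta\in GL(E)$ with matrix $A\in GL_d(\QQ)$ relative to $b_1,\dots,b_d$, the dual (left) action on $\widehat{E}$ is $\lambda\mapsto\lambda\circ\theta^{-1}$, in the convention $\varphi^g=\varphi\circ g^{-1}$ fixed earlier in the paper. Applying the identity $\lambda_a\circ\theta=\lambda_{A^t a}$ established above to $\theta^{-1}$, whose matrix is $A^{-1}$, gives $\lambda_a\circ\theta^{-1}=\lambda_{(A^{-1})^t a}=\lambda_{(A^t)^{-1}a}$. Hence under $\iota$ the dual action of $\theta$ corresponds to the map $a+\QQ^d\mapsto (A^t)^{-1}a+\QQ^d$ on $\AA^d/\QQ^d$, which is well defined because $GL_d(\QQ)$ preserves the lattice $\QQ^d$; this is precisely the ``inverse matrix transpose'' action in the statement. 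Thus $\iota$ is equivariant for these two actions once $GL(E)$ is identified with $GL_d(\QQ)$ via the chosen basis.

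Finally, for a fixed subgroup $G$ of $GL(E)\cong GL_d(\QQ)$, equivariance of $\iota$ shows that $\iota_*$ maps $G$-invariant measures to $G$-invariant measures and restricts to a bijection $\Prob(\AA^d/\QQ^d)^G\to\Prob(\widehat{E})^G$, its inverse being the likewise equivariant $(\iota^{-1})_*$. To see that this bijection carries ergodic measures to ergodic measures I would argue in either of two ways: directly, $\iota$ induces an equivariant bijection between the $G$-invariant Borel subsets of the two spaces and $\iota_*\mu(\iota(B))=\mu(B)$, so a $G$-invariant Borel set is $\mu$-null/conull iff its image is $\iota_*\mu$-null/conull, whence $\mu$ is ergodic iff $\iota_*\mu$ is; or abstractly, $\iota_*$ is an affine homeomorphism of the compact convex sets $\Prob(\AA^d/\QQ^d)^G$ and $\Prob(\widehat{E})^G$ for the weak-$*$ topology, hence carries extreme points to extreme points, and the ergodic measures are exactly the extreme points of $\Prob(-)^G$. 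I expect no serious obstacle: the proposition is a bookkeeping summary, and the only point requiring care is the direction of the transpose and inverse in the equivariance computation, which is already done above and merely needs to be recorded correctly.
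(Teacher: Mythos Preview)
Your proposal is correct and matches the paper's approach exactly: the paper states this proposition as a summary of the preceding discussion with no separate proof, and you have simply written out the bookkeeping details (the equivariance computation with the inverse transpose, and the transport of invariance and ergodicity under the homeomorphism $\iota$) that the paper leaves implicit. There is nothing to add.
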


 \section{Invariant probability measures and orbit closures on solenoids}
\label{S:InvProba}
For an algebraic $\QQ$-subgroup of $GL_d$
which is generated by unipotent subgroups, we will determine in this section
the invariant probability measures as well as the orbits closures on the adelic solenoid $\AA^d/\QQ^d.$
We have first to treat the case of $S$-adic solenoids.
\subsection{Invariant probability measures and orbit closures on  $S$-adic  solenoids}
 \label{SS:InvariantProba-S-Adic}
 
 Let $\GG$ be an algebraic subgroup of $GL_d$  defined over $\QQ.$ 
For every subring $R$ of  an overfield of $\QQ,$ we denote by  
$\GG(R)$  the group of elements of $\GG$ with coefficients in $R$ and determinant invertible in $R.$ In particular, $\GG(\QQ)=\GG\cap GL_d(\QQ).$

 Fix an integer $d\geq 1$ and let $S$ be a finite subset of $\P \cup \{\infty\}$
with $\infty \in S.$ Set  
$$\QQ_S^d:= \prod_{p\in S} \QQ_p^d$$ and let 
 $\ZZ[1/S]$  denote the subring  of $\QQ$ generated by 
 $1$ and $(1/p)_{p\in S\cap \P}.$
Then $\ZZ[1/S]^d$ embeds diagonally as a cocompact discrete subring 
of $\QQ_S^d.$

The  product group 
$$
\GG(\QQ_S):= \prod_{p\in S}\GG(\QQ_p)
$$
is a locally compact group and acts on $\QQ_S^d$ in the obvious way. 
The group $\GG( \ZZ[1/S])$  embeds diagonally as discrete subgroup of $\GG(\QQ_S).$
As $\GG( \ZZ[1/S])$  preserves $\ZZ[1/S]^d,$  this gives rise to  an action 
of $\GG( \ZZ[1/S])$ on   the  \textbf{$S$-adic solenoid}
$$X_S:=\QQ_S^d/ \ZZ[1/S]^d,$$
which is a compact connected abelian group.

A \textbf{unipotent one-parameter subgroup} of $\GG(\QQ_S)$
is a subgroup of $\GG(\QQ_S)$ of the form 
$\{(u_p(t_p))_{p\in S} \mid t_p\in \QQ_p, p\in S\}$ for 
$\QQ$-rational homomorphisms $u_p: \GG_a\to \GG$  from the
additive group $\GG_a$ of dimension 1 to $\GG.$

We aim to describe the $\GG(\ZZ[1/S])$-invariant probability measures 
on $X_S$ as well as orbit closures of points in $X_S$.
  Our results will be deduced from Ratner's measure rigidity and topological rigidity theorems in the $S$-adic setting (see \cite{Ratner2} and  \cite{Margulis-Tomanov}); actually, we will need the more precise version of  Ratner's results 
 in the $S$-arithmetic case from \cite{Tomanov}.

\subsubsection{Invariant probability measures}
For a closed subgroup $Y$ of $X_S$ and for $x\in X,$
we denote by $\mu_{x+Y}\in \Prob(X_S)$ the image of 
the normalized Haar $\mu_{Y}$
under the map $X_S\to X_S$ given by translation by $x.$

Let $V$ be a linear subspace of $\QQ^d.$
Denote by  $V(\QQ_p)$ the linear span of $V$ in 
$\QQ_p^d$ for $p\in S$. Then $V(\QQ_S):=  \prod_{p\in S} V(\QQ_p)$ 
is a subring of $\QQ_S^d$ and $V(\ZZ[1/S]):= V\cap \ZZ[1/S]^d$
is a cocompact lattice in $V(\QQ_S)$. So, $V(\QQ_S)/V(\ZZ[1/S])$ is a 
\textbf{subsolenoid} of $X_S$, that is,  a closed and connected subgroup of $X_S.$

 \begin{proposition}
 \label{Prop-InvProba-S-adic}
 Assume that $\GG(\QQ_S)$ is generated by unipotent one-parameter unipotent 
 subgroups. Let $\mu$ be an ergodic $\GG(\ZZ[1/S])$-invariant probability measure
 on the Borel subsets of $X_S.$ There exists a pair $(a,V)$  consisting of a point $a\in \QQ_S^d$ and a  $\GG(\QQ)$-invariant linear subspace $V$ of $\QQ^d$ 
 with the following properties:
 \begin{itemize}
 \item[(i)] $g(a)\in a + V(\QQ_S)$ for every  $g\in \GG(\QQ_S);$
 \item[(ii)] $\mu= \mu_{x+Y}$, where $x$ and $Y$ are  the images of $a$ and $V(\QQ_S)$
in $X_S.$
 \end{itemize}
 \end{proposition}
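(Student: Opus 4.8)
The plan is to lift the problem from the compact solenoid $X_S = \QQ_S^d/\ZZ[1/S]^d$ to a homogeneous space of an $S$-adic Lie group, where Ratner's measure classification theorem (in the $S$-arithmetic form of \cite{Tomanov}, or \cite{Ratner2}, \cite{Margulis-Tomanov}) applies directly. First I would form the semidirect product $H := \GG \ltimes \GG_a^d$ (with $\GG$ acting on $\GG_a^d$ via its inclusion in $GL_d$), so that $H(\QQ_S) = \GG(\QQ_S)\ltimes \QQ_S^d$ and $\Lambda := H(\ZZ[1/S]) = \GG(\ZZ[1/S])\ltimes \ZZ[1/S]^d$ is a discrete subgroup; by the Borel--Harish-Chandra finiteness results in the $S$-arithmetic setting, $\Lambda$ is in fact a lattice in $H(\QQ_S)$. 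The $\GG(\ZZ[1/S])$-action on $X_S$ is then exactly the action of the subgroup $\GG(\QQ_S)$ on $\Lambda \backslash H(\QQ_S)$ by right translation, after identifying $X_S$ with the fiber $\{\Lambda(g,v) : v\in\QQ_S^d\}$ over the base point $\Lambda \GG(\QQ_S)$; concretely, an ergodic $\GG(\ZZ[1/S])$-invariant measure $\mu$ on $X_S$ corresponds to the ergodic $\GG(\QQ_S)$-invariant measure $\widehat\mu$ on $\Lambda\backslash H(\QQ_S)$ obtained by pushing $\mu$ forward under $v\mapsto \Lambda(e,v)$. The key hypothesis that $\GG(\QQ_S)$ is generated by unipotent one-parameter subgroups guarantees that $\widehat\mu$ is invariant under a subgroup generated by one-parameter $\Ad$-unipotent subgroups of $H(\QQ_S)$, so Ratner's theorem applies to $\widehat\mu$.

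Next I would invoke Ratner's classification: $\widehat\mu$ is the $\Lambda\cap M\backslash M$-invariant (algebraic) measure supported on a single closed orbit $\Lambda M g_0$ for some closed subgroup $M\le H(\QQ_S)$ of ``algebraic type'' containing $\GG(\QQ_S)$ up to conjugation, with $\Lambda\cap M$ a lattice in $M$. The task is then to translate this abstract $M$ back into the concrete pair $(a,V)$. Writing $g_0 = (g_0', a)$ with $a\in\QQ_S^d$, the fact that $\widehat\mu$ lives on a set of the form $\Lambda(e,v)g_0$ with $v$ ranging over a coset forces $M$ to be of the form $\GG'' \ltimes W$ where $W\le \QQ_S^d$ is a $\GG(\QQ_S)$-invariant $\QQ_S$-subspace; invariance of $\Lambda\cap M$ as a lattice in $M$ forces $W$ to be defined over $\QQ$ (indeed over $\ZZ[1/S]$), i.e. $W = V(\QQ_S)$ for a $\GG(\QQ)$-invariant linear subspace $V\le\QQ^d$, and the projection of $g_0$ to the base must be $\GG(\QQ_S)$, which after absorbing $g_0'$ into the labeling gives condition (i): $g(a)-a \in V(\QQ_S)$ for all $g\in\GG(\QQ_S)$ (this is precisely the statement that the orbit $\Lambda(e,a)\GG(\QQ_S)$ stays inside a single $W$-coset). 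Unwinding the identification $X_S \cong \{\Lambda(e,v)\}$ then yields that $\mu = \mu_{x+Y}$ with $x,Y$ the images of $a, V(\QQ_S)$, which is (ii).

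The main obstacle I expect is the descent of the subgroup $M$ (equivalently the subspace $W$) from the $S$-adic group down to a $\QQ$-rational object: Ratner's theorem a priori only gives $M$ closed and ``algebraic'' locally at each place, and one must show the invariant subspace occurring in it is the $\QQ_S$-span of a genuine $\QQ$-rational (even $\ZZ[1/S]$-rational) subspace $V$, using that $\Lambda\cap M$ is a lattice in $M$ — this is the $S$-arithmetic rationality statement that makes the version of Ratner's theorem in \cite{Tomanov} (rather than the bare measure classification) essential. A secondary technical point is checking that $\Lambda = H(\ZZ[1/S])$ is genuinely a lattice in $H(\QQ_S)$ and that the orbit map $v\mapsto \Lambda(e,v)$ is a measure-space isomorphism of $X_S$ onto the appropriate $\GG(\QQ_S)$-invariant subset of $\Lambda\backslash H(\QQ_S)$ intertwining the two actions; this is routine but must be done carefully since the embedding of $\QQ_S^d$ into $\Lambda\backslash H(\QQ_S)$ is only a bijection onto its image after quotienting by $\ZZ[1/S]^d = \Lambda\cap\QQ_S^d$. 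Finally one should record that the resulting $(a,V)$ is not unique — $a$ may be translated by any element of $V(\QQ_S)+\ZZ[1/S]^d$ — but the proposition only asserts existence, so this causes no difficulty.
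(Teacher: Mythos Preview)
Your overall architecture --- pass to the homogeneous space of $\widetilde G = \GG(\QQ_S)\ltimes\QQ_S^d$ modulo the $S$-arithmetic lattice $\widetilde\Gamma = \GG(\ZZ[1/S])\ltimes\ZZ[1/S]^d$, apply the $S$-arithmetic Ratner theorem of \cite{Tomanov}, and read off $(a,V)$ --- matches the paper's exactly, and you correctly flag that the $\QQ$-rationality of the resulting subspace is the place where Tomanov's refinement (rather than bare measure classification) is essential.

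There is, however, a genuine gap in your passage from $\mu$ to $\widehat\mu$. Pushing $\mu$ forward under $v\mapsto\widetilde\Gamma(e,v)$ produces a measure supported on a single fiber of $p\colon\widetilde G/\widetilde\Gamma\to\GG(\QQ_S)/\GG(\ZZ[1/S])$, and that measure is invariant only under the \emph{discrete} group $\GG(\ZZ[1/S])$, not under $\GG(\QQ_S)$. Since a discrete group contains no one-parameter unipotent subgroups, Ratner's theorem simply does not apply to this pushforward. The missing ingredient is the \emph{suspension technique} (see \cite{Witte}): one must integrate the translates $t_g(\mu)$ over $g\GG(\ZZ[1/S])\in\GG(\QQ_S)/\GG(\ZZ[1/S])$ against the invariant probability measure $\nu$ there, obtaining a genuinely $\GG(\QQ_S)$-invariant ergodic probability measure $\widetilde\mu$ on $\widetilde G/\widetilde\Gamma$. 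It is to $\widetilde\mu$ that Ratner--Tomanov applies.

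Correspondingly, once Ratner gives $\widetilde\mu$ as the homogeneous measure on some orbit $H^a a\widetilde\Gamma/\widetilde\Gamma$, one still has to recover $\mu$ itself. The paper does this by a \emph{de-suspension}: one disintegrates $t_{a^{-1}}(\widetilde\mu)$ along the fibers of $p$ in two different ways --- once via the defining integral $\int t_{a^{-1}}t_g(\mu)\,d\nu$, once via the algebraic description of $\widetilde\mu$ (whose fiberwise pieces are translates of the Haar measure on the image of $H\cap\QQ_S^d$) --- and invokes uniqueness of conditional measures to conclude $\mu=\mu_{x+Y}$. A small additional point handled in the paper but not in your sketch is that Ratner only gives a finite-index subgroup $H$ of $\LL(\QQ_S)\ltimes V(\QQ_S)$; one needs the divisibility of $V(\QQ_S)$ to conclude $H\cap\QQ_S^d = V(\QQ_S)$ rather than a proper finite-index subgroup.
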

 
 \begin{proof}
 We consider the semi-direct product
 $$\widetilde{G}:=\GG(\QQ_S)\ltimes \QQ_S^d,$$ given by the
natural action of $\GG(\QQ_S)$ on $\QQ_S^d$. Then $\widetilde{G}$ is a locally compact  group containing 
$$\widetilde{\Ga}:=\GG(\ZZ[1/S])\ltimes \ZZ[1/S]^d$$ 
as discrete subgroup. Since $\GG(\QQ_S)$ is generated by unipotent
one-parameter subgroups, there is no
 non-trivial morphism $\GG\to GL_1$ defined
 over $\QQ$. It follows (see \cite[Theorem 5.6]{Borel}) that $\Ga:=\GG(\ZZ[1/S])$ has finite covolume in $\GG(\QQ_S)$  and so $\widetilde{\Ga}$ is an $S$-arithmetic lattice
 in $\widetilde{G}.$
 
We now use the ``suspension technique'' from  \cite{Witte} to obtain an ergodic  $\GG(\QQ_S)$-invariant probability measure $\widetilde{\mu}$ on $\widetilde{G}/\widetilde{\Ga}.$
Specifically, we embed $X_S$ as subset of   $\widetilde{G}/\widetilde{\Ga}$ in the obvious way. Observe that 
the action of $\GG(\ZZ[1/S])$ by automorphisms on $X_S$ becomes the  
action of $\GG(\ZZ[1/S])$ by translations on  $\widetilde{G}/\widetilde{\Ga}$
under this embedding.

View $\mu$ as a $\GG(\ZZ[1/S]))$-invariant probability measure on $\widetilde{G}/\widetilde{\Ga}$ which is supported on the image of $X_S$.
Let $\widetilde{\mu}$ be the probability measure on $\widetilde{G}/\widetilde{\Ga}$  defined by 
 $$\widetilde{\mu}= \int_{\GG(\QQ_S)/\Ga} t_g(\mu) d\nu(g \Ga),$$
where $\nu$ be the unique  $\GG(\QQ_S)$-invariant  probability 
measure on $\GG(\QQ_S)/\Ga$
and $t_g(\mu)$ denotes the image of $\mu$ under the translation by $g.$
Then $\widetilde{\mu}$ is $\GG(\QQ_S)$-invariant and is ergodic under this action.

By the refinement \cite[Theorem 2]{Tomanov} of Ratner's theorem,
there exists   a $\QQ$-algebraic subgroup $\LL$ of $\GG$, an $\LL(\QQ)$-invariant
vector subspace $V$ of $\QQ^d$, a finite index subgroup $H$ of 
$\LL(\QQ_S) \ltimes  V(\QQ_S)$, and an element $g\in  \widetilde{G}$ with the following properties:
\begin{itemize}
\item $\GG(\QQ_S) \subset H^g:=g H g^{-1};$
\item $H\cap \widetilde{\Ga} $ is a lattice in $H;$ 
\item $\widetilde{\mu}$ is the unique $H^g$-invariant probability measure on  $\widetilde{G}/\widetilde{\Ga}$  supported  on $gH\widetilde{\Ga}/ \widetilde{\Ga}=H^g g\widetilde{\Ga}/ \widetilde{\Ga}$.
\end{itemize}
Since $\widetilde{G}=\GG(\QQ_S)\ltimes \QQ_S^d,$ there exists
$g'\in  \GG(\QQ_S)$ such that $a:=g'g$ belongs to $\QQ_S^d.$  Then 
$\GG(\QQ_S) \subset H^a$ and, since $\widetilde{\mu}$ is $\GG(\QQ_S)$-invariant,  
 $\widetilde{\mu}$ coincides with the $H^a$-invariant probability measure   supported  on 
 $H^a a\widetilde{\Ga}/ \widetilde{\Ga}$. As a result, we may assume  above that 
 $g=a\in \QQ_S^d.$

 The image $t_{a^{-1}}(\widetilde{\mu})$  of $\widetilde{\mu}$ under the translation by $a^{-1}$  coincides with the unique $H$-invariant probability measure  
 on  $\widetilde{G}/\widetilde{\Ga}$   supported  on 
 $H\widetilde{\Ga}/ \widetilde{\Ga}.$ 
Observe that $\GG(\QQ_S) \subset H^a$ implies that 
 $$
g(a)-a\in V(\QQ_S) \quad\text{for every} \quad g\in \GG(\QQ_S),
 $$
 where we write $g(a)$ for $gag^{-1}.$
 
Let 
$$p:\widetilde{G}/\widetilde{\Ga}\to \GG(\QQ_S)/\Ga$$ 
be the natural  $\GG(\QQ_S)$-equivariant  map.
We have  $$
t_{a^{-1}}(\widetilde{\mu})= \int_{\GG(\QQ_S)/\Ga} t_{a^{-1}}\left(t_g(\mu)\right)
d\nu(g\Ga). \leqno{(*)} 
 $$
 and $t_{a^{-1}}(t_g(\mu))\left(p^{-1} (g\Ga/\Ga)\right)=1$ for every 
 $g\in \GG(\QQ_S).$
 So, the formula $(*)$ provides a decomposition of $t_{a^{-1}}(\widetilde{\mu})$
as integral over $\GG(\QQ_S)/\GG(\ZZ[1/S])$ of probability measures supported on the fibers of $p.$  

Now, knowing that $t_{a^{-1}}(\widetilde{\mu})$ is the  $H$-invariant probability measure  
supported on  $H\widetilde{\Ga}/ \widetilde{\Ga},$ we can perform a second 
such decomposition of $t_{a^{-1}}(\widetilde{\mu})$ over $\GG(\QQ_S)/\Ga$.
The measures supported on the fibers of $p$ in this last decomposition are translates of the normalized Haar measure $\mu_Y$ of the image $Y$ of  $H\cap \QQ_S^d$ 
in $X_S\cong \QQ_S^d\widetilde{\Ga}/ \widetilde{\Ga}.$
By uniqueness, it follows that  $t_{a^{-1}}(\mu)=\mu_Y$,  that is, $\mu=\mu_{x+Y}$, where 
$a$ is the image of $x$ in $X_S$
(for more details, see the proof of Corollary 5.8 in \cite{Witte}).

 To finish the proof, observe that, since $V(\QQ_S)$ is divisible, it has  no proper subgroup of finite index and so $H\cap \QQ_S^d=H\cap  V(\QQ_S)= V(\QQ_S)$. 
 \end{proof}
The    pairs $(x,Y)$ as in Proposition~\ref{Prop-InvProba-S-adic}
for which $\mu_{x+Y}$ is ergodic are characterized by  the following general result.

For a compact group $X,$ we denote by $\Aut(X)$ the group of
continuous automorphisms of $X$ and by   $\Aff(X)=\Aut(X)\ltimes X$
the group of affine transformations of $X$.

\begin{proposition}
\label{Prop-ErgodicityAffineSubspace}
 Let $G$  be a  countable group, $X$ be a  compact  abelian group 
 and $\alpha:G\to \Aut(X)$ an action of $G$ by automorphisms
 of $X.$  Let $x_0\in X$ and 
 let $Y$ be a connected closed subgroup  of $X$ such $x_0+Y$ is $G$-invariant.
 Then $Y$ is $G$-invariant and $\alpha_g(x_0)-x_0\in Y$ for every $g\in G.$
 Moreover, the following properties are equivalent:
 \begin{itemize}
 \item[(i)] $\mu_{x_0+Y}$ is not ergodic under the restriction of the  $G$-action to $x_0+Y;$
 \item[(ii)] there exists a proper closed  connected subgroup $Z$ of $Y$
 and a finite index subgroup $H$ of $G$ such that $\alpha_h(x)-x\in Z$ for every $x\in x_0+Y$ and $h\in H;$
 \item[(iii)]  for every $x\in x_0+Y$, the set  $\{\alpha_g(x)-x\mid g\in G\}$ is not dense
 in $Y;$ 
 \item[(iv)]  there exists a subset $A$ of $x_0+Y$ with $\mu_{x_0+Y}(A)>0$ such that
  $\{\alpha_g(x)-x\mid g\in G\}$ is not dense in $Y$ for every $x\in A.$
 \end{itemize} 
 \end{proposition}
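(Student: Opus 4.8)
Proof proposal.

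The plan is to dispose of the two preliminary assertions by a coset argument, reduce everything to an affine action via the substitution $x\mapsto x-x_0$, and then argue around the cycle $\mathrm{(iv)}\Rightarrow\mathrm{(i)}\Rightarrow\mathrm{(ii)}\Rightarrow\mathrm{(iii)}\Rightarrow\mathrm{(iv)}$, of which only $\mathrm{(i)}\Rightarrow\mathrm{(ii)}$ is substantial. For the preliminaries: since $x_0+Y$ is $G$-invariant, for each $g\in G$ the set $\alpha_g(Y)=(x_0-\alpha_g(x_0))+Y$ is simultaneously a closed subgroup of $X$ (the $\alpha_g$-image of $Y$) and a coset of $Y$; a coset of $Y$ that is a subgroup contains $0$, whence $\alpha_g(x_0)-x_0\in Y$ and $\alpha_g(Y)=Y$. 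Translating by $-x_0$ we may henceforth work with the affine action $\beta_g(y)=\alpha_g(x_0+y)-x_0=\alpha_g(y)+c_g$ of $G$ on $Y$, where $c_g:=\alpha_g(x_0)-x_0\in Y$ satisfies $c_{gh}=\alpha_g(c_h)+c_g$; then $\mu_{x_0+Y}$ becomes the normalized Haar measure $\mu_Y$ of $Y$ and $\beta_g(y)-y=c_g+(\alpha_g(y)-y)$, so the four conditions become their evident affine translates. Since $G$ is countable, I may and will also assume $X$ metrizable.

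Here are the easy implications. $\mathrm{(iv)}\Rightarrow\mathrm{(i)}$, by contraposition: if $\mu_Y$ is $G$-ergodic then, $\mu_Y$ being Haar measure and hence of full support, $\mu_Y$-almost every $y$ has a dense $G$-orbit; since $\overline{\{\beta_g(y):g\in G\}}=y+\overline{\{\beta_g(y)-y:g\in G\}}$, it follows that $\{\beta_g(y)-y:g\in G\}$ is dense in $Y$ for a.e.\ $y$, which contradicts $\mathrm{(iv)}$. $\mathrm{(iii)}\Rightarrow\mathrm{(iv)}$ is trivial (take $A=Y$). For $\mathrm{(ii)}\Rightarrow\mathrm{(iii)}$: writing $G=\bigsqcup_{i=1}^{n}g_iH$ and using $\beta_{g_ih}(y)-y=\alpha_{g_i}(\beta_h(y)-y)+(\beta_{g_i}(y)-y)$, one gets $\{\beta_g(y)-y:g\in G\}\subseteq\bigcup_{i=1}^{n}\bigl(\beta_{g_i}(y)-y+\alpha_{g_i}(Z)\bigr)$, a finite union of translates of proper closed subgroups of the connected group $Y$; each such translate is nowhere dense, so by Baire the union is nowhere dense, in particular not dense.

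For $\mathrm{(i)}\Rightarrow\mathrm{(ii)}$ I would use harmonic analysis on $Y$. On $L^2(Y,\mu_Y)$, with orthonormal basis $\widehat Y$, the Koopman operators $U_gf=f\circ\beta_g^{-1}$ act by $U_g\chi=\overline{(g\!\cdot\!\chi)(c_g)}\,(g\!\cdot\!\chi)$ with $g\!\cdot\!\chi:=\chi\circ\alpha_g^{-1}$, so they permute the lines $\CC\chi$ according to the dual $G$-action on $\widehat Y$. If $\mu_Y$ is non-ergodic, pick a non-constant $G$-invariant $f=\sum a_\chi\chi$; its support in $\widehat Y$ is $G$-invariant, $|a_\chi|$ is constant along $G$-orbits, and, $f$ being non-constant and square-summable, its support contains a \emph{finite} non-trivial $G$-orbit $\mathcal O$. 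Put $\Lambda:=\langle\mathcal O\rangle$: a $G$-invariant, finitely generated, torsion-free (as $Y$ is connected) subgroup of $\widehat Y$, so $\Lambda\cong\ZZ^{r}$ with $r\geq1$; an automorphism of $\Lambda$ fixing $\mathcal O$ pointwise is trivial, so $H_0:=\ker(G\to\Aut(\Lambda))$ has finite index in $G$ and, $\widehat Y$ being torsion-free, also fixes pointwise the purification $\overline\Lambda:=\{\chi\in\widehat Y:n\chi\in\Lambda\text{ for some }n\geq1\}$. Set $Z:=\overline\Lambda^{\perp}\leq Y$: because $\widehat Z=\widehat Y/\overline\Lambda$ is torsion-free, $Z$ is connected; it is proper (as $\overline\Lambda\neq0$) and equals the identity component of $\Lambda^{\perp}$. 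For $h\in H_0$, fixing $\overline\Lambda$ pointwise forces $\alpha_h(y)-y\in Z$ for all $y\in Y$, while $U_hf=f$ read off on $\mathcal O$ gives $\chi(c_h)=1$ for $\chi\in\mathcal O$, i.e.\ $c_h\in\Lambda^{\perp}$.

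It then remains to pass to a finite-index subgroup $H\leq H_0$ on which $c_h\in Z$: for such $H$ one has $\beta_h(y)-y=c_h+(\alpha_h(y)-y)\in Z$ for all $y\in Y$ and $h\in H$, which (translated back) is $\mathrm{(ii)}$. Now $h\mapsto c_h+Z$ is a $G$-equivariant homomorphism $\Psi\colon H_0\to\Lambda^{\perp}/Z=\pi_0(\Lambda^{\perp})\cong\widehat{\overline\Lambda/\Lambda}$; each character of $\pi_0(\Lambda^{\perp})$ composes with $\Psi$ to a character of $H_0$ of finite order (a lift $\psi$ of a character of order $n$ has $n\psi\in\Lambda$ while $c_h\in\Lambda^{\perp}$), and, since $\chi(c_g)=a_{g^{-1}\cdot\chi}\,\overline{a_\chi}$ for $\chi\in\mathcal O$ and a suitable power of any $\psi\in\overline\Lambda$ lies in $\Lambda$, the set $\{\psi(c_g):g\in G\}$ is finite for every $\psi\in\overline\Lambda$. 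I expect the crux — the main obstacle — to be extracting from these observations that $\Psi$ has \emph{finite image}, equivalently that the disconnected part of $\Lambda^{\perp}$ seen by the translation cocycle is finite, after which $H:=\ker\Psi$ does the job. It is reassuring that this difficulty evaporates in the situation relevant to the paper, where $X$ is an adelic solenoid and $\widehat Y$ a $\QQ$-vector space: there $H_0$ fixes $\overline\Lambda=\Lambda\otimes\QQ$ pointwise $\QQ$-linearly, hence so does $\alpha_h^{-1}$, and a direct pairing computation gives $c_h\in\overline\Lambda^{\perp}=Z$ with no further passage to a subgroup.
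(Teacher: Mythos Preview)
Your handling of the preliminaries and of the implications $\mathrm{(iv)}\Rightarrow\mathrm{(i)}$, $\mathrm{(iii)}\Rightarrow\mathrm{(iv)}$, $\mathrm{(ii)}\Rightarrow\mathrm{(iii)}$ is correct and matches the paper's argument (your Baire step for the last is a clean variant of the paper's observation that the image in $Y/Z$ is finite). For $\mathrm{(i)}\Rightarrow\mathrm{(ii)}$ the paper does not argue directly but invokes \cite[Proposition~1]{BekkaFrancini}; your self-contained harmonic-analytic route---extract a finite dual orbit $\mathcal O$, set $\Lambda=\langle\mathcal O\rangle$, $Z=\overline\Lambda^{\perp}$, and try to push the translation cocycle into $Z$ on a finite-index subgroup---is the natural one and is correctly carried out up to the point you flag.

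That gap is real, and your ``direct pairing computation'' does not close it even when $\widehat Y$ is a $\QQ$-vector space: fixing $\overline\Lambda$ pointwise under the dual action controls the automorphism part $\alpha_h(y)-y$, not the translation part $c_h$. Indeed, the implication $\mathrm{(i)}\Rightarrow\mathrm{(ii)}$ with $Z$ \emph{connected} fails in the stated generality. Take $Y$ the one-dimensional solenoid (so $\widehat Y=\QQ$), $X=Y\times Y$, $G=\ZZ$ acting by $\alpha_n(y,y')=(y+ny',y')$, and $x_0=(0,y_0)$ with $y_0\in\ZZ^{\perp}\setminus\{0\}$; then $x_0+(Y\times\{0\})$ is $G$-invariant, the induced action is translation by $y_0$ (non-ergodic, since the character $1\in\QQ$ is fixed), yet the only proper closed connected subgroup of $Y$ is $\{0\}$ and $my_0\neq 0$ for every $m\geq 1$. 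In your notation this is exactly $\Lambda=\ZZ$, $\overline\Lambda=\QQ$, $\Psi(h)=hy_0$ with infinite image. What actually rescues your argument in the paper's applications is not that $\widehat Y$ is a $\QQ$-vector space but that $G=\GG(\QQ)$ is generated by unipotent one-parameter subgroups, hence by divisible subgroups: since $\Lambda^{\perp}/Z$ has dual $\overline\Lambda/\Lambda$, a torsion group, it is profinite, and a group generated by divisible subgroups admits no nontrivial homomorphism into a profinite abelian group; thus $\Psi\equiv 0$ and $c_g\in Z$ for every $g\in G$, with no passage to a subgroup needed. With this correction your argument is complete for the situations in which the proposition is actually invoked.
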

 \begin{proof}  
 The fact that  $Y$ is $G$-invariant and that $\alpha_g(x)-x\in Y$ for every $g\in G$ is obvious.
 
 The homeomorphism $t:x_0+Y\to Y$ given by the translation by $-x_0$
intertwines the  action $\alpha$ of $G$ on $x_0+Y$ 
with the action $\beta:G\to \Aff(Y)$ by affine transformations of $Y$,
given  by 
$$\beta_g(y) =\alpha_g(y)+ \alpha_g(x_0)-x_0\tout g\in G, y\in Y.$$ 
Moreover, the image of $\mu_{x_0+Y}$ under $t$ is the Haar measure $\mu_Y$ on $Y.$

Assume that the action $\beta$ is not ergodic. Then there exists 
a proper closed connected subgroup $Z$ of $Y$ 
which invariant under the action $\alpha$ of $G$ and a finite index subgroup $H$ of $G$ 
such that the image of $H$ in $\Aff(Y/Z)$, for the action induced by $\beta$, is trivial
(see \cite[Proposition 1]{BekkaFrancini}). 
This means that $\alpha_h(x)-x\in Z$ for every $x\in x_0+Y$ and $h\in H.$ 
So, (i) implies (ii). 

Assume that (ii) holds and let $x\in x_0+Y.$ Then the image of 
the set $\{\alpha_g(x)- x\mid g\in G\}$ in $Y/Z$ is  finite.
However, since $Y$ is connected, $Y/Z$ is infinite
and so   $\{\alpha_g(x)- x\mid g\in G\}$ is not dense in $Y.$ So, (ii) implies (iii).

The fact that (iii) implies (iv) is obvious.
Assume that $\beta$ is ergodic. Since the support of $\mu_Y$ is
$Y,$  for $\mu_Y$-almost every $y\in Y,$ the $\beta(G)$-orbit of $y$  is dense
in $Y,$  that is,
$\{\alpha_g(x)- x\mid g\in G\}$ is dense in $X$ for $\mu_{x_0+Y}$-almost every 
$x\in x_0+Y.$ So, (iv) implies (i).
 \end{proof}

 We  will need  a description of the $\GG(\ZZ[1/S])$-invariant
 (not necessarily ergodic) probability measures on $X_S.$ 
 For this,   we adapt for our situation  some ideas from \cite[\S 2]{Mozes-Shah},
 where  such description was given in the context of real Lie groups.

 Let  $\vfi: \QQ_S^d\to X_S= \QQ_S^d/\ZZ[1/S]^d$ denote the canonical projection.
 Observe that, if  $V$ is a $\GG(\QQ)$-invariant  linear subspace  of $\QQ^d$
and if  $a\in  \QQ_S^d$ is such that $g(a)\in a + V(\QQ_S)$ for all $ g\in \GG(\ZZ[1/S]),$
then $\vfi(a+ V(\QQ_S))$ is a closed and $\GG(\ZZ[1/S])$-invariant   subset of $X_S$

Denote by $\mathcal{H}$ the set  of  $\GG(\QQ)$-invariant  linear subspaces of $\QQ^d.$   For $V\in \mathcal{H}$, define $\mathcal{N}(V, S)\subset  \QQ_S^d$ to be the 
set of $a\in \QQ_S^d$ with the following properties:
\begin{itemize}
\item $g(a)\in a + V(\QQ_S)$ for every  $g\in \GG(\ZZ[1/S])$ and 
\item $\vfi(\{ g(a)-a\mid g\in \GG(\ZZ[1/S])\})$ is dense in $\vfi(V(\QQ_S)).$
\end{itemize}

 \begin{lemma}
 \label{Lemma-GenericSingular}
  For $V, W\in \mathcal{H}$, we have $\vfi(\mathcal{N}(V, S))\cap \vfi(\mathcal{N}(W, S))\neq \emptyset$
 if and only if $V=W.$
 \end{lemma}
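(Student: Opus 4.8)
The plan is to prove the contrapositive of the non-trivial direction: if $V \neq W$ then $\vfi(\mathcal{N}(V,S)) \cap \vfi(\mathcal{N}(W,S)) = \emptyset$. The converse implication (if $V = W$, then certainly $\vfi(\mathcal{N}(V,S))$ meets itself, at least when $\mathcal{N}(V,S) \neq \emptyset$ — and one should note that the empty case is vacuously fine, or observe that $V \in \mathcal{H}$ always gives $\mathcal{N}(V,S) \ni 0$ when $0$ is $\GG(\QQ)$-fixed, which it is) is immediate and I would dispatch it in one line.

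So suppose $a \in \mathcal{N}(V,S)$ and $b \in \mathcal{N}(W,S)$ with $\vfi(a) = \vfi(b)$, i.e. $a - b \in \ZZ[1/S]^d \subset \QQ^d$. The first key step is to use the density conditions to identify the closed subsolenoids they generate. By definition of $\mathcal{N}(V,S)$, the closure of $\vfi(\{g(a) - a \mid g \in \GG(\ZZ[1/S])\})$ is exactly $\vfi(V(\QQ_S))$, which is the subsolenoid $V(\QQ_S)/V(\ZZ[1/S])$; similarly for $b$ and $W$. Now, since $a - b \in \QQ^d$ and $V, W$ are $\GG(\QQ)$-invariant, for every $g \in \GG(\ZZ[1/S])$ we have $g(a) - a$ and $g(b) - b$ differing by $g(a-b) - (a-b) \in$ (the $\QQ$-span issue) — more precisely $g(a) - a - (g(b) - b) = g(a-b) - (a-b)$, and since $a - b \in \QQ^d$ this lies in the subspace of $\QQ^d$ spanned by $\{g(v) - v : g \in \GG(\QQ), v \in \QQ(a-b)\}$, which is a $\GG(\QQ)$-invariant subspace contained in both... here I must be careful. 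The cleaner route: $\vfi(a - b) = 0$, so $\vfi(g(a) - a) = \vfi(g(a) - a) $ and the point is that the two orbit-difference sets $\{g(a)-a\}$ and $\{g(b)-b\}$ have the same image under $\vfi$ up to the $\GG(\QQ)$-orbit of $a - b$. The cleanest formulation is: let $V_0 \subseteq \QQ^d$ be the smallest $\GG(\QQ)$-invariant subspace containing $a - b$ (well-defined since $a-b \in \QQ^d$). Then for all $g \in \GG(\ZZ[1/S])$, $g(a) - a \in b$'s translate structure: specifically $g(a) - a = (g(b) - b) + (g(a-b) - (a-b))$ with the last term in $V_0(\QQ_S)$.

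The decisive step is then a comparison of closures inside $X_S$. Taking closures of the sets $\vfi(\{g(a)-a\})$ and $\vfi(\{g(b)-b\})$, and using that $\vfi(V_0(\QQ_S))$ is a \emph{compact connected} subgroup, I would argue $\vfi(V(\QQ_S)) \subseteq \vfi(W(\QQ_S)) + \vfi(V_0(\QQ_S))$ and symmetrically, hence $\vfi(V(\QQ_S)) = \vfi(W(\QQ_S))$ as subsets of $X_S$ (after checking the $V_0$ contribution is absorbed — one needs $V_0 \subseteq V$ and $V_0 \subseteq W$, which follows because $g(a)-a \in V(\QQ_S) \cap \QQ^d = V$ for all $g \in \GG(\ZZ[1/S])$ forces the $\GG(\QQ)$-span of the relevant $\QQ$-part, and similarly $a - b \in \QQ^d$ with $\GG(\QQ)$ Zariski-dense in... ). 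Once $\vfi(V(\QQ_S)) = \vfi(W(\QQ_S))$, taking preimages under $\vfi$ and using that these are the full $\ZZ[1/S]^d$-saturations gives $V(\QQ_S) + \ZZ[1/S]^d = W(\QQ_S) + \ZZ[1/S]^d$ in $\QQ_S^d$; intersecting with $\QQ^d$ (diagonally embedded) and using $V(\QQ_S) \cap \QQ^d = V$, $W(\QQ_S) \cap \QQ^d = W$ yields $V = W$. I expect the main obstacle to be precisely this bookkeeping with the auxiliary subspace $V_0$ generated by $a - b$: one must show it causes no obstruction, which hinges on Zariski-density of $\GG(\ZZ[1/S])$ in $\GG$ (since $\GG$ is generated by unipotent one-parameter subgroups, hence connected, and $\ZZ[1/S]$-points are Zariski-dense by Borel density-type arguments already invoked via \cite[Theorem 5.6]{Borel}) to ensure the $\GG(\ZZ[1/S])$-orbit differences span the same invariant subspaces as the $\GG(\QQ)$-orbit differences. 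The separation statement then follows formally from the injectivity of $V \mapsto V(\QQ_S) + \ZZ[1/S]^d$ on $\mathcal{H}$.
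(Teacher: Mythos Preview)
Your overall strategy---show that $\vfi(V(\QQ_S)) = \vfi(W(\QQ_S))$ and then deduce $V = W$---matches the paper's, but you miss the one-line observation that makes the first half immediate, and your workaround via the auxiliary subspace $V_0$ has a genuine gap.

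The point you overlook is that $a - b$ lies not merely in $\QQ^d$ but in $\ZZ[1/S]^d$, and $\GG(\ZZ[1/S]) \subset GL_d(\ZZ[1/S])$ preserves $\ZZ[1/S]^d$. Hence $g(a-b) - (a-b) \in \ZZ[1/S]^d$ for every $g \in \GG(\ZZ[1/S])$, so $\vfi(g(a)-a) = \vfi(g(b)-b)$ \emph{exactly}, not just up to some correction. The two orbit-difference sets therefore have the \emph{same} image in $X_S$, and taking closures gives $\vfi(V(\QQ_S)) = \vfi(W(\QQ_S))$ at once. This is what the paper does.

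Your detour through $V_0$ (the $\GG(\QQ)$-invariant span of $a-b$) is not only unnecessary but unjustified: the assertion that $V_0 \subseteq V$ and $V_0 \subseteq W$ is false in general. For a concrete obstruction, embed $SL_2$ in $GL_3$ via the upper-left block, take $V = \QQ^2 \times \{0\}$, and note that $\mathcal{N}(V,S)$ contains points with arbitrary third coordinate; choosing $a,b \in \mathcal{N}(V,S)$ with $a - b = (0,0,1)$ gives $V_0 = \{0\}^2 \times \QQ \not\subset V$. Your sketch of why $V_0 \subseteq V$ (``$g(a)-a \in V(\QQ_S) \cap \QQ^d = V$'') is also off: $g(a)-a$ lies in $V(\QQ_S)$ but has no reason to lie in $\QQ^d$, since $a \in \QQ_S^d$.

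For the second half, the paper argues slightly differently from your preimage-and-intersect approach: from $\vfi(V(\QQ_S)) = \vfi(W(\QQ_S))$ one gets $V(\RRR) \subset W(\RRR) + \ZZ[1/S]^d$ inside $\QQ_S^d$, and connectedness of $V(\RRR)$ forces $V(\RRR) \subset W(\RRR)$, hence $V \subset W$; symmetry finishes. Your version (intersect $V(\QQ_S) + \ZZ[1/S]^d$ with the diagonal $\QQ^d$) actually yields $V + \ZZ[1/S]^d = W + \ZZ[1/S]^d$ as subsets of $\QQ^d$, not $V = W$ directly, so a further small argument would still be needed there.
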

 \begin{proof}
 Assume that $\vfi(\mathcal{N}(V, S))\cap \vfi(\mathcal{N}(W, S))\neq \emptyset$. So, there exist $a\in \mathcal{N}(V, S)$ and $b\in \ZZ[1/S]^d$ such that $a+b\in \mathcal{N}(W, S).$ It follows that 
 $\vfi(\{ g(a)-a\mid g\in \GG(\ZZ[1/S])\})$ is a dense subset of $\vfi(V(\QQ_S))$ and of $\vfi(W(\QQ_S)).$
 Hence, $\vfi(V(\QQ_S))=\vfi(W(\QQ_S))$.  This implies that the $\RRR$-vector space
 $V(\RRR)$ is contained in $W(\RRR)+\ZZ[1/S]^d$ and so in $W(\RRR)$, by connectedness.
 Hence, $V\subset W.$ Similarly, we have $W\subset V$.
  \end{proof}
 
 We can now give  a  description of the finite $\GG(\ZZ[1/S])$-invariant measures on $X_S.$ 
  \begin{proposition}
 \label{Prop-InvProba-S-adic-bis}
 Assume that $\GG(\QQ_S)$ is generated by unipotent one-parameter 
 subgroups and let $\mu\in \Prob(X_S)$ be a $\GG(\ZZ[1/S])$-invariant probability measure
 on $X_S.$  
 For $V\in \mathcal{H},$  denote by   $\mu_V$   the 
 restriction of $\mu$ to $\vfi(\mathcal{N}(V, S)).$
 \begin{itemize}
 \item[(i)] We have  
 $$\mu\left(\bigcup_{V\in\mathcal{H}}\vfi(\mathcal{N}(V, S))\right)=1;$$
  moreover, $\mu_V(\vfi(\mathcal{N}(V', S))=0$ for all $V,V'\in \H$ with $V\neq V';$
  so, we have a  decomposition 
 $$
 \mu= \bigoplus_{V\in \mathcal{H}} \mu_V.
 $$
 \item[(ii)] Let  $V\in \mathcal{H}$ be such that $\mu_V\neq 0.$ Then $\mu_V$ is $\GG(\ZZ[1/S])$-invariant.
 Moreover, if 
 $$\mu_V=\int_\Omega \nu_{V,\omega} d\omega$$
  is a decomposition of $\mu_V$ into
 ergodic $\GG(\ZZ[1/S])$-invariant  components $\nu_{V,\omega}$, then, 
 for every $\omega\in \Omega,$ we have $\nu_{V,\omega}= \mu_{x_\omega+Y},$ 
  where $Y=\vfi(V(\QQ_S))$ and $x_\omega=\vfi(a_\omega)$ for some   $a_\omega\in \mathcal{N}(V, S).$
 \end{itemize}

 \end{proposition}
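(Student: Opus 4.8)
The plan is to combine the ergodic decomposition of $\mu$ with the classification of ergodic $\GG(\ZZ[1/S])$-invariant measures provided by Proposition~\ref{Prop-InvProba-S-adic} and with the disjointness statement of Lemma~\ref{Lemma-GenericSingular}. First I would write $\mu = \int_\Omega \nu_\omega\, d\omega$ for the ergodic decomposition of $\mu$ under $\GG(\ZZ[1/S])$. By Proposition~\ref{Prop-InvProba-S-adic}, for almost every $\omega$ there is a pair $(a_\omega, V_\omega)$ with $V_\omega\in\mathcal H$, $g(a_\omega)-a_\omega\in V_\omega(\QQ_S)$ for every $g\in\GG(\QQ_S)$, and $\nu_\omega=\mu_{x_\omega+Y_\omega}$ where $Y_\omega=\vfi(V_\omega(\QQ_S))$ and $x_\omega=\vfi(a_\omega)$.

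The crucial intermediate step is to show that $\nu_\omega$ is concentrated on $\vfi(\mathcal N(V_\omega,S))$ and that, after modifying $a_\omega$, one may take $a_\omega\in\mathcal N(V_\omega,S)$. For the first defining condition of $\mathcal N(V_\omega,S)$ I would check that every point of the coset $a_\omega+V_\omega(\QQ_S)$ qualifies: for $b\in V_\omega(\QQ_S)$ and $g\in\GG(\ZZ[1/S])\subset\GG(\QQ)$ one has $g(a_\omega+b)-(a_\omega+b)=(g(a_\omega)-a_\omega)+(g\cdot b-b)$, where the first summand lies in $V_\omega(\QQ_S)$ by the choice of $a_\omega$ and the second lies there because a $\QQ$-rational linear automorphism preserving the $\QQ$-subspace $V_\omega$ preserves each $V_\omega(\QQ_p)$, hence $V_\omega(\QQ_S)$. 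For the second (density) condition I would invoke the last step of the proof of Proposition~\ref{Prop-ErgodicityAffineSubspace}: since $\nu_\omega=\mu_{x_\omega+Y_\omega}$ is ergodic and the Haar measure of the connected group $Y_\omega$ has full support, the set $\{\,g(x)-x\mid g\in\GG(\ZZ[1/S])\,\}$ is dense in $Y_\omega$ for $\nu_\omega$-almost every $x\in x_\omega+Y_\omega$. Combining the two conditions gives $\nu_\omega(\vfi(\mathcal N(V_\omega,S)))=1$; replacing $a_\omega$ by $a_\omega+b$ for a suitable $b\in V_\omega(\QQ_S)$, which leaves $\mu_{x_\omega+Y_\omega}$ unchanged, one may also assume $a_\omega\in\mathcal N(V_\omega,S)$.

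To conclude, I would use that $\QQ^d$ has only countably many linear subspaces, so $\mathcal H$ is countable, and that the Borel sets $\vfi(\mathcal N(V,S))$, $V\in\mathcal H$, are pairwise disjoint by Lemma~\ref{Lemma-GenericSingular}. Setting $\Omega_V=\{\omega\in\Omega:\nu_\omega(\vfi(\mathcal N(V,S)))=1\}$ partitions $\Omega$ up to a null set into measurable pieces; integrating the ergodic components over $\Omega_V$ recovers the restriction $\mu_V=\mu|_{\vfi(\mathcal N(V,S))}$, and summing over the countable family $\mathcal H$ yields $\mu\bigl(\bigcup_{V\in\mathcal H}\vfi(\mathcal N(V,S))\bigr)=1$ and $\mu=\bigoplus_{V\in\mathcal H}\mu_V$, with $\mu_V(\vfi(\mathcal N(V',S)))=0$ for $V\neq V'$ by disjointness; this gives (i). For (ii), if $\mu_V\neq 0$ then $\mu_V$ is the restriction of the $\GG(\ZZ[1/S])$-invariant measure $\mu$ to the $\GG(\ZZ[1/S])$-invariant Borel set $\vfi(\mathcal N(V,S))$, hence itself $\GG(\ZZ[1/S])$-invariant, and its ergodic components are up to a null set exactly the $\nu_\omega$ with $\omega\in\Omega_V$; by essential uniqueness of the ergodic decomposition, any decomposition $\mu_V=\int_\Omega\nu_{V,\omega}\,d\omega$ into ergodic components has almost all components of this form, namely $\mu_{x_\omega+Y}$ with $Y=\vfi(V(\QQ_S))$ and $x_\omega=\vfi(a_\omega)$, $a_\omega\in\mathcal N(V,S)$, by the intermediate step. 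The main obstacle is that intermediate step, and specifically the density condition: one needs it to hold not at a single point but on a full-measure subset of the coset, which is exactly what the final part of the proof of Proposition~\ref{Prop-ErgodicityAffineSubspace} supplies.
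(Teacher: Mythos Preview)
Your proposal is correct and follows essentially the same approach as the paper: ergodic decomposition of $\mu$, application of Proposition~\ref{Prop-InvProba-S-adic} to each ergodic component, use of Proposition~\ref{Prop-ErgodicityAffineSubspace} to obtain full-measure density of $\{g(x)-x\}$ in $Y_\omega$, and then Lemma~\ref{Lemma-GenericSingular} together with countability of $\mathcal H$ to assemble the direct sum. Your write-up is in fact slightly more detailed than the paper's in verifying the first defining condition of $\mathcal N(V_\omega,S)$ for all points of the coset and in justifying that $a_\omega$ may be chosen inside $\mathcal N(V_\omega,S)$, but the structure is the same.
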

 \begin{proof}
 Let 
 $$\mu=\int_\Omega \nu_{\omega} d\omega$$
be  a decomposition of $\mu$ into
  $\GG(\ZZ[1/S])$-invariant  ergodic probability measures $\nu_{\omega}$
 on $X_S.$
 
 Fix $\omega\in \Omega$. By Proposition~\ref{Prop-InvProba-S-adic},
 there exists  $a_\omega \in \QQ_S^d$ and $V_\omega\in\H$ such that 
  $g(a_\omega)\in a_\omega + V_\omega(\QQ_S)$ for every  $g\in \GG(\QQ_S)$ 
  and $\nu_\omega= \mu_{x_\omega+Y_\omega}$, where  $Y_\omega=\vfi(V_\omega(\QQ_S))$ and $x_\omega=\vfi(a_\omega)$.
  Since $\mu_{x_\omega+Y_\omega}$ is ergodic, there exists
  a subset $A_\omega$ of  $x_\omega+Y_\omega$ with $\mu_{x_\omega+Y_\omega}(A_\omega)=1$ such that  the $\GG(\ZZ[1/S])$-orbit of $x$
  is dense in $x_\omega+Y_\omega$ for every $x\in A_\omega$
  (see Proposition~\ref{Prop-ErgodicityAffineSubspace}). It is clear that
  $x\in  \vfi(\mathcal{N}(V_\omega, S))$ for every $x\in A_\omega.$ 
   It follows that 
  $\nu_\omega (\vfi(\mathcal{N}(V_\omega, S))=1$ for every $\omega\in \Omega$
  and hence $\mu(\bigcup_{V\in\mathcal{H}}\vfi(\mathcal{N}(V, S))=1.$
 Since the measurable subsets $\vfi(\mathcal{N}(V, S))$ of $X_S$ are mutually disjoint
 (Lemma~\ref{Lemma-GenericSingular}) and since $\mathcal{H}$ is countable,
 we have a direct sum decomposition
 $$
 \mu=\bigoplus_{V\in \H} \int_{\omega: V_\omega=V}  \nu_{\omega} d\omega
$$
and $\mu_V= \int_{\omega: V_\omega=V}  \nu_{\omega} d\omega$
with $\nu_\omega= \mu_{x_\omega +Y},$ where $Y=V(\QQ_S)$ and $x_\omega=\vfi(a_\omega)$ for some $a_\omega\in \mathcal{N}(V, S).$

 \end{proof} 
  We will later need the following lemma.
 \begin{lemma}
 \label{Lemma-NonErgInvMeasure}
 Let $V$ be a $\GG(\QQ)$-invariant  linear subspace  of $\QQ^d$
 and $S$ a finite subset  of $\P\cup \{\infty\}.$ There exists 
 exists a linear subspace $W^S$  of $\QQ^d$ containing $V$  such that 
  $$
  \left \{a \in \QQ^d\mid g(a)\in a + V(\QQ_S) \quad \text{for all}\quad g\in \GG(\ZZ[1/S])\right\}=W^S(\QQ_S).$$ 
   \end{lemma}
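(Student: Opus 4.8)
The plan is to show that the set
$$
W^S:=\left\{a\in\QQ_S^d\mid g(a)\in a+V(\QQ_S)\ \text{for all}\ g\in\GG(\ZZ[1/S])\right\}
$$
is an $\RRR$-linear subspace (more precisely, a $\QQ_S$-submodule) of $\QQ_S^d$ which is \emph{defined over} $\QQ$ in the sense that it is the $\QQ_S$-span of a $\QQ$-subspace $W^S\subset\QQ^d$ containing $V$. Note first that the statement to prove should read $W^S(\QQ_S)$ on the right, so I will prove the equality with $W^S$ a $\QQ$-subspace and $W^S(\QQ_S)=\prod_{p\in S}W^S(\QQ_p)$ its $\QQ_S$-span; the containment $V\subseteq W^S$ is immediate because $V(\QQ_S)$ is $\GG(\QQ)$-invariant, hence $\GG(\ZZ[1/S])$-invariant, so every $a\in V(\QQ_S)$ lies in the set.

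First I would observe that passing to the quotient $E:=\QQ^d/V$, on which $\GG(\QQ)$ acts linearly (since $V$ is $\GG(\QQ)$-invariant), reduces the problem to the case $V=0$: the set in question is the preimage under $\QQ_S^d\to E(\QQ_S)$ of
$$
\{\bar a\in E(\QQ_S)\mid g(\bar a)=\bar a\ \text{for all}\ g\in\GG(\ZZ[1/S])\},
$$
i.e.\ the fixed-point set of $\GG(\ZZ[1/S])$ acting on $E(\QQ_S)=\prod_{p\in S}E(\QQ_p)$. So it suffices to show that this fixed-point set is $\overline{W}(\QQ_S)$ for some $\QQ$-subspace $\overline{W}$ of $E$, and then take $W^S$ to be its preimage in $\QQ^d$. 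For each $p\in S$ separately, the fixed-point set of the group $\GG(\ZZ[1/S])$ acting $\QQ_p$-linearly on $E(\QQ_p)$ is a $\QQ_p$-linear subspace, namely $\bigcap_{g}\ker(g-\mathrm{id})$; call it $F_p\subseteq E(\QQ_p)$. The content of the lemma is that $F_p=\overline{W}(\QQ_p)$ for one and the same $\QQ$-rational subspace $\overline{W}$, independent of $p$.

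The key point is a Zariski-density argument. Since $\GG$ is generated by unipotent one-parameter subgroups over $\QQ$ and every such subgroup is $\QQ$-split, the $\QQ$-points $\GG(\QQ)$ are Zariski-dense in $\GG$; moreover $\GG(\ZZ[1/S])$ already contains, for each unipotent one-parameter $\QQ$-subgroup $u:\GG_a\to\GG$, all the elements $u(t)$ with $t\in\ZZ[1/S]$, and $\ZZ[1/S]$ is Zariski-dense in $\GG_a=\RRR$; hence $\GG(\ZZ[1/S])$ is itself Zariski-dense in $\GG$. Therefore the condition ``$g(a)=a$ for all $g\in\GG(\ZZ[1/S])$'' on a point $a$ of the $\QQ$-vector space $E$ (or its base change to any field) is equivalent to ``$g(a)=a$ for all $g\in\GG$'', i.e.\ to $a$ lying in the fixed subspace $\overline{W}:=E^{\GG}$, which is a $\QQ$-defined subspace of $E$. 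Consequently $F_p=\overline{W}\otimes_\QQ\QQ_p=\overline{W}(\QQ_p)$ for each $p\in S$, and pulling back to $\QQ^d$ gives the desired $\QQ$-subspace $W^S$ with $V\subseteq W^S$ and
$$
\left\{a\in\QQ_S^d\mid g(a)\in a+V(\QQ_S)\ \forall g\in\GG(\ZZ[1/S])\right\}=W^S(\QQ_S).
$$

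The main obstacle is establishing the Zariski-density of $\GG(\ZZ[1/S])$ in $\GG$ cleanly — one must make sure the generation hypothesis on $\GG(\QQ_S)$ (generation by unipotent one-parameter subgroups) really forces $\GG$ itself, as a $\QQ$-group, to be generated by $\QQ$-rational unipotent one-parameter subgroups, so that $\ZZ[1/S]$-points of those already fill out a Zariski-dense subset; alternatively one invokes that $\GG$, being generated by unipotent subgroups, is connected with no nontrivial $\QQ$-character, whence $\GG(\ZZ[1/S])$ is a lattice in $\GG(\QQ_S)$ (as used in the proof of Proposition~\ref{Prop-InvProba-S-adic}) and in particular Zariski-dense in $\GG$. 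Everything else — the reduction modulo $V$, the identification of fixed-point sets with kernels of $g-\mathrm{id}$, and the compatibility of fixed subspaces under base change — is linear algebra over a field together with the standard fact that a $\QQ$-subspace and its various $\QQ_p$-base changes are cut out by the same linear equations.
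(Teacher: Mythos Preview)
Your argument is correct, but it takes a longer route than the paper's. The paper's proof is purely elementary linear algebra: choose a complement $V_0$ of $V$ in $\QQ^d$ with projection $\pi_0:\QQ^d\to V_0$, observe that for each fixed $g\in\GG(\ZZ[1/S])\subset GL_d(\QQ)$ the set $\{a:g(a)-a\in V(\QQ_p)\}$ is the $\QQ_p$-points of the $\QQ$-subspace $W_g:=\ker\bigl(\pi_0\circ(g-I)\bigr)$, and take $W^S:=\bigcap_g W_g$. Since the intersection stabilises after finitely many $g$'s (by dimension), $W^S(\QQ_p)=\bigcap_g W_g(\QQ_p)$ for each $p\in S$, and that is all one needs. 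This is precisely what you call ``linear algebra together with compatibility under base change'' in your final paragraph; in the paper that \emph{is} the proof, with no appeal to Zariski density.

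Your Zariski-density step is not wrong, but it is doing extra work: it identifies the $\QQ$-subspace $W^S$ with the preimage of the $\GG$-fixed subspace $E^{\GG}$, which shows in particular that $W^S$ can be chosen \emph{independent of $S$}. That is a genuine bonus (the paper later has to argue separately, in the second step of Theorem~\ref{Theo-ErgodicityAdelic}, that the family $(W^{S_n})_n$ stabilises). On the other hand, your approach requires the unipotent-generation hypothesis on $\GG$ to secure Zariski density of $\GG(\ZZ[1/S])$, whereas the lemma as stated carries no such hypothesis and the paper's proof works for an arbitrary $\QQ$-algebraic subgroup $\GG\subset GL_d$. So: your proof is valid in the context where the lemma is applied and yields a slightly stronger conclusion, while the paper's proof is shorter, more elementary, and proves the lemma in the generality in which it is stated.
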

 \begin{proof}
 Choose a complement $V_0$ of $V$ in $\QQ^d$ and let 
 $\pi_0:  \QQ^d\to V_0$ be the 
 corresponding projection. Let $p\in \P\cup \{\infty\}.$ Then 
 $$\QQ_p^d=V(\QQ_p)\oplus V_0(\QQ_p)$$
  and  the linear  extension of $\pi_0$, again denoted by $\pi_0$, is the corresponding projection  $\QQ_p^d\to V_0(\QQ_p)$. 
 
 Let  $g\in GL_d(\QQ)$. Denote by $W_g\subset \QQ^d$ the kernel of $\pi_0\circ (g-I_{\QQ^d}).$
 For $a\in \QQ_p^d,$ we  have 
 $$g(a)\in a+V(\QQ_p) \Longleftrightarrow a\in \ker \left(\pi_0\circ (g-I_{\QQ_p^d})\right).$$
 So, $\ker (\pi_0\circ (g-I_{\QQ_p^d}))= W_g(\QQ_p).$  
 The linear subspace 
 $$W^S:=\bigcap_{g\in \GG(\ZZ[1/S])} W_g$$
  of $\QQ^d$  has the required property. 
 \end{proof}

\subsubsection{Orbit closures}

We now turn to the description of orbit closures of points in $X_S.$
Recall that $\vfi: \QQ_S^d\to X_S$ denotes the canonical projection.
\begin{proposition}
 \label{Prop-OrbitClosure-S-adic}
 Assume that $\GG(\QQ_S)$ is generated by unipotent one-parameter  subgroups. Let $a\in \QQ_S^d$ and $x=\vfi(a)\in X_S.$
 There exists a  $\GG(\QQ)$-invariant linear subspace $V$ of $\QQ^d$ 
 with the following properties:
 \begin{itemize}
 \item[(i)] $g(a)\in a + V(\QQ_S)$ for every  $g\in \GG(\QQ_S);$
 \item[(ii)] the closure of the $\GG(\ZZ[1/S])$-orbit of $x$   in $X_S$ coincides with $x+\vfi(V(\QQ_S)).$
 \end{itemize}
 \end{proposition}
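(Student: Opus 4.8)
The plan is to deduce this from Ratner's topological rigidity theorem in the $S$-arithmetic setting (\cite{Margulis-Tomanov}, \cite{Tomanov}), following exactly the same ``suspension'' strategy used in the proof of Proposition~\ref{Prop-InvProba-S-adic}, but now with orbit closures in place of invariant measures. First I would form the semi-direct product $\widetilde{G}:=\GG(\QQ_S)\ltimes \QQ_S^d$ and its lattice $\widetilde{\Ga}:=\GG(\ZZ[1/S])\ltimes \ZZ[1/S]^d$, embed $X_S$ into $\widetilde{G}/\widetilde{\Ga}$ as before, and observe that the $\GG(\ZZ[1/S])$-action \emph{by automorphisms} on $X_S$ becomes the $\GG(\ZZ[1/S])$-action \emph{by left translations} on $\widetilde{G}/\widetilde{\Ga}$ restricted to the image of $X_S$. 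The point $x=\vfi(a)$ corresponds to the coset $a\widetilde{\Ga}/\widetilde{\Ga}$, and the closure of its $\GG(\ZZ[1/S])$-orbit in $X_S$ is the intersection with $X_S$ of the closure of the $\GG(\ZZ[1/S])$-orbit of $a\widetilde{\Ga}$ in $\widetilde{G}/\widetilde{\Ga}$. Since $\GG(\QQ_S)$ is generated by unipotent one-parameter subgroups, $\GG(\ZZ[1/S])$ is generated (as a group) by the $\ZZ[1/S]$-points of those subgroups, each of which is a unipotent subgroup of $\widetilde{G}$; hence $\overline{\GG(\ZZ[1/S])\cdot a\widetilde{\Ga}}$ is a finite union of closures of orbits of unipotent one-parameter $\QQ_p$-subgroups, and Ratner's topological rigidity theorem in the form of \cite[Theorem 2]{Tomanov} applies.

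Next I would extract the algebraic conclusion. Ratner's theorem yields a $\QQ$-algebraic subgroup $\LL\subset\GG$, an $\LL(\QQ)$-invariant subspace $V\subset\QQ^d$, a finite-index subgroup $H$ of $\LL(\QQ_S)\ltimes V(\QQ_S)$, and an element $g\in\widetilde{G}$ such that $\overline{\GG(\ZZ[1/S])\cdot a\widetilde{\Ga}}=gH\widetilde{\Ga}/\widetilde{\Ga}$ and $\GG(\QQ_S)\subset H^g:=gHg^{-1}$; here one must also use that the orbit-closure group must be invariant under the whole $\GG(\QQ_S)$ acting on the left, since $\GG(\QQ_S)$ normalizes (indeed contains) the group generated by the relevant unipotents up to conjugation --- this is where the full strength of \cite{Tomanov} over the bare Ratner statement is used, just as in Proposition~\ref{Prop-InvProba-S-adic}. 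As there, since $\widetilde{G}=\GG(\QQ_S)\ltimes\QQ_S^d$ I can left-multiply $g$ by an element of $\GG(\QQ_S)$ to arrange $g=a\in\QQ_S^d$, so that $\GG(\QQ_S)\subset H^a$; conjugating the inclusion $\GG(\QQ_S)\subset aHa^{-1}$ gives $a^{-1}\GG(\QQ_S)a\subset H$, and projecting to the $\QQ_S^d$-coordinate shows $g(a)-a\in V(\QQ_S)$ for all $g\in\GG(\QQ_S)$ (writing $g(a)$ for $gag^{-1}$), which is (i). Finally, $H\cap\QQ_S^d$ is a finite-index subgroup of $V(\QQ_S)$, and $V(\QQ_S)$ is divisible (being a $\QQ_S$-vector space) hence has no proper finite-index subgroup, so $H\cap\QQ_S^d=V(\QQ_S)$; therefore $gH\widetilde{\Ga}/\widetilde{\Ga}$ meets $X_S=\QQ_S^d\widetilde{\Ga}/\widetilde{\Ga}$ exactly in $\vfi(a+V(\QQ_S))=x+\vfi(V(\QQ_S))$, which is (ii).

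The main obstacle I expect is purely bookkeeping: getting Ratner's topological theorem into precisely the suspended form needed. The measure-rigidity version was already cited in this form from \cite{Tomanov} and used for Proposition~\ref{Prop-InvProba-S-adic}, but the topological version requires care to check that the orbit closure is a \emph{homogeneous} subset of the form $gH\widetilde{\Ga}/\widetilde{\Ga}$ with $H$ containing all the generating unipotents, and that the $\GG(\QQ_S)$-invariance of the closure (not merely $\GG(\ZZ[1/S])$-invariance) forces $\GG(\QQ_S)\subset H^g$ --- the latter following because any element of $\GG(\QQ_S)$ normalizing the homogeneous set $gH\widetilde{\Ga}/\widetilde{\Ga}$ and fixing the point $g\widetilde{\Ga}$ up to $\widetilde{\Ga}$ must lie in $gH g^{-1}$ by the structure theory, after passing to the connected component and using that $\GG(\QQ_S)$ is generated by connected unipotent subgroups. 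Everything else --- the divisibility argument, the reduction of $g$ to $\QQ_S^d$, the identification of the trace with $X_S$ --- is routine and parallels the already-written proof of the measure statement.
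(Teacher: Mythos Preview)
Your overall suspension strategy matches the paper's, but there is a genuine gap in how you invoke Ratner/Tomanov. You propose to apply the orbit-closure theorem directly to the $\GG(\ZZ[1/S])$-orbit of $a\widetilde{\Ga}$. This is problematic: $\GG(\ZZ[1/S])$ is a \emph{discrete} subgroup of $\widetilde{G}$, and Ratner's theorem in the $S$-adic setting (as in \cite{Ratner2}, \cite{Margulis-Tomanov}, \cite{Tomanov}) requires the acting group to be generated by one-parameter unipotent $\QQ_p$-subgroups, not by discrete unipotent elements. Your sentence ``hence $\overline{\GG(\ZZ[1/S])\cdot a\widetilde{\Ga}}$ is a finite union of closures of orbits of unipotent one-parameter $\QQ_p$-subgroups'' does not follow: the orbit of a discrete group is not such a union. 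Moreover, your proposed mechanism for obtaining $\GG(\QQ_S)\subset H^g$ --- namely ``$\GG(\QQ_S)$-invariance of the closure'' --- fails, since the $\GG(\ZZ[1/S])$-orbit closure lies inside $X_S\cong\QQ_S^d\widetilde{\Ga}/\widetilde{\Ga}$, and $X_S$ is \emph{not} invariant under left translation by $\GG(\QQ_S)$ (only under $\GG(\ZZ[1/S])$).

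The paper circumvents both issues by applying \cite[Theorem~1]{Tomanov} to the \emph{$\GG(\QQ_S)$-orbit} of $x=a\widetilde{\Ga}$ in $\widetilde{G}/\widetilde{\Ga}$; since $\GG(\QQ_S)$ is genuinely generated by unipotent one-parameter subgroups, the theorem applies directly and yields $\overline{\GG(\QQ_S)x}=aH\widetilde{\Ga}/\widetilde{\Ga}$ with $\GG(\QQ_S)\subset H^a$ for free. The missing step --- and it is not mere bookkeeping --- is then to prove the identity
\[
\overline{\GG(\ZZ[1/S])x}\;=\;\overline{\GG(\QQ_S)x}\cap X_S,
\]
which the paper establishes by an explicit fundamental-domain argument: choosing a fundamental domain $\Omega\subset\GG(\QQ_S)$ for $\GG(\QQ_S)/\GG(\ZZ[1/S])$ containing a neighbourhood of $e$, and a compact fundamental domain $K\subset\QQ_S^d$ for $\QQ_S^d/\ZZ[1/S]^d$, one writes $g_n=\omega_n\gamma_n$ and shows that if $g_n x\to y\in X_S$ then necessarily $\omega_n\to e$, whence $\gamma_n x\to y$. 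After that, the intersection $(aH\widetilde{\Ga}/\widetilde{\Ga})\cap X_S=\vfi(a+H\cap V(\QQ_S))=\vfi(a+V(\QQ_S))$ follows exactly as you describe, via divisibility of $V(\QQ_S)$.
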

 \begin{proof}
 As in the proof of  Proposition~\ref{Prop-InvProba-S-adic}, we  consider the semi-direct product  $\widetilde{G}=\GG(\QQ_S)\ltimes \QQ_S^d$ 
 and embed $X_S$ as closed subset of    $\widetilde{G}/\widetilde{\Ga}$, where
$\widetilde{\Ga}=\GG(\ZZ[1/S])\ltimes \ZZ[1/S]^d.$

 By the refinement \cite[Theorem 1]{Tomanov} of Ratner's theorem about orbit closures,
there exists   a $\QQ$-algebraic subgroup $L$ of $G$, an $L(\QQ)$-invariant
vector subspace $V$ of $\QQ^d$ and  a finite index subgroup $H$ of 
$L(\QQ_S) \ltimes  V(\QQ_S)$ with the following properties:
\begin{itemize}
\item $\GG(\QQ_S) \subset H^a:=a H a^{-1};$
\item $H\cap\widetilde{\Ga}$ is a lattice in $H;$ 
\item the closure $\overline{\GG(\QQ_S)x}$ of  the $\GG(\QQ_S)$-orbit of $x$ is $H^ax$, that is, 
$aH\widetilde{\Ga}/\widetilde{\Ga}.$
\end{itemize}

We claim that 
$$
\overline{\GG(\ZZ[1/S])x}=\overline{\GG(\QQ_S)x}\cap X_S.
$$
We only have to show that $\overline{\GG(\QQ_S)x}\cap X_S$
is contained in  $\overline{\GG(\ZZ[1/S])x},$ the reverse inclusion being obvious.

Set $\Gamma=\GG(\ZZ[1/S])$ and $\Lambda=\ZZ[1/S]^d.$
Choose a fundamental domain $\Omega\subset \GG(\QQ_S)$ for $\GG(\QQ_S)/\Gamma$
which is a neighbouhood of $e$ and  a compact fundamental domain  $K\subset \QQ_S^d$  for $\QQ_S^d/\La$

Let $y \in \overline{\GG(\QQ_S)x}\cap X_S.$
Then there exists a sequence $g_n \in \GG(\QQ_S)$ such that $\lim_n (g_n,e) x=y$. Write $g_n= \omega_n \ga_n$ for
$\omega_n\in \Omega$ and $\ga_n\in \Ga$ and $\ga_n(a)= k_n +\la_n$ for 
$k_n\in K$ and $\la_n\in \La.$
Then 
$$
\begin{aligned}
y&=\lim_n (g_n,e)x=  \lim_n(\omega_n,e) (\ga_n, e) (e,a)\widetilde{\Ga}=\lim_n(\omega_n,e) (\ga_n, \ga_n(a))\widetilde{\Ga}\\
&= \lim_n(\omega_n,e) (e, k_n) \widetilde{\Ga}= \lim_n(\omega_n, \omega_n(k_n))\widetilde{\Ga}.\\
\end{aligned}
$$
On the one hand, it follows that $\lim_n\omega_n \delta_n= e$  for some $\delta_n\in \Ga.$
So, for large $n,$ we have $\omega_n \delta_n\in \Omega$ and, since $\omega_n\in\Omega$,
we have $\delta_n=e,$ that is, $\lim_n \omega_n=e.$
On the other hand, as $K$ is compact, we can assume that $\lim_nk_n=k\in K$
exists. Therefore, we have $\lim_n (\omega_n, \omega_n(k_n)) = (e,k)$ and so 
$y= k+\La$  and 
$$
y= \lim_n (k_n+ \La)= \lim_n (\ga_n(a)+\La),
 $$
that is, $y\in \overline{\GG(\ZZ[1/S])x}.$ So, the claim is proved.

We have 
$$\left(aH\widetilde{\Ga}/\widetilde{\Ga}\right) \cap X_S= \vfi(a+H\cap  V(\QQ_S))$$
and, since (as in the proof of  Proposition~\ref{Prop-InvProba-S-adic}) 
$H\cap  V(\QQ_S)= V(\QQ_S)$, this finishes the proof.

 \end{proof}

\subsection{Invariant probability measures and orbit closures  on  adelic  solenoids}
 \label{SS:InvariantProba-S-Adelic}
Let $\GG$ be an algebraic subgroup of $GL_d$  defined over $\QQ.$
 We are now ready to deal with the description 
 of the  $\GG(\QQ)$-invariant probability measures and the orbit closures for
the adelic solenoid $$X:=\AA^d/\QQ^d.$$

Denote by $\mathcal{S}$ the set of finite subsets $S$ 
of $\P\cup\{\infty\}$ with $\infty \in S.$

Let $S\in  \mathcal{S}$. 
It is well-known (see \cite{Weil}) that
$$\AA^d= \left(\QQ_S^d \times \prod_{p\notin \P} \ZZ_p^d\right) +\QQ^d$$
and that 
$$\left(\QQ_S^d \times \prod_{p\notin \P} \ZZ_p^d\right)\cap \QQ^d= \ZZ[1/S]^d.$$ 
 This gives rise to a well defined projection  
$$\pi_S: X\to X_S=\QQ_S^d/\ZZ[1/S]^d$$ given by 
$$\pi_S\left((a_S, (a_p)_{p\notin S}) +\QQ^d\right)= a_S+\ZZ[1/S]^d \tout a_S\in \QQ_S^d, (a_p)_{p\notin S}\in \prod_{p\notin \P} \ZZ_p^d.$$
So, the fiber of $\pi_S$ over  a point $a_S+\ZZ[1/S]^d\in X_S$ is
 $$\pi_S^{-1}(a_S+\ZZ[1/S]^d)= \{(a_S, (a_p)_{p\notin S}) +\QQ^d\mid a_p\in \ZZ_p^d \text{ for all } p\notin S\}.$$
Observe that $\pi_S$ is $GL_d(\ZZ[1/S])$-equivariant.

Let $S'\in  \mathcal{S}$ with $S\subset S'.$ 
Then
$$\AA_{S'}^d= \left(\QQ_S^d \times \prod_{p\in S'\setminus S} \ZZ_p^d\right) +\ZZ[1/S']^d,$$
$$\left(\QQ_S^d \times \prod_{p\in S'\setminus S} \ZZ_p^d\right)\cap \ZZ[1/S']^d= \ZZ[1/S]^d,$$ 
and we have a similarly defined $GL_d(\ZZ[1/S])$-equivariant projection 
$\pi_{S', S}: X_{S'}\to X_S.$ Observe that $\pi_S= \pi_{S', S}\circ \pi_{S'}.$

Let $V$ be a linear subspace of $\QQ^d$. For $p\in \P$,
we write  $V(\ZZ_p)$ for the $\ZZ_p$-span of 
$V(\ZZ)$ in $V(\QQ_p)$; the ad\`ele space corresponding to $V$ is
$$
V(\AA)=\bigcup_{S\in  \mathcal{S}} \left(V(\QQ_S) \times  \prod_{p\notin S} V(\ZZ_p)\right).
$$
We denote by  $\vfi$ the canonical projection $\AA^d\to X.$
The image of $\vfi(V(\AA))$ in $X$ can be written as
$$\vfi(V(\AA))= \vfi\left(V(\RRR) \times  \prod_{p\in \P} V(\ZZ_p)\right)$$
and is a closed and connected subgroup of $X$.
Conversely, every closed and connected subgroup of $X$ is of the 
form $\vfi(V(\AA))$ for a unique linear subspace $V$ of $\QQ^d$ (see Lemma~\ref{Lem-Subsolenoids} below).

The following simple fact will be useful.

\begin{lemma}
\label{lem-Intersection}
Let $V$ be a linear subspace of $\QQ^d$.
Set 
$$\Omega:=\bigcap_{S}\vfi\left(V(\RRR) \times  \prod_{p\in S} V(\ZZ_p) \times \prod_{p\notin S} \ZZ_p^d\right) \subset X.
$$
where $S$ runs over the finite subsets of $\P.$
Then 
$$\Omega=\vfi(V(\AA)).$$
\end{lemma}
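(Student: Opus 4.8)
The plan is to prove the two inclusions separately, the inclusion $\vfi(V(\AA)) \subseteq \Omega$ being essentially immediate and the reverse inclusion being the point that requires an argument. First I would recall that, by definition of the adelic points of $V$,
$$
V(\AA)= \bigcup_{S} \left(V(\QQ_S) \times \prod_{p\notin S} V(\ZZ_p)\right)
\subseteq V(\RRR) \times \prod_{p\in S} V(\ZZ_p) \times \prod_{p\notin S} \ZZ_p^d
$$
for every finite $S\subseteq \P$ (enlarging $S$ only shrinks the set on the right, and $V(\ZZ_p)\subseteq \ZZ_p^d$); applying $\vfi$ gives $\vfi(V(\AA))\subseteq \Omega$ at once. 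For the reverse inclusion, take $x\in \Omega$ and lift it to $a=(a_\infty,(a_p)_{p\in\P})\in \AA^d$. For each finite $S\subseteq \P$ the membership $x\in \vfi\bigl(V(\RRR)\times\prod_{p\in S}V(\ZZ_p)\times\prod_{p\notin S}\ZZ_p^d\bigr)$ means there is $q_S\in \QQ^d$ with $a-q_S \in V(\RRR)\times\prod_{p\in S}V(\ZZ_p)\times\prod_{p\notin S}\ZZ_p^d$; in particular $a_\infty - q_S\in V(\RRR)$ and $a_p-q_S\in V(\ZZ_p)$ for $p\in S$, while $a_p-q_S\in \ZZ_p^d$ for $p\notin S$.

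The key step is to produce a \emph{single} rational vector $q\in \QQ^d$ that works for all $S$ simultaneously, i.e. with $a-q\in V(\AA)$. I would argue as follows. Since $a_\infty-q_S\in V(\RRR)$ for every $S$, the vectors $q_S$ all lie in the same coset $a_\infty + V(\RRR)$ of $V(\RRR)$ in $\QQ^d+V(\RRR)$; because $V$ is defined over $\QQ$, $V(\RRR)\cap\QQ^d = V(\QQ)$, so $q_S - q_{S'}\in V(\QQ)$ for all $S,S'$. Fix $q:=q_{S_0}$ for some $S_0$. Now compare: for a finite $S\supseteq S_0$ we have $q-q_S\in V(\QQ)$ and $a_p-q_S\in V(\ZZ_p)$ for $p\in S$, hence $a_p-q = (a_p-q_S)+(q_S-q)\in V(\QQ_p)$ for $p\in S$; and $a_p-q\in \QQ_p^d$ always. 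Thus $a-q$ has all finite components in $V(\QQ_p)$ and almost all (namely, for $p\notin$ any given large $S$, using that $a_p\in\ZZ_p^d$ for almost all $p$ and $q$ is a fixed rational, so $a_p-q\in\ZZ_p^d$ for almost all $p$, and combined with $a_p-q\in V(\QQ_p)$ this forces $a_p-q\in V(\QQ_p)\cap\ZZ_p^d = V(\ZZ_p)$) in $V(\ZZ_p)$; together with $a_\infty-q\in V(\RRR)$ this says exactly $a-q\in V(\AA)$, so $x=\vfi(a)=\vfi(a-q)\in\vfi(V(\AA))$.

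The main obstacle, and the only place where anything nontrivial happens, is the coherence argument just sketched: passing from the family $(q_S)_S$ of ``local'' rational corrections to one global $q$, and checking that the resulting $a-q$ genuinely lands in the \emph{restricted} product defining $V(\AA)$ rather than merely in $\prod_p V(\QQ_p)$. The two facts that make this go through are $V(\QQ_p)\cap \ZZ_p^d = V(\ZZ_p)$ (true because $V$ has a $\QQ$-basis, hence a $\ZZ[1/m]$-basis for suitable $m$, which is a $\ZZ_p$-basis for all $p\nmid m$, and for the finitely many remaining $p$ one adjusts $S_0$) and $V(\RRR)\cap\QQ^d=V(\QQ)$. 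I would isolate the first of these as a one-line sub-claim before the main argument; everything else is bookkeeping with cosets and finite sets of primes.
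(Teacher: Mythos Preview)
Your argument is correct, though there is a small slip in the easy direction: the displayed inclusion $V(\AA)\subseteq V(\RRR)\times\prod_{p\in S}V(\ZZ_p)\times\prod_{p\notin S}\ZZ_p^d$ is literally false before passing to $\vfi$ (an adele of $V$ can have $v_p\notin V(\ZZ_p)$ at a prime $p\in S$); what is true, and what you need, is that $\vfi(V(\AA))=\vfi\bigl(V(\RRR)\times\prod_{p\in\P}V(\ZZ_p)\bigr)$, and the latter set sits inside each term of the intersection. Your sub-claim $V(\QQ_p)\cap\ZZ_p^d=V(\ZZ_p)$ in fact holds for \emph{every} $p$ (because $V(\ZZ)=V\cap\ZZ^d$ is a direct summand of $\ZZ^d$, so tensoring with $\ZZ_p$ preserves the splitting), which means no ``adjusting $S_0$'' is needed.

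The paper's proof takes a slightly different and shorter route. Rather than picking an arbitrary lift and then searching for a single global rational correction, it first \emph{normalizes} the lift by using $S=\emptyset$: this produces $a$ with $a_\infty\in V(\RRR)$ and $a_p\in\ZZ_p^d$ for all $p$. Then, for each prime $p_0$, it compares with the lift coming from $S=\{p_0\}$; the difference $q$ is rational, and because \emph{both} lifts have all finite components in $\ZZ_p^d$, one gets $q\in\ZZ^d$, hence $q\in V(\RRR)\cap\ZZ^d=V(\ZZ)$. This immediately yields $a_{p_0}\in V(\ZZ_{p_0})$, so the original normalized lift already lies in $V(\AA)$. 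The gain over your approach is that the integrality of $q$ comes for free from the normalization, so the sub-claim $V(\QQ_p)\cap\ZZ_p^d=V(\ZZ_p)$ is never invoked. Your coherence argument via the coset $a_\infty+V(\RRR)$ is perfectly valid but does a little more work to reach the same conclusion.
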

\begin{proof}
 It is clear that $\vfi(V(\AA))$ is contained in  $\Omega.$
 Conversely, let $x\in \Omega.$ Then there exists $a=(a_p)_{p\in \P\cup\{\infty\}}\in \AA^d$
 with  $\vfi(a)=x$ such that $a_{\infty}\in V(\RRR)$ and $a_p\in \ZZ_p^d$ for all $p\in \P$.
 We claim that $a_p\in V(\ZZ_p)$ for all $p\in \P$. 
 
 Indeed, let $p_0\in \P$. There exists $q\in \QQ^d$ such that 
 $(a_p+q)_{p\in \P\cup\{\infty\}} \in \RRR^d\times   \prod_{p\in \P} \ZZ_p^d$ with 
 $a_{\infty}+q\in V(\RRR)$ and  $a_{p_0}+q\in V(\ZZ_{p_0})$.
 For every $p\in \P,$ we have 
 $q=(a_{p}+q)-a_{p} \in \ZZ_p^d$ and hence $q\in \ZZ^d.$
 Since $a_{\infty}\in V(\RRR),$ we also have 
 $q=(a_{\infty}+q)-a_{\infty}\in V(\RRR)$. Hence, $q\in V(\ZZ)\subset V(\ZZ_{p_0})$ and therefore
 $a_{p_0}= (a_{p_0}+q)-q\in V(\ZZ_{p_0}).$

\end{proof}
\subsubsection{Invariant probability measures}

We will denote by  $\vfi$ the canonical projection $\AA^d\to X$ 
and by $\vfi_S$  the projection  $\QQ_S^d\to X_S$ for a set  $S\in  \mathcal{S}.$

\begin{theorem}
\label{Theo-ErgodicityAdelic}
Let $\GG$ be a connected algebraic subgroup of $GL_d$  defined over $\QQ.$
Assume that $\GG(\QQ)$ is generated by unipotent  one-parameter 
 subgroups. Let $\mu$ be an ergodic $\GG(\QQ)$-invariant probability measure
 on the Borel subsets of $X=\AA^d/\QQ^d.$ There exists a pair $(a,V_0)$  consisting of a point $a\in \AA^d$ and a  $\GG(\QQ)$-invariant linear subspace $V_0$ of $\QQ^d$ 
 such that 
 $$\mu= \mu_{x+Y},$$
 for $x=\vfi(a)$ and $Y=\vfi(V_0(\QQ))$.
 Moreover, $a$ can be chosen so that the set $\{g(a)-a\mid g\in \GG(\QQ)\}$ is dense in $V_0(\AA).$
 \end{theorem}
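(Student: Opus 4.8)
The plan is to deduce Theorem~\ref{Theo-ErgodicityAdelic} from the $S$-adic case (Proposition~\ref{Prop-InvProba-S-adic}) by an exhaustion/compatibility argument across the projections $\pi_S: X\to X_S$. First I would push $\mu$ forward to each $X_S$: since $\pi_S$ is $\GG(\ZZ[1/S])$-equivariant and $\GG(\ZZ[1/S])\subset \GG(\QQ)$, the measure $\mu_S:=(\pi_S)_*\mu$ is a $\GG(\ZZ[1/S])$-invariant probability measure on $X_S$; and because $\GG(\QQ)$ is generated by unipotent one-parameter subgroups, so is $\GG(\QQ_S)$ for each $S\in\mathcal{S}$ (the one-parameter subgroups are defined over $\QQ$ and embed diagonally), so Proposition~\ref{Prop-InvProba-S-adic-bis} applies. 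The key point to verify is that $\mu$ being $\GG(\QQ)$-ergodic forces $\mu_S$ to be $\GG(\ZZ[1/S])$-ergodic for a cofinal family of $S$; this is not automatic, but one shows that if $\mu$ is ergodic under the larger group $\GG(\QQ)$, then its ergodic decomposition over $X_S$ with respect to the subgroup $\GG(\ZZ[1/S])$ cannot be genuinely nontrivial once $S$ is large enough, because the distinct ergodic pieces $\mu_{x_\omega+Y_\omega}$ correspond (via Lemma~\ref{Lemma-GenericSingular}) to distinct $\GG(\QQ)$-invariant subspaces $V_\omega\in\mathcal{H}$, and $\mathcal{H}$ is a finite set of subspaces of $\QQ^d$. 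So there is a single $V_\omega=V_0$ that carries all the mass, giving $\mu_S=\mu_{x_S+Y_S}$ with $Y_S=\vfi_S(V_0(\QQ_S))$ and $x_S=\vfi_S(a_S)$ for some $a_S\in\mathcal{N}(V_0,S)$; crucially $V_0$ does not depend on $S$.

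Next I would assemble the local data $(a_S, V_0)$ into a single adelic point. The projections $\pi_{S',S}: X_{S'}\to X_S$ for $S\subset S'$ are $\GG(\ZZ[1/S])$-equivariant and satisfy $\pi_S=\pi_{S',S}\circ\pi_{S'}$, so the measures $\mu_S$ are compatible: $(\pi_{S',S})_*\mu_{S'}=\mu_S$. Since each $\mu_S$ is the translate of the Haar measure of the subsolenoid $\vfi_S(V_0(\QQ_S))$ by a class $x_S$, compatibility forces the classes $x_S$ to be compatible under the $\pi_{S',S}$, hence to come from a single class $x=\vfi(a)\in X$ with $a\in\AA^d$ and $\pi_S(x)=x_S$ for all $S$. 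I would then check that $\mu$ and $\mu_{x+Y}$, with $Y=\vfi(V_0(\AA))=\vfi(V_0(\RRR)\times\prod_p V_0(\ZZ_p))$, have the same pushforward under every $\pi_S$; since the $\sigma$-algebra on $X$ is generated by the $\pi_S$-preimages (the projective limit structure, cf.\ Lemma~\ref{lem-Intersection}), this yields $\mu=\mu_{x+Y}$. Here one must note that $V_0(\QQ)$ is dense in $V_0(\AA)$, so $Y$ as defined in the statement via $\vfi(V_0(\QQ))$ has closure equal to $\vfi(V_0(\AA))$ and $\mu_{x+Y}$ is unambiguous.

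For the last assertion, that $a$ may be chosen with $\{g(a)-a\mid g\in\GG(\QQ)\}$ dense in $V_0(\AA)$, I would argue as follows. The set $D:=\{g(a)-a\mid g\in\GG(\QQ)\}$ lies in $V_0(\AA)$ because $V_0$ is $\GG(\QQ)$-invariant and, by Lemma~\ref{Lemma-NonErgInvMeasure} applied at each $S$ together with the ergodicity established above, $a$ already satisfies $g(a)-a\in V_0(\QQ_S)$-density at every finite level; concretely, $\vfi_S(D)$ is dense in $\vfi_S(V_0(\QQ_S))$ for each $S$ because $a_S\in\mathcal{N}(V_0,S)$. If $\vfi(D)$ failed to be dense in $Y=\vfi(V_0(\AA))$, its closure would be a proper closed $\GG(\QQ)$-invariant subgroup, hence (by Proposition~\ref{Prop-ErgodicityAffineSubspace} applied with $X=Y$ and $x_0=x$, or directly) $\mu_{x+Y}$ would not be ergodic, contradicting ergodicity of $\mu$; alternatively, projecting this closure to each $X_S$ and using density there forces it to be all of $Y$ by Lemma~\ref{lem-Intersection}. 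If the original $a$ does not work one translates it within the coset $x+Y$ by a $\mu$-generic element, which by ergodicity has dense orbit.

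The main obstacle I anticipate is the first step: upgrading $\GG(\QQ)$-ergodicity of $\mu$ to ergodicity (or at least concentration on a single $V_0$) of the pushforwards $\mu_S$ under the much smaller group $\GG(\ZZ[1/S])$. The ergodic components of $\mu_S$ over $X_S$ need not individually lift to $\GG(\QQ)$-invariant measures on $X$, so one cannot simply invoke uniqueness of the ergodic decomposition; instead one must exploit that the decomposition is indexed by the \emph{finite} set $\mathcal{H}$ of $\GG(\QQ)$-invariant subspaces (Lemma~\ref{Lemma-GenericSingular}), so that the $\GG(\QQ)$-action permutes the pieces through a finite quotient and ergodicity of $\mu$ collapses them to one. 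Making this permutation argument precise, and checking that the resulting subspace $V_0$ is genuinely independent of $S$ so the pieces glue, is the technical heart of the proof.
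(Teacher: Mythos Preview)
Your overall strategy---push $\mu$ forward to $X_S$, invoke the $S$-adic classification, then reassemble---is the same as the paper's, but the first step as you outline it does not go through, and the paper handles it quite differently.

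Two concrete problems with your first step:
\begin{itemize}
\item The set $\mathcal{H}$ of $\GG(\QQ)$-invariant subspaces of $\QQ^d$ is \emph{not} finite in general. For instance, take $\GG=\mathbf{SL}_2$ acting diagonally on $\QQ^2\oplus\QQ^2$; then every graph $\{(v,\lambda v):v\in\QQ^2\}$, $\lambda\in\QQ$, is invariant. So the ``finite permutation'' argument cannot work as stated.
\item More seriously, Lemma~\ref{Lemma-GenericSingular} does \emph{not} say that distinct ergodic components correspond to distinct $V\in\mathcal{H}$; it only says the sets $\vfi_S(\mathcal{N}(V,S))$ are disjoint for distinct $V$. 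For a fixed $V_0$ there is a whole family of distinct ergodic measures $\mu_{x_\omega+Y_S}$ on $X_S$ (one for each coset of $Y_S=\vfi_S(V_0(\QQ_S))$), so even after you know all the mass sits on a single $V_0$ you still cannot conclude $\mu_S=\mu_{x_S+Y_S}$ for one class $x_S$. In particular, $\mu_S$ need not be $\GG(\ZZ[1/S])$-ergodic for any $S$, and the coset-assembly step you describe afterwards has no input to work with.
\end{itemize}

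The paper avoids this entirely. It never claims $\mu_S$ is ergodic. Instead it applies the \emph{non-ergodic} decomposition (Proposition~\ref{Prop-InvProba-S-adic-bis}) to each $\mu_S$, then picks $V_0$ of \emph{maximal dimension} among all $V$ with $\mu_{S,V}\neq 0$ for some $S$. The maximality forces the mass on $\vfi_{S_n}(\mathcal{N}(V_0,S_n))$ to stay bounded below by a fixed $c>0$ as $S_n$ increases. Passing through the auxiliary subspace $W\supset V_0$ of Lemma~\ref{Lemma-NonErgInvMeasure} and using Lemma~\ref{lem-Intersection}, this yields $\mu(\vfi(W(\AA)))\geq c$; only now does the paper invoke $\GG(\QQ)$-ergodicity of $\mu$ on $X$ (not of any $\mu_S$) to get full measure, find a generic point $a$ with $g(a)-a\in V_0(\AA)$, and identify $\mu$ with the Haar measure on the coset by showing translation-invariance under $Y$ level by level.

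So the missing idea in your sketch is precisely this maximal-dimension/persistence argument, which replaces the (false) ergodicity of $\mu_S$; once you see it, the rest of your assembly argument is essentially what the paper does in its third and fourth steps.
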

\begin{proof}
For $S\in \mathcal{S},$
let $\mu_S$ be the image of $\mu$ under the projection 
$$\pi_S:X\to X_S.$$ 
Then $\mu_S$ is  a $\GG(\ZZ[1/S]$-invariant probability measure on $X_S$ and,
by Proposition~\ref{Prop-InvProba-S-adic-bis},  we have a  decomposition 
 $$
 \mu_S= \bigoplus_{V\in \mathcal{H}} \mu_{S,V}
 $$
 with mutually singular   measures $\mu_{S,V}$ on $X_S$ such that
 $$\mu_{S,V}(X_S \setminus \vfi_S(\mathcal{N}(V,S))=0.$$
  
  Fix $S_0\in \mathcal{S}$  and $V_0\in \mathcal{H}$ with 
$\mu_{S_0,V_0}\neq 0$ and such   that
 $$
 \dim V_0=\max \left\{\dim V\mid  \mu_{S, V}\neq 0 \text{ for some } S\in \mathcal{S}\right\}.
 $$
 
  Write 
  $$\P\cup\{\infty\}=\cup_{n\geq 0} S_n$$
    for an increasing  sequence of subsets $S_n \in \mathcal{S}$.
Denote by $\mu_n$ instead of $\mu_{S_n}$
 the image of $\mu$ under the projection  $\pi_{S_n}:X\to X_{S_n}$.
 Set 
 $$c:= \mu_{0}(\vfi_{S_0} (\mathcal{N}(V_0,S_0))>0.$$

\vskip.2cm
$\bullet$ {\it First step.}  We claim that 
$$\mu_n (\vfi_{S_n}(\mathcal{N}(V_0,S_n))\geq c \tout n\geq 1.$$
Indeed,  let  $V\in \H$ be such that  $\mu_{S_n,V}\neq 0.$
Recall that 
$$\pi_{S_n, S_0}: X_{S_n}\to X_{S_0}$$
is the natural $\GG(\ZZ[1/S_0])$-equivariant projection.
Let 
$$x\in  \pi_{S_n,S_0}^{-1} (\vfi_{S_0}(\mathcal{N}(V_0, S_0)))\cap \vfi_{S_n}(\mathcal{N}(V, S_n)).$$
Then, on the one hand, $x_0:= \pi_{S_n, S_0}(x)\in \vfi_{S_0}(\mathcal{N}(V_0, S_0))$
and hence the set $\{g(x_0)-x_0\mid g\in \GG(\ZZ[1/S_0])\}$ is dense in 
$\vfi_{S_0}(V_0(\QQ_{S_0}))$.  On the other hand,
since $x\in \vfi_{S_n}(\mathcal{N}(V, S_n))$ and since $\GG(\ZZ[1/S_0])$
is contained in $\GG(\ZZ[1/S_n]),$ the set  $\{g(x)-x\mid g\in \GG(\ZZ[1/S_0])\}$ is contained in  $\vfi_{S_n}(V(\QQ_{S_n}))$.
As 
$$\pi_{S_n, S_0}(\vfi_{S_n}(V(\QQ_{S_n}))= \vfi_{S_0}(V(\QQ_{S_0}))$$
and as $\pi_{S_n, S_0}$ is $\GG(\ZZ[1/S_0])$-equivariant and continuous,
it follows that 
$$\vfi_{S_0}(V_0(\QQ_{S_0})) \subset  \vfi_{S_0}(V(\QQ_{S_0})).$$
This implies that  $V_0\subset V$ (see  the proof of Lemma~\ref{Lemma-GenericSingular}).
It follows that $V=V_0,$ by maximality of the dimension of $V_0.$
 This shows that 
 $$
\mu_n\left( \pi_{S_n,S_0}^{-1} (\vfi_{S_0}(\mathcal{N}(V_0,S_0))) \cap\vfi_{S_n}(\mathcal{N}(V,S_n)) \right)=0
\quad \text{for every} \quad V\neq V_0$$
and hence that
$$
\mu_n \left (\pi_{S_n,S_0}^{-1} (\vfi_{S_0}(\mathcal{N}(V_0,S_0)))\right)\leq  \mu_n(\vfi_{S_n}(\mathcal{N}(V_0,S_n))).
$$
Since $\mu_0=\mu_{S_0}$ is the image of 
$\mu_n$ under  $\pi_{S_n,S_{0}}$, we have 
 $$\mu_n \left (\pi_{S_n,S_0}^{-1} (\vfi_{S_0}(\mathcal{N}(V_0,S_0)))\right)=
 \mu_{0}(\vfi_{S_0} (\mathcal{N}(V_0,S_0)),$$
 and the  claim is proved.
\vskip.2cm

For every $n\geq 0,$ let $W^n= W^{S_n}$ be the  linear subspace of $\QQ^d$  
defined by $V_0$ as in
Lemma~\ref{Lemma-NonErgInvMeasure}.
It is clear that  the family  $(W^n)_{n\geq 0}$ 
of finite dimensional linear subspaces is  decreasing. So, there exists
$N\geq 0$ such that $W^{n}= W^{N}$ for all $n\geq N.$
Set $W:= W^{N}$. Recall 
that $V_0\subset W^n$ for every $n\geq 0$ and hence 
$V_0\subset W.$

\vskip.2cm
$\bullet$ {\it Second step.} We claim that  $\mu(\vfi(W(\AA))\geq c.$

Indeed, since $\mathcal{N}(V_0,S_n)\subset W^n(\QQ_{S_n}),$ it follows from the first step
that
$$\mu_n(\vfi_{S_n}(W(\QQ_{S_n})))=\mu_n(\vfi_{S_n}(W^n(\QQ_{S_n}))) \geq c$$
for every $n\geq N.$
Setting 
$$\Omega_n:=\vfi\left(W(\QQ_\infty) \times  \prod_{p\in S_n, p\neq \infty} 
W(\ZZ_p) \times \prod_{p\notin S_n} \ZZ_p^d\right),
$$
this means that 
$$\mu(\Omega_n)\geq c \tout n\geq N,$$
 since  $\mu_n$ is the image of $\mu$ under $\pi_{S_n}.$

As  $(\Omega_n)_{n\geq N} $ is a decreasing sequence, it follows that 
$$
\mu\left(\bigcap_{n\geq N} \Omega_{n}\right)\geq c.
$$
On the other hand, we have (see Lemma~\ref{lem-Intersection})
 $$\bigcap_{n\geq N} \Omega_{n}=\vfi(W(\AA))$$
 and the claim is proved.
 \vskip.2cm
 
Set $Y:=\vfi(V_0(\AA)).$ 
\vskip.2cm
$\bullet$ {\it Third step.} We claim that there exists 
$x\in \vfi(W(\AA))$ such that $\mu( \vfi(x+Y))=1.$

Indeed, $\vfi(W(\AA))$ is $\GG(\QQ)$-invariant, since $W$ is $\GG(\QQ)$-invariant.
By the ergodicity of $\mu,$  it follows that $\mu(\vfi(W(\AA))=1$; so, we
may view $\mu$ as probability measure on $\vfi(W(\AA)).$ 

Let $Z\subset \vfi(W(\AA))$ be the support of $\mu.$ Again by ergodicity of $\mu,$
there exists a point $a\in W(\AA)$ such that $x=\vfi(a)\in Z$ 
and such that the  $\GG(\QQ)$-orbit of $x$ is dense in $Z.$ Since $g(a)\in a+V_0(\AA)$ for 
all $g\in \GG(\QQ),$ this means that $x+Y=Z$ and so $\mu(x+Y)=1.$

\vskip.2cm
$\bullet$ {\it Fourth step.} We claim that $\mu$ is invariant under  translations by elements from $Y.$ Once proved, it will follow that $\mu=\mu_{x+Y},$
by the uniqueness of the Haar measure on the closed subgroup $Y$ of $X.$

Indeed, the topological space $\vfi(W(\AA))\subset X$ is the projective limit of the sequence $(\vfi(W(\QQ_{S_n})))_{n\geq 0}$ of the topological spaces $\vfi(W(\QQ_{S_n}))\subset X_{S_n},$ with respect to the canonical 
maps $\vfi(W(\QQ_{S_n})) \to \vfi(W(\QQ_{S_m}))$
for $n\geq m.$ So,  the sets of the form $\vfi (B\times \prod_{p\notin S_n} W(\ZZ_{p}))$, where $B$ runs over the Borel subsets of $W(\QQ_{S_n})$, generate the Borel structure of $\vfi(W(\AA)).$

For every $n\geq 0$, the image $\mu_n$ of $\mu$ in $X_{S_n}$ is the 
Haar measure on the coset $\pi_{S_n}(x+Y)$ of the subgroup $\vfi_{S_n}(V(\QQ_{S_n})).$
So, 
$\mu_n$ is invariant under  translations by elements from $\vfi_{S_n}(V(\QQ_{S_n})).$
This means that, for every Borel subset $B$ of $W(\QQ_{S_n})$, we have
$$
\mu\left(z+\vfi \left(B\times \prod_{p\notin S_n} W(\ZZ_{p})\right)\right)=
\mu\left(\vfi \left(B\times \prod_{p\notin S_n} W(\ZZ_{p})\right)\right),
$$
for every $z\in Y.$ This proves the claim.
\end{proof}

\subsubsection{Orbit closures}

We now deduce  the description of orbit closures of points in $X$
from the corresponding description in the $S$-adic case.

Recall that $X_S= \QQ_S^d/\ZZ[1/S]^d$ for $S\in  \mathcal{S}$ and that 
$\vfi: \AA^d\to X$, $\vfi_S:\QQ_S^d\to X_S$ , and
$\pi_S: X\to X_S$ denote the canonical projections.
\begin{theorem}
 \label{Theo-OrbitClosure-Adelic}
 Let $\GG$ be a connected algebraic subgroup of $GL_d$  defined over $\QQ.$ Assume that $\GG(\QQ)$ is generated by unipotent  one-parameter  subgroups. Let $a\in \AA^d$ and $x=\vfi(a)\in X.$
 There exists a  $\GG(\QQ)$-invariant linear subspace $V_0$ of $\QQ^d$ 
 such that the closure of the $\GG(\QQ)$-orbit of $x$   in $X$ coincides with $x+\vfi(V_0(\AA)).$
 \end{theorem}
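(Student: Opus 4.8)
The plan is to deduce Theorem~\ref{Theo-OrbitClosure-Adelic} from the $S$-adic version, Proposition~\ref{Prop-OrbitClosure-S-adic}, by an exhaustion-and-limit argument entirely parallel to the one carried out for invariant measures in Theorem~\ref{Theo-ErgodicityAdelic}. First I would fix the point $a\in\AA^d$ and $x=\vfi(a)$, write $\P\cup\{\infty\}=\bigcup_{n\geq 0}S_n$ as an increasing union of sets $S_n\in\mathcal{S}$, and for each $n$ apply Proposition~\ref{Prop-OrbitClosure-S-adic} to the image $a_n$ of $a$ in $\QQ_{S_n}^d$ and $x_n=\pi_{S_n}(x)\in X_{S_n}$. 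This yields, for every $n$, a $\GG(\QQ)$-invariant subspace $V_n\subseteq\QQ^d$ with $g(a_n)\in a_n+V_n(\QQ_{S_n})$ for all $g\in\GG(\QQ_{S_n})$ and with $\overline{\GG(\ZZ[1/S_n])x_n}=x_n+\vfi_{S_n}(V_n(\QQ_{S_n}))$ in $X_{S_n}$.

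The key point is then to identify a single subspace $V_0$ that works adelically. The natural candidate is $V_0:=\bigcap_{g\in\GG(\QQ)}W_g$, where $W_g=\ker\bigl(\pi_0\circ(g-I)\bigr)$ for a fixed complement-projection $\pi_0$ as in Lemma~\ref{Lemma-NonErgInvMeasure}; equivalently $V_0$ is the largest $\GG(\QQ)$-invariant subspace with $g(a)\in a+V_0(\AA)$ for all $g\in\GG(\QQ)$. Since $\GG(\QQ)$ is countable and the $W_g$ are finite-dimensional, this intersection stabilizes: there is an $N$ such that $\bigcap_{g\in\GG(\ZZ[1/S_n])}W_g=V_0$ for all $n\geq N$. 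One then checks that for $n\geq N$ the subspace produced by Proposition~\ref{Prop-OrbitClosure-S-adic} can be taken to be exactly the image of $V_0$, because $V_0(\QQ_{S_n})$ is precisely the set of displacements $\{g(a_n)-a_n:g\in\GG(\QQ_{S_n})\}$'s span and the orbit-closure subspace is forced by property (i) together with maximality, exactly as the dimension-maximality argument in the first step of the proof of Theorem~\ref{Theo-ErgodicityAdelic} forces $V=V_0$.

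Having fixed $V_0$, the orbit-closure description is obtained by passing to the projective limit. Using that $\vfi(V_0(\AA))$ is the projective limit of the $\vfi_{S_n}(V_0(\QQ_{S_n}))$ along the $\GG(\QQ)$-equivariant maps $\pi_{S_n,S_m}$ (here one invokes Lemma~\ref{lem-Intersection} to express $\vfi(V_0(\AA))$ as the decreasing intersection of the sets $\vfi\bigl(V_0(\RRR)\times\prod_{p\in S_n}V_0(\ZZ_p)\times\prod_{p\notin S_n}\ZZ_p^d\bigr)$), I would argue: the inclusion $\overline{\GG(\QQ)x}\subseteq x+\vfi(V_0(\AA))$ is immediate from $g(a)\in a+V_0(\AA)$ and closedness of $\vfi(V_0(\AA))$; for the reverse inclusion, given $y\in x+\vfi(V_0(\AA))$ and any basic neighbourhood, its image in some $X_{S_n}$ lies in $x_n+\vfi_{S_n}(V_0(\QQ_{S_n}))=\overline{\GG(\ZZ[1/S_n])x_n}$, so some $g\in\GG(\ZZ[1/S_n])\subseteq\GG(\QQ)$ carries $x$ into that neighbourhood. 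A compactness/diagonal argument over the increasing family $S_n$ (mirroring the fundamental-domain argument in the proof of Proposition~\ref{Prop-OrbitClosure-S-adic}) then yields a point of $\overline{\GG(\QQ)x}$ equal to $y$, giving $x+\vfi(V_0(\AA))\subseteq\overline{\GG(\QQ)x}$.

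The main obstacle I anticipate is the coherence of the subspaces $V_n$ across levels: Proposition~\ref{Prop-OrbitClosure-S-adic} does not by itself guarantee that $V_n$ is the restriction of $V_{n+1}$, so the stabilization argument above (replacing each $V_n$, for $n\geq N$, by the canonical $V_0$ and re-verifying properties (i) and (ii) at that level) is where the real work lies; it rests on the uniqueness forced by maximal dimension together with the fact, already used repeatedly in the excerpt, that a $\GG(\QQ)$-invariant $\RRR$-subspace appearing inside $\prod_p\ZZ_p^d+\QQ^d$ is automatically defined over $\QQ$ by connectedness. Once that coherence is in hand, everything else is the routine projective-limit bookkeeping sketched above.
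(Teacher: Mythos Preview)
Your overall architecture matches the paper's proof exactly: apply Proposition~\ref{Prop-OrbitClosure-S-adic} at each level $S_n$, establish that the resulting subspaces $V_n$ stabilise via a maximal-dimension argument, and then pass to the limit using Lemma~\ref{lem-Intersection}. The reverse inclusion you sketch is also essentially the paper's argument, though you should make explicit the one missing detail: for $g\in\GG(\ZZ[1/S_n])$ one has $g(\ZZ_p^d)\subset\ZZ_p^d$ for every $p\notin S_n$, which is what guarantees that being close at level $S_n$ actually means being close in $X$ (no ``compactness/diagonal'' step is needed beyond this).

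There is, however, a genuine confusion in your proposed definition of $V_0$. The formula $V_0=\bigcap_{g\in\GG(\QQ)}W_g$ with $W_g=\ker(\pi_0\circ(g-I))$ from Lemma~\ref{Lemma-NonErgInvMeasure} does not compute what you want: in that lemma $\pi_0$ is the projection onto a complement of a \emph{given} subspace $V$, and $\bigcap_g W_g$ is the set of \emph{points} whose orbits stay in translates of $V$, not a candidate for $V$ itself. Your parenthetical reformulation, ``the largest $\GG(\QQ)$-invariant subspace with $g(a)\in a+V_0(\AA)$,'' is also off: the largest such subspace is all of $\QQ^d$; you need the smallest one, and there is no direct way to write it as an intersection of kernels over $\QQ$. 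The paper bypasses this entirely by defining $V_0$ as follows: among the subspaces $V_S$ produced by Proposition~\ref{Prop-OrbitClosure-S-adic} for all $S\in\mathcal{S}$, choose $S_0$ so that $\dim V_{S_0}$ is maximal, and set $V_0:=V_{S_0}$. Then for any $n$, the density of $\{g(x_{S_0})-x_{S_0}:g\in\GG(\ZZ[1/S_0])\}$ in $\vfi_{S_0}(V_0(\QQ_{S_0}))$ combined with $\GG(\ZZ[1/S_0])\subset\GG(\ZZ[1/S_n])$ forces $V_0\subset V_n$, and maximality of $\dim V_0$ gives $V_n=V_0$. Once you replace your $V_0$ by this one, the rest of your outline is correct and is exactly what the paper does.
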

 \begin{proof} 
 For $S\in \mathcal{S},$  let $x_S= \pi_S(x)\in X_S.$
 By  Proposition~\ref{Prop-OrbitClosure-S-adic},
 there  exists a unique  $\GG(\QQ)$-invariant linear subspace $V_S$ of $\QQ^d$ 
 such that  
 $$\overline{\GG(\ZZ[1/S])x_S}=x_S+\vfi_S(V_S(\QQ_S)).$$

  Fix $S_0\in \mathcal{S}$  such   that $V_0:=V_{S_0}$ has maximal
  dimension among all the subspaces $V_S$ for  $S\in \mathcal{S}.$
  
Let $\P\cup\{\infty\}=\cup_{n\geq 0} S_n$  for an increasing  sequence of
subsets $S_n \in \mathcal{S}$. Let $n\geq 1$
and write $V_n$ for $V_{S_n}.$

Since   $\{g(x_{S_0})-x_{S_0}\mid g\in \GG(\ZZ[1/S_0])\}$ is dense in $\vfi_S(V_0(\QQ_{S_0}))$ and since  
$$g(x_{S_n})-x_{S_n} \in \vfi_{S_n}(V_n(\QQ_{S_n})) \tout g\in \GG(\ZZ[1/S_0]),$$
it follows that 
$V_0 \subset V_n$ (see the first step in the proof of Theorem~\ref{Theo-ErgodicityAdelic})
and hence $V_n=V_0,$ by maximality of the dimension
of $V.$ Therefore, we have 
$$\overline{\GG(\ZZ[1/S_n])x_{S_n}}=x_{S_n}+\vfi_{S_n}(V_0(\QQ_{S_n})) \tout n\geq 0.$$
  
  We claim that  the  closure of the $\GG(\QQ)$-orbit of $x$   in $X$ coincides with $x+\vfi(V_0(\AA)).$

 Indeed, let $n\geq 0$ and   $g\in \GG(\ZZ[1/S_n])$.
 For every  $m\geq n,$ we have
 $g(x_{S_m})-x_{S_m} \in \vfi_{S_m}(V_0(\QQ_{S_m}))$
 and hence
 $$
 g(x)-x\in \vfi\left(V_0(\RRR) \times  \prod_{p\in S_m, p\neq \infty} V_0(\ZZ_p) \times \prod_{p\notin S_m} \ZZ_p^d\right)
  $$
 It follows  (see Lemma~\ref{lem-Intersection}) that $g(x)-x\in \vfi(V_0(\AA))$ for every 
 $g\in \GG(\ZZ[1/S_n])$. Therefore, $x+\vfi(V_0(\AA))$ is $\GG(\QQ)$-invariant.
  Since $x+\vfi(V_0(\AA))$ is closed in $X,$ this implies that
  $$\overline{\GG(\QQ)x} \subset x+\vfi(V_0(\AA)).$$
  
  Conversely, let $y \in \vfi(V_0(\AA)).$ 
  Then $y=\vfi(v)$ for $v=(v_p)_{p\in \P\cup \{\infty\}}$ in $V_0(\AA)$
  with  $v_p\in V_0(\ZZ_p)$ for all $p\in \P.$ 
     Let $U$ be a neighbourhood of 
  $y$ in $X.$ Then $U$ contains a set of the form
  $ \vfi\left(O_n\times \prod_{p\notin S_n} \ZZ_p^d \right)$
  for some $n\geq 0,$ where  $O_n$ is a neighbourhood
  of $(v_p)_{p\in S_n} $  in $\RRR^d\times \prod_{p\in S_n\setminus \{\infty\}}\ZZ_p^d$
  
  Since $\GG(\ZZ[1/S_n])x_{S_n}-x_{S_n}$  is dense in $\vfi_{S_n}(V_0(\QQ_{S_n}))$, there exists
  $g\in \GG(\ZZ[1/S_n])$ such that $g(x_{S_n})-x_{S_n}\in \vfi_{S_n}(O_n).$
  As $g(\ZZ_p^d)\subset \ZZ_p^d$ for every $p\notin S_n,$ it follows that  
  $$
  g(x)- x\in \vfi\left(O_n\times \prod_{p\notin S_n} \ZZ_p^d \right) \subset U.
  $$
  This shows that $x+y\in \overline{\GG(\QQ)x}.$

 \end{proof}

\section{Proof of Theorem~\ref{Theo-GenAlgGroup}}
\label{S:GenAlgGroup}
In this section, we will give the proof of Theorem~\ref{Theo-GenAlgGroup}.
\subsection{Invariant characters on $\QQ^d$}
\label{SS:InvCharQd}
Let $\GG$ be a connected algebraic subgroup of $GL_d$  defined over $\QQ.$
Using Fourier transform, we  establish the dual versions of Theorems~\ref{Theo-ErgodicityAdelic} and ~\ref{Theo-OrbitClosure-Adelic} in terms of $\GG(\QQ)$-invariant characters on  $\QQ^d$.

Recall (see Subsection~\ref{SS:RestrictionRadical}) that, after the choice of  nontrivial unitary character $e$ of $\AA$ which is trivial on  $\QQ,$ we can identify  $\widehat{\QQ^d}$ with $X=\AA^d/\QQ^d$ by means of the $GL_d(\QQ)$-equivariant map  
$$X\to \widehat{\QQ^d},\quad  a+\QQ^d\mapsto \la_a,$$ 
where 
$$
\lambda_{a}(q)= e(\langle a, q\rangle) \quad \tout q=(q_1, \dots, q_d) \in \QQ^d,
$$
and $\langle a, q\rangle= \sum_{i=1}^d a_i q_i$ for $a=(a_1, \dots, a_d)\in \AA^d.$

By Pontrjagin duality, the map 
$$q\mapsto (a+\QQ^d\mapsto \lambda_{a}(q))$$
 is a $GL_d(\QQ)$-equivariant isomorphism between $\QQ^d$ and the dual group 
 $\widehat{X}$ of $X.$ 
The annihilator of a subset $Y$ of $X$ in $\QQ^d\cong \widehat{X}$ is 
$$
Y^\perp=\left\{q\in \QQ^d\mid \la_a(q)=1 \quad \text{for all} \quad a+\QQ^d\in Y\right\}.
$$
If $Y$ is a closed subgroup of $X,$  the map 
$$ q+ Y^\perp\mapsto (a+\QQ^d\mapsto \lambda_{a}(q))$$
is an isomorphism between  $\QQ^d/ Y^{\perp}$ and $\widehat{Y}.$

We will need the following characterization of subsolenoids of $X.$
Recall that $\vfi$ is the projection $\AA^d\to X.$
\begin{lemma}
\label{Lem-Subsolenoids}
Let $\mathcal{Y}(X)$ be the set of connected and closed subgroups of $X$ and $\mathbf{Gr}(\QQ^d)$ the set of linear subspaces of $\QQ^d.$
\begin{itemize}
\item[(i)]  The map $Y\to Y^\perp$ is a $GL_d(\QQ)$-equivariant bijection between $\mathcal{Y}(X)$ and $\mathbf{Gr}(\QQ^d)$.
\item[(ii)] For $V\in \mathbf{Gr}(\QQ^d),$ we have  $\vfi(V(\AA))\in \mathcal{Y}(X)$; moreover, we have
$$\vfi(V(\AA))\cap \vfi(W(\AA))=\vfi((V\cap W)(\AA))$$
 for every $V, W\in \mathbf{Gr}(\QQ^d).$
\item[(iii)] The map $V\to \vfi(V(\AA))$ is a $GL_d(\QQ)$-equivariant bijection between $\mathbf{Gr}(\QQ^d)$ and $ \mathcal{Y}(X).$
\end{itemize}
In particular, for every $P\in \mathbf{Gr}(\QQ^d),$ there exists a unique $V\in \mathbf{Gr}(\QQ^d)$ such that $P= \vfi(V(\AA))^\perp.$
\end{lemma}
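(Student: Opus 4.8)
The plan is to deduce the whole statement from Pontrjagin duality for the compact group $X=\AA^d/\QQ^d$, whose dual we have identified (via the fixed character $e$) with the discrete group $\QQ^d$. I will use two standard facts about a compact abelian group $X$: the assignment $Y\mapsto Y^\perp$ is an inclusion-reversing bijection between the closed subgroups of $X$ and the subgroups of $\widehat X$, with $Y^{\perp\perp}=Y$ and $\widehat Y\cong\widehat X/Y^\perp$; and $Y$ is connected if and only if $\widehat Y$ is torsion-free. For (i) it then suffices to observe that a subgroup $H$ of $\QQ^d$ has torsion-free quotient $\QQ^d/H$ exactly when $H$ is saturated ($nq\in H$ with $n\neq 0$ forces $q\in H$), and that a saturated subgroup of the $\QQ$-vector space $\QQ^d$ is precisely a $\QQ$-linear subspace; the $GL_d(\QQ)$-equivariance of $Y\mapsto Y^\perp$ is clear from the defining pairing.

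For (ii) I would first check that $V(\AA)\cap\QQ^d=V$: pick a $\QQ$-complement $V_0$ of $V$ in $\QQ^d$; it splits $\AA^d=V(\AA)\oplus V_0(\AA)$ compatibly, and an element of $\QQ^d=V\oplus V_0$ lying in $V(\AA)$ must have zero $V_0$-component. Hence $\vfi$ induces a continuous injection of $V(\AA)/V$ into $X$; since $V(\AA)/V\cong(\AA/\QQ)^{\dim V}$ is compact and connected, its image $\vfi(V(\AA))$ is a closed connected subgroup of $X$, that is, $\vfi(V(\AA))\in\mathcal Y(X)$.

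The crux is the computation, for $V\in\mathbf{Gr}(\QQ^d)$, of the annihilator $\vfi(V(\AA))^\perp\subseteq\QQ^d$; I claim it equals $V^{\circ}:=\{q\in\QQ^d:\sum_i v_iq_i=0\ \text{for all}\ v\in V\}$, the orthogonal complement of $V$ for the standard non-degenerate bilinear form. The inclusion $V^{\circ}\subseteq\vfi(V(\AA))^\perp$ is immediate, since on $\vfi^{-1}(\vfi(V(\AA)))=V(\AA)+\QQ^d$ the pairing against $q\in V^{\circ}$ takes values in $\QQ$, on which $e$ is trivial. For the reverse inclusion I would argue that, by (i) together with (ii), $\vfi(V(\AA))^\perp$ is a $\QQ$-subspace, and that $\QQ^d/\vfi(V(\AA))^\perp\cong\widehat{\vfi(V(\AA))}\cong\QQ^{\dim V}$ (using (ii) and $\widehat{\AA/\QQ}\cong\QQ$) has dimension $\dim V$, so comparing dimensions with $V^{\circ}$ forces equality. (Alternatively, one argues directly that a nonzero $\AA$-submodule of $\AA$ carries a nontrivial restriction of $e$, using that a nontrivial character of $\AA$ trivial on $\QQ$ has all local components nontrivial.) I expect this annihilator identity — really, marshalling the standard adelic inputs and making sure $\vfi(V(\AA))$ is closed before invoking duality — to be the main point.

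Granting this, (iii) follows: $V\mapsto\vfi(V(\AA))^\perp=V^{\circ}$ is the orthogonality involution on $\mathbf{Gr}(\QQ^d)$, hence a bijection, so $V\mapsto\vfi(V(\AA))$ is the composite of this involution with the inverse of the bijection in (i), hence a $GL_d(\QQ)$-equivariant bijection $\mathbf{Gr}(\QQ^d)\to\mathcal Y(X)$. The intersection formula in (ii) then drops out by taking annihilators twice: since $V^{\circ}+W^{\circ}=(V\cap W)^{\circ}$ and all the subgroups in play are closed, $(\vfi(V(\AA))\cap\vfi(W(\AA)))^\perp=\vfi(V(\AA))^\perp+\vfi(W(\AA))^\perp=(V\cap W)^{\circ}=\vfi((V\cap W)(\AA))^\perp$, and applying $\perp$ again gives $\vfi(V(\AA))\cap\vfi(W(\AA))=\vfi((V\cap W)(\AA))$. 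Finally, the last assertion is immediate, as $V\mapsto\vfi(V(\AA))^\perp$ is the composition of the bijections of (iii) and (i), hence a bijection of $\mathbf{Gr}(\QQ^d)$ onto itself, so every $P\in\mathbf{Gr}(\QQ^d)$ equals $\vfi(V(\AA))^\perp$ for a unique $V$.
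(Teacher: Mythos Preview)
Your proof is correct and proceeds along a different route from the paper's. The paper establishes the intersection formula in (ii) by a hands-on adelic computation (choosing representatives with integral components at all finite places and arguing locally), and proves surjectivity in (iii) by an explicit construction: given $Y\in\mathcal Y(X)$ it sets $V=(Y^\perp)^{\circ}$ and checks $Y=\vfi(V(\AA))$ by writing $Y=(Y^\perp)^\perp$ in coordinates using a basis of $Y^\perp$. Your argument instead pivots on a single clean identity, $\vfi(V(\AA))^\perp=V^{\circ}$, proved by the obvious inclusion together with a dimension count via $\widehat{\vfi(V(\AA))}\cong\QQ^{\dim V}$; from this (iii) is a composition of two bijections, and the intersection formula drops out of the lattice identity $(A\cap B)^\perp=A^\perp+B^\perp$ (valid here because the dual $\QQ^d$ is discrete) combined with $V^{\circ}+W^{\circ}=(V\cap W)^{\circ}$. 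This is more conceptual and avoids all coordinate work; the paper's approach has the small advantage of keeping the order (i)--(ii)--(iii) literal and of exhibiting the inverse map in (iii) explicitly. One minor remark: in your dimension count you are implicitly using that an abelian-group isomorphism between $\QQ$-vector spaces is automatically $\QQ$-linear, so that $\QQ^d/\vfi(V(\AA))^\perp\cong\QQ^{\dim V}$ as groups forces equality of $\QQ$-dimensions; this is true and easy, but worth stating.
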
 
\begin{proof}
(i) Let $Y$ be a closed subgroup of $X.$ Then $Y$ is connected
if and only $\widehat{Y}$ is torsion-free (see  Corollary (24.19) in\cite{HewittRoss1}),
 that is, if and only if $\QQ^d/Y^\perp$ is torsion-free. It follows that $Y$ is connected
if and only if $Y^\perp$ is a linear subspace of $\QQ^d.$ 

\noindent
(ii) Let $V\in \mathbf{Gr}(\QQ^d)$. Since the canonical embedding of $\RRR$ is dense in $\AA/\QQ$, the subgroup $\vfi(V(\RRR))$ is dense in $\vfi(V(\AA))$ and therefore $\vfi(V(\AA))$ is connected. Moreover,  $\vfi(V(\AA))$ is closed as it is the continuous
image of  the compact solenoid $V(\AA)/V(\QQ)$.

Let  $V, W\in \mathbf{Gr}(\QQ^d)$. It is clear that $\vfi((V\cap W)(\AA))$
is contained in $\vfi(V(\AA))\cap \vfi(W(\AA)).$
Let $a \in V(\AA)$ be such  that $\vfi(a)\in \vfi(W(\AA)).$ 
Writing $a= (a_p)_{p\in \P\cup \{\infty\}},$
we may assume that $a_p\in V(\ZZ_p)$ for every $p\in \P$.
So, there exists $q\in \QQ^d$ such that $a_{\infty}+q\in W(\RRR)$ and
$a_p+ q\in W(\ZZ_p)$ for every $p\in \P.$ It follows
that $q\in \ZZ^d$. Hence, we have
$a_{\infty}\in W(\RRR)+\ZZ^d.$ 
Observe that  $ta\in W(\AA)$ for every $t\in \QQ;$ it follows
that $ta_{\infty}\in W(\RRR)+\ZZ^d$ for every $t\in \QQ$ and hence 
for every $t\in \RRR,$ since $W(\RRR)+\ZZ^d$ is closed in $\RRR^d.$
By connectedness, this implies that $a_{\infty}\in W(\RRR).$ 
Since $a_{\infty}+q\in W(\RRR),$ we have $q\in W(\ZZ)$ and so
$a_p\in W(\ZZ_p)$ for every $p\in \P.$ This shows that $a\in (V\cap W)(\AA)$.
So, $\vfi(V(\AA))\cap \vfi(W(\AA))=\vfi((V\cap W)(\AA)).$

\noindent
(iii)
 As already mentioned (see the proof of Lemma~\ref{Lemma-GenericSingular}), 
 we have $\vfi(V(\AA))\neq \vfi(W(\AA))$ for   every $V,W\in \mathbf{Gr}(\QQ^d)$
with $V\neq W.$

Let $Y\in \mathcal{Y}(X)$. Then $Y^\perp\in\mathbf{Gr}(\QQ^d),$ by (i).
Consider the linear subspace 
$$
V:=\{a\in \QQ^d\mid  \langle a, q\rangle=0 \quad \text{for all} \quad  q\in Y^\perp\}.
$$
of $\QQ^d.$  We claim that $Y= \vfi(V(\AA)).$

Indeed, let $q_1, \dots, q_s$ be a basis of $Y^\perp$. For every $i\in\{1,\dots, s\}$ and $t\in \QQ,$
let $V_i=\{a\in \QQ^d\mid  \langle a, q_i\rangle=0 \}$ and choose $a_{i,t}\in \QQ^d$  such that
$\langle a_{i,t}, q_i\rangle=t$.
Then
$$
\{a\in \AA^d\mid \langle a, q_i\rangle=t \}= V_i(\AA)+a_{i,t}.
$$
We have
$$
\begin{aligned}
Y&= (Y^\perp)^\perp=\{a+\QQ^d\in X\mid \la_a(q)=1 \quad \text{for all} \quad  q\in Y^\perp\}\\
&= \{a+\QQ^d\in X \mid \la_{a}(tq)=1 \quad \text{for all} \quad  q\in Y^\perp, t \in \QQ\}\\
&= \{a+\QQ^d\in X\mid e(t\langle a, q\rangle)=1 \quad \text{for all} \quad  q\in Y^\perp, t\in \QQ\}\\
&=\{a+\QQ^d\in X \mid \langle a, q_i\rangle \in \QQ \quad \text{for every} \quad  i=1,\cdots, s\}\\
&=\bigcap_{i=1}^s \bigcup_{t\in \QQ} \vfi (V_i(\AA)+a_{i,t})=\bigcap_{i=1}^s \vfi (V_i(\AA)) 
\end{aligned}
$$
Using  (ii), it follows that $Y=\vfi\left((\bigcap_{i=1}^s V_i)(\AA)\right)= \vfi(V(\AA)).$

\end{proof}

Let $\GG$ be a connected algebraic subgroup of $GL_d$  defined over $\QQ.$
For $a\in \AA^d,$ let $P_a$ be the $\QQ$-linear span
of $\{\la_{g(a)}\la_{-a}\mid g\in \GG(\QQ)\}^\perp$
in  $\QQ^d;$ so,
$$
\begin{aligned}
P_a&= 
\bigcap_{g\in \GG(\QQ), t\in \QQ}\{q\in \QQ^d\mid \la_{g(a)} (tq)= \la_{a}(tq)\} \\
&=\bigcap_{g\in \GG(\QQ)}\{q\in \QQ^d\mid \langle g(a)-a, q\rangle\in \QQ\}\\
\end{aligned}
$$
and $P_a$ is a  $\GG(\QQ)$-invariant linear  subspace of $\QQ^d$. Define 
$\chi_a: \QQ^d\to \CC$ by  
$$\chi_a (q)=\begin{cases}
\la_a(q)&\text{if } q\in P_a \\
0&\text{otherwise}.
\end{cases}
$$
Observe that 
$\chi_a$ is $\GG(\QQ)$-invariant  and is of positive type
(see Proposition~\ref{Pro-InducedTrace}), that is, 
$\chi_a\in \Tr(\QQ^d, \GG(\QQ)).$

Recall that two points $x,y\in X$ belongs to the same $\GG(\QQ)$-quasi-orbit 
if their  $\GG(\QQ)$-orbits  have the same closure in $X.$
\begin{theorem}
\label{Theo-ErgodicityAdelic-Dual}
Let $\GG$ be a connected algebraic subgroup of $GL_d$  defined over $\QQ.$
Assume that $\GG(\QQ)$ is generated by unipotent  one-parameter 
 subgroups.
The  map  
$$\AA^d\to \Tr(\QQ^d, \GG(\QQ)), \quad a\mapsto \chi_a$$
has $\Char(\QQ^d, \GG(\QQ))$ as image and factorizes to a bijection
$$
X/{\sim} \to \Char(\QQ^d, \GG(\QQ)),
$$
where $X/{\sim}$ is the space of $\GG(\QQ)$-quasi-orbits in $X=\AA^d/\QQ^d.$
\end{theorem}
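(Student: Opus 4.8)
The plan is to transfer the measure-theoretic statements of Theorem~\ref{Theo-ErgodicityAdelic} and the orbit-closure statement of Theorem~\ref{Theo-OrbitClosure-Adelic} across the Fourier-Stieltjes transform, using Proposition~\ref{Prop-Abelian} as the bridge between $\GG(\QQ)$-invariant (ergodic) probability measures on $\widehat{\QQ^d}\cong X$ and $\GG(\QQ)$-invariant (extremal) traces on $\QQ^d$. First I would note that, under the identification $X\cong\widehat{\QQ^d}$ fixed in this subsection, the point $x=\vfi(a)$ corresponds to the character $\lambda_a$, and that a coset $x+\vfi(V(\AA))$ of a subsolenoid is a $\GG(\QQ)$-invariant closed (hence compact) subgroup-coset of $X$; the normalized Haar measure $\mu_{x+Y}$ supported on it is exactly the ergodic invariant measure produced by Theorem~\ref{Theo-ErgodicityAdelic}. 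So the first step is: given $\chi\in\Char(\QQ^d,\GG(\QQ))$, apply $\mathcal{F}^{-1}$ (Proposition~\ref{Prop-Abelian}) to get $\mu\in\Prob(X)^{\GG(\QQ)}_{\rm erg}$, then apply Theorem~\ref{Theo-ErgodicityAdelic} to write $\mu=\mu_{x+Y}$ with $x=\vfi(a)$, $Y=\vfi(V_0(\AA))$ and $\{g(a)-a\mid g\in\GG(\QQ)\}$ dense in $V_0(\AA)$.

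The second step is the explicit computation of $\mathcal{F}(\mu_{x+Y})$. Here $Y=\vfi(V_0(\AA))$ is a closed subgroup of $X$; its annihilator in $\QQ^d\cong\widehat X$ is, by Lemma~\ref{Lem-Subsolenoids}, a linear subspace $P:=Y^\perp$, and $\mathcal{F}$ of the Haar measure on a coset $x+Y$ is $\lambda_a$ on $P$ and $0$ off $P$ — this is the standard fact that the Fourier transform of $\mu_{x+Y}$ is $(\text{evaluation at }x)\cdot\mathbf 1_{Y^\perp}$. So the key identity to verify is $P=Y^\perp=P_a$, where $P_a$ is the subspace defined just before the theorem statement. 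This is where the density clause ``$\{g(a)-a\mid g\in\GG(\QQ)\}$ dense in $V_0(\AA)$'' earns its keep: by definition $q\in P_a$ iff $\langle g(a)-a,q\rangle\in\QQ$ for all $g\in\GG(\QQ)$, i.e. iff $\lambda_{g(a)-a}(q)=1$ for all $g$; by continuity of $b\mapsto\lambda_b(q)$ and density of $\{g(a)-a\}$ in $V_0(\AA)$, this holds iff $\lambda_b(q)=1$ for all $b\in V_0(\AA)$, i.e. iff $q\in\vfi(V_0(\AA))^\perp=Y^\perp$. Hence $P_a=Y^\perp$ and $\chi_a=\mathcal{F}(\mu_{x+Y})=\mathcal{F}(\mu)=\chi$, which shows $\Char(\QQ^d,\GG(\QQ))$ is contained in the image of $a\mapsto\chi_a$. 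Conversely, for arbitrary $a\in\AA^d$ one first replaces $a$ by a representative with dense difference-orbit (using Theorem~\ref{Theo-ErgodicityAdelic} applied to the ergodic measure $\mu_{x+\overline{\GG(\QQ)x}}$, or directly Theorem~\ref{Theo-OrbitClosure-Adelic} to identify $\overline{\GG(\QQ)x}=x+\vfi(V_0(\AA))$); this does not change $\chi_a$, since $P_a$ and $\lambda_a|_{P_a}$ depend only on the quasi-orbit of $x$ — indeed $P_a$ is visibly determined by the closure of $\{g(a)-a\}$, hence by $\overline{\GG(\QQ)x}$, and $\lambda_a$ restricted to $P_a=Y^\perp$ is constant on $x+Y$. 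Then the same computation shows $\chi_a=\mathcal{F}(\mu_{x+Y})\in\Char(\QQ^d,\GG(\QQ))$ by Proposition~\ref{Prop-Abelian}.

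The third step is the bijectivity of the induced map $X/{\sim}\to\Char(\QQ^d,\GG(\QQ))$. Surjectivity follows from step two. For injectivity and well-definedness, I would argue that $\chi_a$ determines, and is determined by, the measure $\mu_{x+\overline{\GG(\QQ)x}}$, which in turn is determined by and determines the quasi-orbit of $x$: well-definedness is the observation just made that $\chi_a$ depends only on $\overline{\GG(\QQ)x}$; conversely, from $\chi_a$ one recovers $P_a=\{q:\chi_a(q)\neq 0\}$, hence its annihilator-complement $V_0$ via Lemma~\ref{Lem-Subsolenoids}, and then $\overline{\GG(\QQ)x}=x+\vfi(V_0(\AA))$ is pinned down because $\lambda_a=\chi_a$ on $P_a$ fixes the coset. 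Combining, $\chi_{a_1}=\chi_{a_2}$ iff $x_1,x_2$ have the same quasi-orbit, which is exactly injectivity of the factored map. I expect the main obstacle to be bookkeeping the identifications — keeping straight that $P_a$ is simultaneously (a subspace of $\QQ^d$), (the annihilator $Y^\perp$ of a subsolenoid), and (the support of $\chi_a$) — together with the care needed to reduce a general $a$ to one with dense difference-orbit without changing $\chi_a$; the genuinely substantive input, Ratner-type rigidity, has already been absorbed into Theorems~\ref{Theo-ErgodicityAdelic} and \ref{Theo-OrbitClosure-Adelic}, so no new hard analysis is required here.
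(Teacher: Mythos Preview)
Your plan is correct in its overall shape and uses the same ingredients as the paper: Proposition~\ref{Prop-Abelian} as the bridge, the Fourier computation $\mathcal{F}(\mu_{x+Y})=\lambda_a\cdot\mathbf 1_{Y^\perp}$, and Theorems~\ref{Theo-ErgodicityAdelic} and \ref{Theo-OrbitClosure-Adelic} for the substantive input. The paper organizes things in the reverse order---starting from an arbitrary $a$, it first uses Lemma~\ref{Lem-Subsolenoids} to produce $V$ with $P_a=\vfi(V(\AA))^\perp$, then proves ergodicity of $\mu_{x+Y}$ and the orbit-closure identity as separate steps---but the substance is the same.

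There is one genuine gap in your backward direction. To show $\chi_a\in\Char(\QQ^d,\GG(\QQ))$ for arbitrary $a$, you correctly identify $\chi_a=\mathcal{F}(\mu_{x+Y})$ with $x+Y=\overline{\GG(\QQ)x}$ via Theorem~\ref{Theo-OrbitClosure-Adelic}, but to invoke Proposition~\ref{Prop-Abelian} you must know that $\mu_{x+Y}$ is \emph{ergodic}. Your parenthetical ``apply Theorem~\ref{Theo-ErgodicityAdelic} to the ergodic measure $\mu_{x+\overline{\GG(\QQ)x}}$'' is circular---you have not yet shown this measure is ergodic---and Theorem~\ref{Theo-OrbitClosure-Adelic} by itself gives only topological, not measure-theoretic, information. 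The paper closes this gap (its second step) via Proposition~\ref{Prop-ErgodicityAffineSubspace}: since $\GG(\QQ)$ is Zariski-connected it has no proper finite-index subgroup, so if $\mu_{x+Y}$ were non-ergodic there would exist a proper closed connected $Z\subsetneq Y$ with $g(x')-x'\in Z$ for all $x'\in x+Y$ and all $g\in\GG(\QQ)$; taking $x'=x$ contradicts the density of $\{g(x)-x\}$ in $Y$ that you just obtained from the orbit-closure theorem. Equivalently, use the implication (iii)$\Rightarrow$(i) of that proposition: a single dense orbit already forces ergodicity. You should make this step explicit.
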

\begin{proof}
Identifying  $\QQ^d$ with
 $\widehat{X}$, the  Fourier transform on $X$ 
 is the map
$$\mathcal{F}: \Prob(X) \to \Tr(\QQ^d)$$
given by 
$$
\mathcal{F}(\mu)(q)= \int_{X} \la_a(q) d\mu(a+\QQ^n) \tout \mu \in \Prob(X), q\in \QQ^d.
$$
Recall (see Proposition~\ref{Prop-Abelian})  that $\mathcal{F}$ restricts to a bijection 
$$\mathcal{F}: \Prob(X)^{\GG(\QQ)}_{\rm erg} \to \Char(\QQ^d, \GG(\QQ)).$$

Let $a\in \AA^d.$ By Lemma~\ref{Lem-Subsolenoids},
there exists a   $\GG(\QQ)$-invariant linear subspace $V$ of $\QQ^d$
such that $P_a=Y^\perp$ for $Y=\vfi(V(\AA))$. 

Let $\mu=\mu_{x+Y}$ for $x=\vfi(a)$.

\vskip.2cm
$\bullet$ {\it First step.} We claim that $\mathcal{F}(\mu_{x+Y})= \chi_a.$

Indeed, for every $q\in \QQ^d,$ we have 
$$
\begin{aligned}
\mathcal{F}(\mu_{x+Y})(q)&=\int_{Y} \la_{a+b}(q) d\mu_Y(b+\QQ^d)\\
&=\la_a(q) \int_{Y} \la_{b}(q) d\mu_Y(b+\QQ^d).
\end{aligned}
$$
Now, $\int_{Y} \la_{b}(q) d\mu_Y(b+\QQ^d)=0$, whenever
$b+\QQ^d \mapsto \la_b(q)$ is a non-trivial character of $Y$, by the
orthogonality relations.
This proves the claim.

\vskip.2cm
$\bullet$ {\it Second step.}  We claim that $\chi_a \in \Char(\QQ^d, \GG(\QQ))$.
In view of step one, it suffices to show that  $\mu_{x+Y}$ is ergodic

Assume, by contradiction, that $\mu_{x+Y}$ is not ergodic.
Observe that $\GG(\QQ)$ is connected (in the Zariski topology of $GL_d(\QQ)$) and has therefore no proper  finite index subgroup.
Therefore, by Proposition~\ref{Prop-ErgodicityAffineSubspace},
there exists a proper closed  connected subgroup $Z$ of $Y$
 such that $g(x+y)\in x+y+ Z$ for every $g\in \GG(\QQ)$ and  $y\in Y.$
 By Lemma~\ref{Lem-Subsolenoids}, we can write
 $Z=\vfi(W(\AA))$ for a  $\GG(\QQ)$-invariant proper $\QQ$-linear subspace of $V$.
 So, we have $g(x)-x\in \vfi( W(\AA))$ for every $g\in \GG(\QQ)$.
 This implies that
 $$\vfi(W(\AA))^\perp=P_{a}= Y^\perp=\vfi(V(\AA))^\perp,$$
 which is a contradiction, since $W\neq V.$
 
 \vskip.2cm
$\bullet$ {\it Third step.}  We claim that the closure of the $\GG(\QQ)$-orbit of 
$x$ coincides with $x+Y.$

Indeed,  by Theorem~\ref{Theo-OrbitClosure-Adelic},
 there exists a  $\GG(\QQ)$-invariant linear subspace $W$ of $\QQ^d$ 
 such that the closure of the $\GG(\QQ)$-orbit of $x$   in $X$ coincides with 
 $x+\vfi(W(\AA)),$ that is, $g(x)-x\in \vfi( W(\AA))$ for every $g\in \GG(\QQ)$.
 As in the second step, this implies that $W=V.$

  \vskip.2cm
$\bullet$ {\it Fourth step.}  We claim that $ \Char(\QQ^d, \GG(\QQ))= \{\chi_a\mid a\in \AA^d\}.$ Indeed, this follows from Theorem~\ref{Theo-ErgodicityAdelic} and the first two 
steps.

\vskip.2cm
$\bullet$ {\it Fifth step.} Let $a_1, a_2\in \AA^d$. We claim that $\chi_{a_1}=\chi_{a_2}$ 
if and only if $x_1=\vfi(a_1)$ and $x_2=\vfi(a_2)$ belong to the same $\GG(\QQ)$-quasi-orbit.

Indeed, observe that $P_{g(a)}=P_a$ and $\chi_{g(a)}= \chi_{a}$ for every 
$a\in \AA^d$ and $g\in \GG(\QQ)$
It follows that $\chi_{a_1}=\chi_{a_2}$   if $x_1$ and $x_2$ belong to the same quasi-orbit.

Conversely, assume that $\chi_{a_1}=\chi_{a_2}$.
Then $P_{a_1}= P_{a_2}$ and $\la_{a_1}=\la_{a_2}$ on $P_{a_1}.$
So, $Y_1=Y_2$, where $Y_i= P_{a_i}^\perp$ for $i=1,2$, and $x_1-x_2\in Y_1.$
Hence, $x_1+Y_1=x_2+Y_2$ and so, by the third step,  $x_1$ and $x_2$ belong to the same quasi-orbit.

\end{proof}
 \subsection{Conclusion of the proof of Theorem~\ref{Theo-GenAlgGroup}}
  \label{SS:FinPreuve}
  Let $G=\GG(\QQ)$ be as in the statement of Theorem~\ref{Theo-GenAlgGroup},
$G=LU$  a Levi decomposition of $G$, and $\mathfrak{u}$ the Lie algebra of $U.$

Let $\psi\in \Char(G)$.
 \vskip.2cm
$\bullet$ {\it First step.} 
Set 
$\vfi:= \psi|_{U}\circ \exp.$ 
There exists $\la\in \widehat{u}$ such that $\vfi$ coincides with  the trivial extension to $\mathfrak{u}$ of the restriction of $\la$ to $\mathfrak{p}_\la,$
where $\mathfrak{p}_\la$ is the $G$-invariant linear subspace of $\mathfrak{u}$ 
given by 
$$\mathfrak{p}_\la=\{X\in \mathfrak{u}\mid \lambda(\Ad(g)(tX))=\lambda(tX) \tout g\in G, t\in \QQ\}.$$

Indeed, as discussed in Subsection~\ref{SS:RestrictionRadical},
$\vfi\in \Char(\mathfrak{u}, G)$. The claim follows therefore from Theorem~\ref{Theo-ErgodicityAdelic-Dual}.

\vskip.2cm
$\bullet$ {\it Second step.} We claim that 
$$\psi(\exp (X) g)= \psi(\exp(X))\psi(g) \tout g\in G,  X\in \mathfrak{p}_\lambda.$$

Indeed, since 
$$|\psi(\exp (X))|=|\vfi(X)|= |\la(X)|=1$$
for every $ X\in \mathfrak{p}_\lambda,$  the claim follows from 
Proposition~\ref{Proposition-KernelProjectiveKernel}.ii.

\vskip.2cm

$\bullet$ {\it Third  step.} For every  $g\in G$ and $X\in \mathfrak{u},$
we have 
$$\psi(g)=\psi\left(\exp(-X) \exp(\Ad(g)(X)) g\right).$$

This is indeed the case, since
$$
\begin{aligned}
\psi(g)&=\psi(\exp(-X)g\exp(X))\\
&=\psi(\exp(-X) \exp(\Ad(g)(X)) g).
\end{aligned}.
$$

\vskip.1cm
Recall that 
$$
\mathfrak{k}_\lambda=\left\{X\in \mathfrak{u}\mid 
\la(\Ad(g)(tX))=1 \tout g\in G, t\in \QQ\right\};
$$
observe that $K_\la:=\exp(\mathfrak{k}_\la)$ is in general strictly
contained in  $K_\psi\cap U,$ where 
$$K_\psi:=\{g\in G\mid \psi(g)=1\}.$$

Let 
$$G_\la=\left\{g\in G\mid \Ad(g)(X)\in X+\mathfrak{k}_\lambda \tout X\in  \mathfrak{u}\right\}.$$

\vskip.2cm
Assume that $G_\la\neq G$. Observe that this implies that $\la\neq 1_{\mathfrak{u}}.$
Let $g\in G\setminus G_\la$ and 
fix  $X\in  \mathfrak{u}$ such that $\Ad(g)(X)-X
\notin \mathfrak{k}_\lambda.$ Let 
$$
A_{g,X}:=\left\{t\in \QQ\mid \exp(-tX) \exp(\Ad(g)(tX)\in K_\psi\right\}.
$$
By Lemma~\ref{Lem-CommutarorAndSubgroup}, $A_{g,X}$ is a subgroup of $\QQ.$

$\bullet$ {\it Fourth step.} 
 We claim that $A_{g,X}\neq \QQ.$

Indeed, assume, by contradiction, that $A_{g,X}=\QQ,$ that is, 
$$ \exp(-tX) \exp(\Ad(g)(tX)\in K_\psi \tout t\in \QQ.$$ 
By the Campbell-Hausdorff  formula,   there exists $Y_1, Y_2, \dots, Y_r\in \mathfrak{u}$ 
such that 
$$
 \exp(-tX) \exp(\Ad(g)(tX))= \exp\left(\sum_{k=1}^r t^k Y_k\right) \tout t\in \QQ,
 $$
 where $Y_1= \Ad(g)(X)-X.$  We have then
 $$
 \la(\sum_{k=1}^r t^k Y_k)=1 \tout t\in \QQ.
 $$
 
Identifying $\mathfrak{u}$ with $\QQ^d$ via a basis $\{X_1, \dots X_d\},$ the
character $\la$ of  $\mathfrak{u}$ is given by some  $a=(a_1, \dots, a_d)\in \AA^d$ via the formula
 $$
 \la (\sum_{i=1}^d q_i X_i)=e( \sum_{i=1}^d a_i q_i)  \tout (q_1, \dots, q_d) \in \QQ^d
 $$
 for a nontrivial unitary character $e$ of $\AA$ which is trivial on  $\QQ$
 (see Subsection~\ref{SS:RestrictionRadical}).
 It follows that 
 $$
 e\left(\sum_{k=1}^r t^k(\sum_{i=1}^d a_i q_{k,i})\right) =1\tout t\in \QQ,
 $$
 where $(q_{k,i})_{i=1}^d$ are the coordinates of $Y_k$ in $\{X_1, \dots X_n\}.$
 This implies that $\sum_{i=1}^d a_i q_{k,i} \in \QQ$ for every $k=1, \dots, r.$
 Indeed, otherwise the image of the set
 $$\{\sum_{k=1}^r t^k(\sum_{i=1}^d a_i q_{k,i})\mid t\in \QQ\}$$
 would be dense in $\AA/\QQ$ (see \cite[Theorem 2]{Corwin-Pfeffer}
 or \cite[Theorem 5.2]{Bergelson}) and this would contradicts the non-triviality of $e.$
 
 In particular, we have $\sum_{i=1}^d a_i q_{1,i} \in \QQ$ and, since $Y_1= \Ad(g)X-X,$ 
 we obtain $\la (\Ad(g)(tX)-tX)=1$ for all $t\in \QQ.$ Therefore,  $\Ad(g)X-X
\in \mathfrak{k}_\lambda$ and this is a contradiction.
 
\vskip.2cm
$\bullet$ {\it Fifth  step.} 
Let  $g\in G\setminus G_\la$. 
We claim that $\psi(g)=0.$

Indeed, let $X\in \mathfrak{u}$ with $\Ad(g)(X)-X
\notin \mathfrak{k}_\lambda$ and 
let  $A_{g,X}\subset \QQ$ be as in the fourth step. Set
$$
B_{g, X}:=\left\{t\in \QQ\mid \exp(-tX) \exp(\Ad(g)(tX)\in P_\psi\right\}.
$$
Then $A_{g, X}\subset B_{g, X}$ and, by Lemma~\ref{Lem-CommutarorAndSubgroup} again, $B_{g, X}$ is a subgroup of $\QQ.$
Two cases may occur.

{--First case:} $A_{g, X}\neq B_{g, X}.$ So, there exists $t\in \QQ$ such that 
$$\exp(-tX) \exp(\Ad(g)(tX)\in P_\psi\setminus K_\psi.$$
Then, using the third and second steps, we have
$$
\begin{aligned}
\psi(g)=\psi(\exp(-X) \exp(\Ad(g)(X)) \psi(g)
\end{aligned}
$$
and hence $\psi(g)=0,$ since $\psi( (\exp(-X) \exp(\Ad(g)(X))\neq 1.$

  {--Second case:} $A_{g, X}=B_{g, X}.$ Then $ B_{g, X}$ is a proper subgroup of 
$\QQ,$ by the the fourth step. So, 
$B_{g, X}$ has infinite index in $\QQ$ an we can find an infinite sequence
$(t_n)_{n\geq 1}$ in $\QQ$ such that 
$t_n-t_m\notin B_{g, X}$ for all $n\neq m.$

Set 
$$u_n:=\exp(-t_nX)\exp(\Ad(g)(t_nX)) \tout n\geq 1.$$
 For $n\neq m,$ we have
 $$
 \begin{aligned}
 u_n u_m^{-1} 
 &=\exp(-t_nX)\exp(g(t_n-t_m)X))\exp(t_mX)\\
 &=\exp(-t_mX)\left(\exp(-(t_n-t_m)X)\exp(\Ad(g)(t_n-t_m)X)\right)\exp(t_mX);
\end{aligned}
$$
 since $\exp(-(t_n-t_m)X)\exp(\Ad(g)(t_n-t_m)X\notin P_\psi$ and since 
 $P_\psi$ is a normal subgroup of $U,$ we have therefore
 $u_n u_m^{-1} \notin P_\psi$ and hence
 $$
 \psi(u_n u_m^{-1})=0 \tout n\neq m,
 $$
 by the first step.
 
 As $u_n$ coincides with the commutator $[\exp(t_n X), g]$ in $G,$
it follows from Lemma~\ref{Lem-Hilbert} that $\psi(g)=0.$

\vskip.2cm
It remains to determine the restriction of $\psi$ to $G_\la.$

Since $\psi|_{K_\la}=1_{K_\la},$ we may view $\psi$ as character
of 
$G/K_\lambda,$ which is the group of $\QQ$-points of an algebraic group,
and we can therefore assume that  $K_\la=\{e\}.$
Then $G_\la$ is the centralizer of $U$ in $G$
and is the group of $\QQ$-points of an algebraic normal subgroup of $\mathbf{G}$.
Let $G_\la=L_1 U_1$ be a Levi decomposition of $G_\la.$ Since $U_1$ is a unipotent characteristic subgroup of $G_\la,$  we have $U_1\subset U$.
Moreover, $L_1$ is  the group of $\QQ$-points of an algebraic subgroup $\mathbf{L}_1$
of $\mathbf{G}$ and so $\mathbf{L}$ is contained in a Levi subgroup of $\mathbf{G}.$ As
two Levi subgroups of $\mathbf{G}$ are conjugate by an element of $U$ (see \cite{Mostow}), we 
can assume that $L_1\subset L,$ that is, $L_1=L_\la$ and so, 
$G_\la=  L_\la Z(U).$ 

\vskip.2cm
$\bullet$ {\it Sixth step.}  We claim that 
there exists $\vfi_1\in \Char(L_\la)$ such that 
$$
\psi(gu)= \vfi_1(g) \psi(u) \tout g\in L_\la, u\in U.
$$

Indeed,
 we can find a normal subgroup $H$ of $L$ which centralizes 
$L_\la$ such that $L_\la \cap H$ is finite and such that
$L=L_\la H$ (see Proposition~\ref{Prop-CharProduct2} below).
Then $G= L_\la H U$ and $H U$ centralizes $L_\la.$ So,
the claim follows from Proposition~\ref{Prop-CharProductGroup}
(see also Corollary~\ref{Cor-CharProduct}).

 \vskip.2cm
$\bullet$ {\it Seventh step.} Let $ \la\in \widehat{\mathfrak{u}}, \vfi\in \Char(L_\la),$
and let $\Phi_{(\la, \vfi)}: G\to \CC$ be defined as in Theorem~\ref{Theo-GenAlgGroup}.
We claim that $\psi:=\Phi_{(\la, \vfi)}\in \Char(G).$

Indeed, it is clear (see Proposition~\ref{Pro-InducedTrace}) that $\psi\in \Tr(G)$.
Write 
$$\psi= \int_{\Omega} \psi_\omega d\nu(\omega)$$ as an integral 
over a probability space $(\Omega, \nu)$ with $\psi_\omega\in \Char(G)$
for every $\omega$ (see Remark~\ref{Rem-Choquet}). 

Then  $\vfi:=\psi|_U \circ \exp$ coincides with  the trivial extension to $\mathfrak{u}$ of the restriction of $\la$ to $\mathfrak{p}_\la,$
where $\mathfrak{p}_\la$ is defined as above.
It follows from  Theorem~\ref{Theo-ErgodicityAdelic-Dual}
that  $\vfi\in \Char(\mathfrak{u}, G)$. This implies that the restriction of 
$\psi_{\omega}$ to $U$ coincides with $\psi|_U$ for ($\nu$-almost)  every 
$\omega.$ 

Let $\omega\in \Omega.$  The fifth step, applied to $\psi_{\omega} \in \Char(G),$ shows that
$\psi_\omega=0$ on $G\setminus G_\la,$ where $G_\la$
is defined as above. By the sixth step, also applied to $\psi_{\omega},$
there exists 
$\vfi_1^{\omega} \in \Char(L_\la)$ such that 
$$\psi(gu)= \vfi_1^\omega(g) \psi(u) \tout g\in L_\la, u\in U.
$$

As a result, we have 
$$
\vfi_1= \int_{\Omega} \vfi_1^\omega d\nu(\omega).
$$
Since $\vfi_1\in \Char(L_\la),$ it follows that $\vfi_1^{\omega}=\vfi_1$ 
and hence that $\vfi=\vfi_\omega$ 
for ($\nu$-almost)  every  $\omega.$ This shows that $\vfi\in \Char(G).$

$\bullet$ {\it Eighth step.} 
 Let $\la_1, \la_2\in \widehat{\mathfrak{u}}$ and $ \vfi_1\in \Char(L_{\la_1}),
 \vfi_2\in \Char(L_{\la_2}).$ 
 We claim that $\Phi_{(\la_1, \vfi_1)}= \Phi_{(\la_2, \vfi_2)}$
 if and only if $\la_1$ and $\la_2$ have the same $G$-orbit closure and $\vfi_1=\vfi_2.$
 
 Indeed, set $\psi_i= \Phi(\la_i, \vfi_i)$ for $i=1,2.$ It follows from Theorem~\ref{Theo-ErgodicityAdelic-Dual}
 that $\psi_1|_U= \psi_2|_U $ if and only if the closures of the $G$-orbits of  $\la_1$ and $\la_2$  coincide. If this is the case, then $\mathfrak{k}_{\la_1}=\mathfrak{k}_{\la_2}$
 and hence $G_{\la_1}=G_{\la_2}.$ 
 The claim follows from this facts.

 \subsection{Characters of semi-simple algebraic groups}
 \label{SS: ComputationSS}
 The following proposition, in combination with \cite{Bekka} and    Corollary~\ref{Cor-CharProduct}, shows how the  characters of  the groups $L_\la$ appearing in Theorem~\ref{Theo-GenAlgGroup} can be described.
   
A group $L$ is the  \textbf{almost direct product} of subgroups  $H_1, \dots, H_n$ of $L$  if the product map $H_1\times\cdots\times H_n\to  L$ 
 is a surjective homomorphism with  finite kernel.
 
  \begin{proposition}
  \label{Prop-CharProduct2}
  Let $\GG$ be a connected semi-simple algebraic group defined over a field $k$.
 Assume that  $\GG(k)$ is generated by its unipotent one-parameter subgroups.
  Let $\LL$ be a (non necessarily connected) algebraic normal $k$-subgroup of $\GG.$
   Then there exist    connected almost $k$-simple   normal   $k$-subgroups $\GG_1, \dots, \GG_r$ of $\GG$, a  subgroup $F$ of $\LL(k)$
 contained in the (finite)  center of  $\GG$, and 
    a connected normal $k$-subgroup $\HH$ of $\GG$ with the following properties:
    \begin{itemize}
    \item[(i)]  $\GG(k)$ is the almost direct product of $\LL(k)$ and $\HH(k)$;
   \item[(ii)]  $L(\kk)$ is the almost direct product of  $F, \GG_1(k), \dots, \GG_r(k)$; 
  \item[(iii)]  every $\GG_i(k)$ is  generated by its unipotent one parameter subgroups. 
  \end{itemize}
  \end{proposition}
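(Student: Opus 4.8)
The plan is to reduce the statement to the decomposition of $\GG$ into its almost‑$k$‑simple factors and then to transport that decomposition to the groups of $k$‑points, using the hypothesis that $\GG(k)$ is generated by its unipotent one‑parameter subgroups, i.e. that $\GG(k)=\GG(k)^{+}$ in the notation of \cite[\S6]{Borel-Tits2}. Since $\GG$ is connected semi‑simple, it is the almost direct product $\GG=\GG_1\cdots\GG_m$ of its finitely many minimal non‑trivial connected normal $k$‑subgroups; these are exactly the almost‑$k$‑simple connected normal $k$‑subgroups and distinct ones commute elementwise. The identity component $\LL^{\circ}$ of $\LL$ is a connected normal $k$‑subgroup, hence is the almost direct product of a sub‑family; after renumbering we may assume $\LL^{\circ}=\GG_1\cdots\GG_r$, and I put $\HH:=\GG_{r+1}\cdots\GG_m$, a connected normal $k$‑subgroup, so that $\GG=\LL^{\circ}\HH$ with $\LL^{\circ}\cap\HH$ finite and central.

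Next I would handle the possible disconnectedness of $\LL$. The connected group $\GG$ acts on the finite set $\pi_0(\LL)=\LL/\LL^{\circ}$ and therefore acts trivially, so $[\GG,\LL]\subseteq\LL^{\circ}$. For $h\in\HH$ and $\ell\in\LL$ one then has $[h,\ell]\in\HH$ (as $\HH$ is normal in $\GG$) and $[h,\ell]\in\LL^{\circ}$, so $[h,\ell]$ lies in the finite group $\HH\cap\LL^{\circ}$; since $h\mapsto[h,\ell]$ is a morphism from the connected group $\HH$ into a finite group it is trivial, i.e. $\HH$ centralises $\LL$. Hence $\LL\cap\HH$ centralises $\langle\LL,\HH\rangle\supseteq\LL^{\circ}\HH=\GG$, so $\LL\cap\HH\subseteq Z(\GG)$ is finite, while $\LL\HH\supseteq\LL^{\circ}\HH=\GG$; thus $\GG$ is the almost direct product of $\LL$ and $\HH$.

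Passing to $k$‑points is where the hypothesis enters, and it is the heart of the matter. By the theorem of Borel and Tits, every unipotent one‑parameter $k$‑subgroup of $\GG$ is contained in the unipotent radical of some parabolic $k$‑subgroup $\PP$, and for semi‑simple $\GG$ this unipotent radical is the internal direct product of its intersections with the $\GG_j$, each of which is a unipotent $k$‑subgroup of $\GG_j$; hence every unipotent one‑parameter subgroup of $\GG(k)$ factors as a product over the $\GG_j$, so $\GG(k)^{+}=\GG_1(k)^{+}\cdots\GG_m(k)^{+}$, which by hypothesis equals $\GG(k)$. Setting $L_0:=\GG_1(k)^{+}\cdots\GG_r(k)^{+}\subseteq\LL(k)$, $H_0:=\GG_{r+1}(k)^{+}\cdots\GG_m(k)^{+}\subseteq\HH(k)$ and $F:=\LL(k)\cap Z(\GG)$, and using that distinct sub‑products of the $\GG_j$ meet in finite central subgroups, one obtains in turn: $\GG(k)=L_0H_0$ with $L_0\cap H_0$ finite; $\LL(k)=L_0\cdot(\LL(k)\cap H_0)$ with $\LL(k)\cap H_0\subseteq F$, whence $\LL(k)=F\,L_0$; since $F$ contains each of the finite central groups $\GG_i\cap\prod_{j\neq i}\GG_j$ (these being contained in $\LL(k)$), also $\LL(k)=F\,\GG_1(k)\cdots\GG_r(k)$; and finally $\GG(k)=\LL(k)H_0=\LL(k)\HH(k)$. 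Together with the finiteness of $\LL(k)\cap\HH(k)$ this establishes (i), and the last factorisation of $\LL(k)$ is (ii).

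It remains to prove (iii), that each factor satisfies $\GG_i(k)=\GG_i(k)^{+}$, and I expect this to be the main obstacle. The previous step already yields $\GG_i(k)=\GG_i(k)^{+}D_i(k)$ with $D_i:=\GG_i\cap\prod_{j\neq i}\GG_j$ finite and central, so what is needed is the inclusion $D_i(k)\subseteq\GG_i(k)^{+}$. I would isolate this as a lemma: an element of $D_i(k)$ lying outside $\GG_i(k)^{+}$ would, via the description $\GG(k)^{+}=\prod_j\GG_j(k)^{+}$ and a chase along the central isogeny $\GG_1\times\cdots\times\GG_m\to\GG$, obstruct the assumed equality $\GG(k)=\GG(k)^{+}$; over a number field one may alternatively invoke the Kneser–Tits property for the isotropic simply connected covers, noting that the hypothesis forces each $\GG_j$ to be $k$‑isotropic (cf. Remark~\ref{Rem-Theo1}(iii)). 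Once (iii) is in place the proposition follows; in particular, combining (ii) with Corollary~\ref{Cor-CharProduct} and the computation of $\Char$ for quasi‑simple groups in \cite{Bekka} gives the description of $\Char(L_\la)$ announced before the statement.
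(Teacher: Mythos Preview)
Your proposal is correct and follows essentially the same route as the paper's proof: decompose $\GG$ into its almost-$k$-simple factors, take $\HH$ to be the product of those not contained in $\LL^\circ$, show that $\LL\cap\HH$ is finite and central, and then pass to $k$-points using the hypothesis $\GG(k)=\GG(k)^+$. The paper is more economical in two places. First, it deduces that $\LL\cap\HH$ lies in $Z(\GG)$ directly from the fact that it is a \emph{finite normal} subgroup of the \emph{connected} group $\GG$, rather than proving your stronger statement that $\HH$ centralises all of $\LL$. Second, for the entire passage to $k$-points---including the assertion that each $\GG_i(k)$ is itself generated by unipotent one-parameter subgroups---the paper simply invokes \cite[Proposition~6.2]{Borel-Tits2}, whereas you reconstruct the factorisation $\GG(k)^+=\prod_j\GG_j(k)^+$ by hand and rightly isolate the inclusion $D_i(k)\subseteq\GG_i(k)^+$ as the residual point; your first suggested ``chase'' for this is vague as written (an element of $D_i(k)$ may well lie in $\prod_{j\neq i}\GG_j(k)^+$ without visibly obstructing $\GG(k)=\GG(k)^+$), but the Kneser--Tits alternative you mention is sound over number fields. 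Your choice $F=\LL(k)\cap Z(\GG)$ is possibly larger than the paper's $F=\LL(k)\cap\HH(k)$, but either suffices for the statement.
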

  \begin{proof}
  Let $\LL_0$ be the connected component of $\LL.$
  Let  $\GG_1, \dots, \GG_r$  be the connected almost $k$-simple   normal   $k$-subgroups of $\GG$ contained in $\LL_0.$ Then
    $\LL_0$ is the almost direct product of the $\GG_i$'s and there exists
    a  connected normal $k$-subgroup $\HH$ of $\GG$ such that 
    $\GG$ is the almost direct product of $\LL_0$ and $\HH$
    (see \cite[2.15]{Borel-Tits1}). 
    It follows that $\LL$ is the almost direct product of $\LL_0$ and $\LL\cap \HH$
    and hence that $\LL\cap \HH$ is finite, since $\LL_0$ has finite index in $\LL.$
   This implies that  $\LL\cap \HH$ is contained in the center of $\GG,$ as
   $\LL\cap \HH$ is a normal subgroup of $\GG.$
   
  Since $\GG(k)$ is generated by its unipotent one parameter subgroups,
  the same is true for $\GG_i(k)$ for every $i$, for $\LL_0(k)$, 
  and for $\HH(k).$ Hence, 
  $\LL_0(k)$  is the almost direct product of   $\GG_1(k), \dots, \GG_r(k),$ 
 and  $\GG(k)$ is the almost direct product of $\LL_0(k)$ and $\HH(k)$
  (see \cite[Proposition 6.2]{Borel-Tits2}). 
  It  follows that
  $\LL(k)$ is the almost direct product of $\LL_0(k)$ and 
  $F:=\LL(k)\cap \HH(k)$. 
 From what we have seen above, $F\subset \LL\cap \HH$ is a subgroup of the center of 
 $\GG.$
  \end{proof}

 \section{A few examples}
  \label{S:Examples}
 \subsection{Abelian unipotent radical}
 \label{SS: AbelianRadical}
 Let $\LL$ be a  quasi-simple algebraic group  defined 
 over $\QQ.$ Assume that $L=\LL(\QQ)$ is generated by its
 unipotent one-parameter subgroups.
 Let $\LL\to GL(V)$ be a finite dimensional representation defined over $\QQ$
 of dimension at least 2; assume that $L\to GL(V(\QQ))$ is irreducible.
 Set $G=L\ltimes V(\QQ).$ Then $G$ is the group of
$\QQ$-rational points of the algebraic group $\LL\ltimes V$.
Denote by $F$ the kernel  of $L\to GL(V(\QQ))$
and observe that $F$ is a subgroup of the finite center of $\LL.$ We claim that
$$\Char(G)= \Char(L) \cup \{\widetilde{\chi} \mid \chi \in \Char(L)\}.$$
Indeed, let $\la\in \widehat{V(\QQ)}.$ The sets 
$K_\la$ and $P_\la$ as in Theorem~\ref{Theo-GenAlgGroup}
are $L$-invariant linear subspaces of $V(\QQ)$ and so
are equal either to $V(\QQ)$ or to $\{0\},$
by irreducibility of the representation of $L$ on $V(\QQ).$

\begin{itemize}
\item Assume that $K_\la= V(\QQ);$ then $\la=  \Un_{V(\QQ)}$ and 
$L_\la= L$. So, the characters of $G$ associated to $\la$ are
the characters of $L$ lifted to $G.$
\item  Assume that $K_\la=\{0\}$. Then $P_\la=\{0\}$ (see Proposition~\ref{Pro-KCenter}).
So,  $L_\la=F$ and every element of $\Char(G)$ associated to $\la$
is of the form $\widetilde{\chi}$ for some $\chi \in \Char(F).$
\end{itemize}

For instance, for every faithful $\QQ$-irreducible rational representation $\mathbf{SL}_2 \to GL(V),$ we have
$$
\Char(SL_2(\QQ)\ltimes V(\QQ))= \{\Un_G, \eps, \delta_e\},
$$
where $\eps$ is defined by 
$\eps(I, v)=1, \, \eps(-I, v)=-1,$ and $\eps(g, v)=0$ otherwise.

\subsection{The Heisenberg group as unipotent radical}
\label{SS: Heisenberg}
For an integer $n\geq 1$, consider the symplectic form $\beta$ on $\CC^{2n}$ given by
$$\beta((x,y),(x',y'))= (x,y)^t J (x',y')\tout (x,y),(x',y')\in \CC^{2n},$$
where $J$ is the $(2n\times 2n)$-matrix 
\[
J=\left(
\begin{array}{cc}
 0&I\\
-I_{n}& 0
\end{array}\
\right).
\]
The symplectic group 
$$
Sp_{2n}= \{g\in GL_{2n}(\CC)\mid {^{t}g}Jg=J\}
$$
is an algebraic which is a quasi-simple and defined over $\QQ.$

The $(2n+1)$-dimensional Heisenberg groups is the unipotent algebraic
 group $H_{2n+1}$ defined over $\QQ,$ with underlying set $\CC^{2n}\times \CC$ and product
$$((x,y),s)((x',y'),t)=\left((x+x',y+y'),s + t + \dfrac{1}{2}\beta((x,y),(x',y'))\right),
$$
for $(x,y), (x',y')\in \CC^{2n},\, s,t\in \CC.$ 

The group $Sp_{2n}$  acts by rational automorphisms of $H_{2n+1},$ given by 
$$
 g((x,y),t)= (g(x,y),t) \tout g\in Sp_{2n}(\QQ),\ (x,y)\in \QQ^{2n},\ t\in \QQ.
$$
Let 
$$G= Sp_{2n}(\QQ)\ltimes H_{2n+1}(\QQ)$$ 
be  the group of  $\QQ$-points of the algebraic group $Sp_{2n} \ltimes H_{2n+1}$
defined over $\QQ.$ Since $Sp_{2n}$ is $\QQ$-split,
$G$ is generated by its unipotent one-parameter subgroups.
We claim that 
$$
\Char(G)= \{\Un_G, \Un_{H_{2n+1}(\QQ)}, \eps\} \cup \{ \widetilde{\chi}\mid \chi\in \widehat{Z}\}
$$
where   $Z=\{((0,0),s)\ :\ s\in \QQ\}$ is the center of $H_{2n+1}(\QQ)$
and   $\eps$ is the character of $G$ defined by 
$$\eps(I, h)=1, \, \eps(-I, h)=-1, \, \text{ and } \, \eps(g, h)=0$$ 
for $g\in Sp_{2n}(\QQ)\setminus \{\pm I \}$ and $h\in H_{2n+1}(\QQ).$
 
Indeed, the Lie algebra of $H_{2n+1}(\QQ)$ is the $2n+1$-dimensional nilpotent Lie algebra $\mathfrak{h}$ over $\QQ$ with underlying set $\QQ^{2n}\times \QQ$ and 
Lie bracket
$$
[(x,y),s), ((x',y'),t] =\left(0, \beta((x,y),(x',y'))\right),
$$
for $(x,y), (x',y')\in \QQ^{2n},\, s,t\in \QQ.$
The action of $Sp_{2n}(\QQ)$ on $\mathfrak{h}$ is given by the same formula as for
the action on $H_{2n+1}(\QQ)$.

The $Sp_{2n}(\QQ)$-invariant ideals  $\mathfrak{k}$ of  $\mathfrak{h}$  are $\{0\},\mathfrak{h},$ and the center  $\mathfrak{z}$ of  $\mathfrak{h}.$
The corresponding ideals $\mathfrak{p}$, which are  inverse images in $\mathfrak{h}$
of the $G$-fixed elements in $\mathfrak{h}/\mathfrak{k}$, are respectively
$\mathfrak{z},\mathfrak{h}$ and $\mathfrak{z}.$

Let $\la\in \widehat{\mathfrak{h}}.$ 
\begin{itemize}
\item  Assume that $\mathfrak{k}_\la=\{0\}$. 
Then
$\mathfrak{p}_\la=\mathfrak{z}.$ Since 
 no element in $Sp_{2n}(\QQ)\setminus\{e\}$ acts trivially on  $\mathfrak{h}/ \mathfrak{z}\cong \QQ^{2n}$, we have $L_\la=\{e\}$. So, the only character of $G$ associated to $\la$  is $\widetilde{\chi_\la}.$ (Observe that $\widetilde{\chi_\la}\neq \Un_Z,$ since $\mathfrak{k}_\la=\{0\}.$) 

\item Assume that $\mathfrak{k}_\la= \mathfrak{h};$ then $\la=\Un_{\mathfrak{h}}$ and
$L_\la=Sp_{2n}(\QQ) $. So, the characters of $G$ associated to $\la$ are
the characters of $Sp_{2n}(\QQ)$ lifted to $G,$ 
that is, $\Un_G$, $\Un_{H_{2n+1}(\QQ)},$ and $\eps.$
 \item  Assume that $\mathfrak{k}_\la=\mathfrak{z}$. Then  $\mathfrak{p}_\la=\mathfrak{z}$ and $L_\la=\{e\}.$ So, $\Un_Z$ is the only  character 
 of $G$ associated to $\la$.
\end{itemize}

\subsection{Free nilpotent groups as unipotent radical}
Let ${\mathfrak u}={\mathfrak u}_{n,2} $ be  the free 2-step nilpotent Lie algebra on $n\geq 2$ generators
over $\QQ$;  as is well-known (see \cite{Gauger}), ${\mathfrak u}$   
 can be realized as follows. 
 Let $V$ be an $n$-dimensional vector space over $\QQ$ and set 
$ {\mathfrak u}:=  V\otimes \wedge^2 V,$ where $\wedge^2 V$ is the second exterior power 
of $V.$ The  Lie bracket  on  ${\mathfrak u}= V\otimes \wedge^2 V$ is defined by 
 $$
[(v_1, w_1) , (v_2, w_2)] = (0,  v_1\wedge v_2) \tous v_1, v_2 \in V, w_1, w_2\in \wedge^2 V.
$$
 The center of $\mathfrak u$ is $ \wedge^2 V$  and the associated unipotent  group 
 $U$ is $V \oplus \wedge^2 V$ with the product
 $$
 (v_1,w_1)(v_2,w_2)= (v_1+v_2, w_1+w_2+\frac{1}{2} v_1\wedge v_2) $$
 so that the exponential mapping $\exp: {\mathfrak u}\to U$ is the identity.
    The group $GL_n$ acts naturally on $V$ as well as on $\wedge^2V$
 and these actions induce  an action of $GL_n$ by automorphisms on $U$ given by 
  $$
  g(v, w)= (gv, gw) \tout g\in GL_n(\QQ),  v\in V, w\in \wedge^2V.
  $$
  
 Since $U$ coincides with the Heisenberg group $H_{3}(\QQ)$
 when $n=2,$ we may assume that $n\geq 3.$ 
  Let $L$ be  the group of $\QQ$-points  of an algebraic subgroup
  of $GL_{n}$ defined and quasi-simple over $\QQ.$
  Assume that the representations of $L$ on $V$ and on $\wedge^2 V$ are faithful and irreducible over $\QQ$
  and that, moreover,  $L$ is generated by its unipotent one-parameter subgroups
  (an example of such a group is $L=SL_n(\QQ)$).
  The group $G= L\ltimes U$ is  the group of  $\QQ$-points of an algebraic group
defined over $\QQ$ and $G$ is generated by its unipotent one-parameter subgroups.

We claim that 
$$
\Char(G)= \{\Un_G\} \cup \{\widetilde{\chi}\circ p\mid \chi\in \widehat{Z(L)}\} \cup \{\delta_e\} \cup \{\Un_{\wedge^2V}\},
$$
where $p: G\to L$ is the canonical epimorphism and $Z(L)$ the center of $L.$

Indeed, the $L$-invariant ideals  $\mathfrak{k}$ of  $\mathfrak{u}$  are 
$\{0\},\mathfrak{u},$ and the center  $\mathfrak{z}=\wedge^2V.$
By irreducibility of 
the $L$-action on $L$ on $V$ and $\wedge^2V$,  the corresponding ideals $\mathfrak{p}$, which are  inverse images in $\mathfrak{u}$
of the $G$-fixed elements in $\mathfrak{u}/\mathfrak{k}$, are respectively
$\mathfrak{0},\mathfrak{u}$ and $\mathfrak{z}.$

Let $\la\in \widehat{\mathfrak{u}}.$ 
\begin{itemize}
\item  Assume that $\mathfrak{k}_\la=\{0\}$. 
Then
$\mathfrak{p}_\la=\{0\}$ and 
$L_\la=\{e\}$. So, the only character of $G$ associated to $\la$  is $\delta_e.$
\item Assume that $\mathfrak{k}_\la= \mathfrak{u}.$ 
Then the characters of $G$ associated to $\la$ are
the characters of $L$ lifted to $G,$ 
that is, $\{\Un_G\}$ and $\widetilde{\chi}\circ p$ for  $\chi\in \widehat{Z(L)}.$

 \item  Assume that $\mathfrak{k}_\la=\mathfrak{z}$. Then 
 $\mathfrak{p}_\la=\mathfrak{z}$ and 
$L_\la=\{e\}$. So, $\Un_{\wedge^2 V}$ is the only  character
 of $G$ associated to $\la$.
\end{itemize}

\end{document}